\numberwithin{equation}{section}
\theoremstyle{plain}
\newtheorem{theorem}{Theorem}[section]
\newtheorem{lemma}[theorem]{Lemma}
\newtheorem{proposition}[theorem]{Proposition}
\newtheorem{corollary}[theorem]{Corollary}
\theoremstyle{definition}
\newtheorem{definition}[theorem]{Definition}
\newtheorem{remark}[theorem]{Remark}
\DeclareMathOperator{\Coh}{Coh}
\DeclareMathOperator{\Rep}{Rep}
\newcommand{\Db}{\mathrm{D^b}}
\DeclareMathOperator{\G}{G}
\DeclareMathOperator{\Spec}{Spec}
\DeclareMathOperator{\Hom}{Hom}
\DeclareMathOperator{\Tor}{Tor}
\DeclareMathOperator{\Aut}{Aut}
\newcommand{\one}{\mathbf{1}}
\newcommand{\id}{\mathrm{id}}
\DeclareMathOperator{\Cone}{Cone}
\DeclareMathOperator{\Pic}{Pic}
\newcommand{\cO}{\mathcal{O}}
\newcommand{\cA}{\mathcal{A}}
\newcommand{\cB}{\mathcal{B}}
\newcommand{\cC}{\mathcal{C}}
\newcommand{\cF}{\mathcal{F}}
\newcommand{\cG}{\mathcal{G}}
\newcommand{\cH}{\mathcal{H}}
\newcommand{\cL}{\mathcal{L}}
\newcommand{\cR}{\mathcal{R}}
\newcommand{\cT}{\mathcal{T}}
\newcommand{\cU}{\mathcal{U}}
\newcommand{\rS}{\mathrm{S}}
\newcommand{\rL}{\mathrm{L}}
\newcommand{\rR}{\mathrm{R}}
\newcommand{\Tnsr}[1]{{#1}}
\newcommand{\bC}{\mathbf{k}}
\newcommand{\bZ}{\mathbf{Z}}
\newcommand{\bP}{\mathbf{P}}
\newcommand{\bmu}{\bm{\mu}}
\newcommand{\hbmu}{\widehat{\bmu}}
\newcommand{\sO}{\mathsf{O}}
\begin{document}

\title[Derived categories of cyclic covers and their branch divisors]{Derived categories of cyclic covers\\[1ex]and their branch divisors}

\author{Alexander Kuznetsov}
\address{{\sloppy
\parbox{0.9\textwidth}{
Steklov Mathematical Institute,
8 Gubkin str., Moscow 119991 Russia
\\[5pt]
The Poncelet Laboratory, Independent University of Moscow
\hfill\\[5pt]
National Research University Higher School of Economics, Russian Federation
}\bigskip}}
\email{akuznet@mi.ras.ru \medskip}

\author{Alexander Perry}
\address{Department of Mathematics, Harvard University, Cambridge, MA 02138, USA \medskip}
\email{aperry@math.harvard.edu}

\thanks{A.K. was partially supported by a subsidy granted to the HSE by 
the Government of the Russian Federation for the implementation of the Global 
Competitiveness Program and by RFBR grants 14-01-00416, 15-01-02164, 15-51-50045 and NSh-2998.2014.1.
A.P. was partially supported by NSF GRFP grant DGE1144152, 
and thanks the Laboratory of Algebraic Geometry SU-HSE for its hospitality in November 2013, 
when this work was initiated.}

\date{\today}

\begin{abstract}
Given a variety $Y$ with a rectangular Lefschetz decomposition of its
derived category, we consider a degree $n$ cyclic cover $X \to Y$ ramified over 
a divisor $Z \subset Y$. 
We construct semiorthogonal decompositions of $\Db(X)$ and $\Db(Z)$ with 
distinguished components $\cA_X$ and $\cA_Z$, and prove the equivariant category of $\cA_X$
(with respect to an action of the $n$-th roots of unity) admits a semiorthogonal decomposition
into $n-1$ copies of $\cA_Z$. As examples we consider quartic double solids,
Gushel--Mukai varieties, and cyclic cubic hypersurfaces.
\end{abstract}

\maketitle


\section{Introduction}

Let $Y$ be an algebraic variety with a line bundle $\cO_Y(1)$. 
Assume the bounded derived category of coherent sheaves $\Db(Y)$ is equipped
with a rectangular Lefschetz decomposition with respect to $\cO_Y(1)$. In other words, 
assume an admissible subcategory $\cB \subset \Db(Y)$ is given such that 
\begin{equation*}
\Db(Y) = \langle \cB, \cB(1), \dots, \cB(m-1) \rangle
\end{equation*}
is a semiorthogonal decomposition, where $\cB(t)$ stands for the image of $\cB$ under 
the autoequivalence $\cF \mapsto \cF \otimes \cO_Y(t)$ of $\Db(Y)$.

Choose positive integers $n$ and $d$ such that $nd \le m$. Let $f: X \to Y$ be a 
degree $n$ cyclic cover of $Y$, ramified over a Cartier divisor $Z$ in the linear 
system $|\cO_Y(nd)|$. Let $i: Z \hookrightarrow Y$ be the inclusion. 
Then the derived pullback functor $f^*:\Db(Y) \to \Db(X)$ is fully faithful on the 
subcategory $\cB(t)$ for any~$t$, and the same is true of $i^*: \Db(Y) \to \Db(Z)$
provided $nd < m$. Moreover, denoting by $\cB_X(t) \subset \Db(X)$ and 
$\cB_Z(t) \subset \Db(Z)$ the images of $\cB(t)$, there are semiorthogonal
decompositions
\begin{equation*}
\Db(X) = \langle \cA_X, \cB_X, \cB_X(1), \dots, \cB_X(m-(n-1)d-1) \rangle
\end{equation*}
and
\begin{equation*}
\Db(Z) = \langle \cA_Z, \cB_Z, \cB_Z(1), \dots, \cB_Z(m-nd-1) \rangle.
\end{equation*}
Here $\cA_X$ and $\cA_Z$ are defined as the right orthogonal categories to the copies
of $\cB$ appearing in the semiorthogonal decompositions. The goal of this paper is to 
demonstrate a relation between $\cA_X$ and $\cA_Z$.

Explicitly, we consider the action of the group of $n$-th roots of unity $\bmu_n$ on
$X$ by automorphisms over $Y$. This action preserves $\cA_X$ since it preserves 
each category $\cB_X(t)$ in the above semiorthogonal decomposition. We denote by 
$\cA_X^{\bmu_n}$ the category of $\bmu_n$-equivariant objects of~$\cA_X$. 
Our main result can be stated as follows. 

\begin{theorem}\label{main-theorem}
Let $Y$ be a variety with a rectangular Lefschetz decomposition of length $m$ with respect 
to a line bundle $\cO_Y(1)$.
Let $X \to Y$ be a degree $n$ cyclic cover of $Y$, ramified over a Cartier divisor $Z$ in 
the linear system $|\cO_Y(nd)|$, where $nd \leq m$. 
Then for $\cA_Z$ and $\cA_X$ defined as above,
there
are fully faithful functors $\Phi_k: \cA_Z \to \cA_{X}^{\bmu_n}$, $0 \leq k \leq n-2$, 
such that there is a semiorthogonal decomposition
\begin{equation*}
\cA_X^{\bmu_n} = \langle \Phi_0(\cA_Z), \Phi_1(\cA_Z), \dots, \Phi_{n-2}(\cA_Z) \rangle.
\end{equation*}
\end{theorem}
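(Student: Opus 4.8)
The plan is to produce two different semiorthogonal decompositions of the equivariant category $\Db_{\bmu_n}(X)$ and compare them by mutations. Let $j\colon R\hookrightarrow X$ be the ramification divisor; then $R\cong Z$, $\cO_X(R)\cong f^*\cO_Y(d)$ (canonically, up to an equivariant character), $f_*\cO_X\cong\bigoplus_{k=0}^{n-1}f^*\cO_Y(-kd)$ with the $k$-th summand of $\bmu_n$-weight $k$, and there is a $\bmu_n$-equivariant Koszul resolution $0\to\cO_X(-R)\to\cO_X\to j_*\cO_R\to 0$. From this I would first establish the ``geometric'' decomposition
\begin{equation*}
\Db_{\bmu_n}(X)=\langle\, f^*\Db(Y),\ \Psi_1\Db(Z),\ \dots,\ \Psi_{n-1}\Db(Z)\,\rangle,
\end{equation*}
where $f^*$ carries the trivial equivariant structure and each $\Psi_a$ is pushforward along $j$ after twisting by a power of $\cO_X(R)$ and a character of $\bmu_n$. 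Here full faithfulness of $f^*$ follows from $(f_*\cO_X)^{\bmu_n}=\cO_Y$; full faithfulness of each $\Psi_a$ and all the semiorthogonalities reduce, via the Koszul resolution, to the observation that the relevant cross-$\Hom$'s carry nonzero $\bmu_n$-weights and so vanish after taking invariants; and generation follows by resolving an arbitrary equivariant object by the Koszul complex and descending along $f$. (This is the cyclic-cover analogue of the semiorthogonal decomposition of a root stack: $\Db_{\bmu_n}(X)\simeq\Db([X/\bmu_n])$, and $[X/\bmu_n]$ is the $n$-th root stack of $Y$ along $Z$.)

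Next I would refine both this decomposition and the Lefschetz one. Using $\cO_X(t)=f^*\cO_Y(t)$ and $\cO_Z(t)=j^*\cO_X(t)$, the functor $f^*$ sends the rectangular Lefschetz decomposition of $\Db(Y)$ to the semiorthogonal sequence $\langle\cB_X,\cB_X(1),\dots,\cB_X(m-1)\rangle$ of trivial character, while each $\Psi_a$, which intertwines the $\cO_Z(1)$- and $\cO_X(1)$-twists by the projection formula, sends the Lefschetz decomposition $\langle\cA_Z,\cB_Z,\dots,\cB_Z(m-nd-1)\rangle$ of $\Db(Z)$ to a semiorthogonal sequence $\langle\Psi_a\cA_Z,\Psi_a\cB_Z,\dots,\Psi_a\cB_Z(m-nd-1)\rangle$. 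Substituting into the geometric decomposition yields a long semiorthogonal decomposition of $\Db_{\bmu_n}(X)$ whose pieces are the $\cB_X(t)$ of trivial character ($0\le t\le m-1$), the $\Psi_a\cA_Z$, and the $\Psi_a\cB_Z(s)$. On the other side, the Lefschetz decomposition of $\Db(X)$ is itself $\bmu_n$-equivariant and gives
\begin{equation*}
\Db_{\bmu_n}(X)=\langle\,\cA_X^{\bmu_n},\ \cB_X(0)^{\bmu_n},\ \dots,\ \cB_X(m-(n-1)d-1)^{\bmu_n}\,\rangle,
\end{equation*}
with $\cB_X(t)^{\bmu_n}\cong\bigoplus_\chi\cB_X(t)_\chi$ (one summand per character $\chi$ of $\bmu_n$), so that $\cA_X^{\bmu_n}$ is the right orthogonal to the subcategory generated by all $\cB_X(t)_\chi$ with $0\le t\le m-(n-1)d-1$.

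Finally I would match the two. Tensoring the Koszul triangle with $f^*\cF$ for $\cF\in\cB$ and twisting appropriately shows that each piece $\Psi_a\cB_Z(s)$ is an extension of two $\cB_X$-type pieces whose characters differ by the (nonzero) character of the normal bundle $N_{R/X}$; as $a$ runs through $1,\dots,n-1$ these pairs of adjacent characters chain together, and, combined with the trivial-character pieces $\cB_X(t)$ ($0\le t\le m-1$), they can be mutated onto the full subcategory $\langle\cB_X(t)_\chi : 0\le t\le m-(n-1)d-1,\ \chi\rangle=(\cA_X^{\bmu_n})^\perp$. What survives these mutations are the images of the $\Psi_a\cA_Z$, which therefore assemble into a semiorthogonal decomposition of $\cA_X^{\bmu_n}$ into $n-1$ copies of $\cA_Z$; taking $\Phi_k$ ($0\le k\le n-2$) to be the corresponding mutated functors, which remain fully faithful, proves the theorem. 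I expect the main obstacle to be exactly this last step --- controlling the $\bmu_n$-characters and the powers of $\cO_Y(1)$ along the chain of mutations, and verifying that the $\cB$-type pieces generate precisely $(\cA_X^{\bmu_n})^\perp$ and nothing more. This is where the hypothesis $nd\le m$ is essential, via the identity $m-(n-1)d=(m-nd)+d$ reconciling the lengths of the Lefschetz collections on $X$ and on $Z$. (Establishing the geometric decomposition in the first step is also somewhat delicate, but of a familiar type.)
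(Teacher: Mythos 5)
Your proposal follows essentially the same route as the paper: you compare the cyclic-cover (root-stack type) decomposition $\Db(X)^{\bmu_n}=\langle f_0^*\Db(Y), j_{0*}\Db(Z),\dots,j_{n-2*}\Db(Z)\rangle$ with the equivariant Lefschetz decomposition $\Db(X)^{\bmu_n}=\langle \cA_X^{\bmu_n},\cB_X^{\bmu_n},\dots,\cB_X^{\bmu_n}(m-(n-1)d-1)\rangle$, and match them by mutations driven by the equivariant Koszul triangle $f_{k+1}^*\cF(-d)\to f_k^*\cF\to j_{k*}i^*\cF$, exactly the mechanism behind the paper's Lemmas~\ref{lemma-mutation-1} and~\ref{lemma-mutation-2}. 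The only remaining work is the character-and-twist bookkeeping you flag yourself, which the paper carries out in Steps~1--3 of Section~\ref{section-proof} and Lemma~\ref{lemma-simplify-C-k}, yielding the explicit functors $\Phi_k=\rL_{\cC_k}\circ j_{k*}\circ\Tnsr{\cO_Z(d)}$.
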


See Theorem~\ref{main-theorem-precise} for an explicit formula for the functor  
$\Phi_k$. 

\begin{remark}
\label{remark-stacks}
Theorem~\ref{main-theorem} should hold more generally (with the same proof) 
if $Y$ is a Deligne--Mumford stack (e.g.\ a weighted projective space). We have chosen not to work in this generality 
since for some of the results we need, the references only treat the case of varieties.
\end{remark}

If $n = 2$, the theorem gives an equivalence $\cA_X^{\bmu_2} \simeq \cA_Z$. 
By a result of Elagin~\cite{elagin2014equivariant}, we deduce in this case a ``dual'' equivalence.

\begin{corollary}\label{main-corollary}
If $n = 2$ in the setup of Theorem~\ref{main-theorem}, then 
there is an action of $\bZ/2$ on $\cA_Z$ such that $\cA_X \simeq \cA_Z^{\bZ/2}$.
\end{corollary}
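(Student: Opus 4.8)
The plan is to invoke Theorem~\ref{main-theorem} in the case $n=2$ together with Elagin's duality theorem for equivariant categories. When $n=2$ the theorem provides a single fully faithful functor $\Phi_0: \cA_Z \to \cA_X^{\bmu_2}$ which is moreover essentially surjective (the semiorthogonal decomposition has only one component), hence an equivalence $\cA_Z \simeq \cA_X^{\bmu_2}$. So it remains to ``invert'' the operation of passing to the $\bmu_2$-equivariant category: given that $\cA_Z$ is the equivariant category of $\cA_X$ for a $\bmu_2 = \bZ/2$-action, produce a $\bZ/2$-action on $\cA_Z$ whose equivariant category recovers $\cA_X$.

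First I would check that the ambient hypotheses of Elagin's theorem \cite{elagin2014equivariant} are met: the relevant statement there asserts that for a finite group $G$ acting on a suitable (e.g.\ $\bC$-linear, idempotent-complete, with $|G|$ invertible) triangulated category $\cT$ admitting a $G$-equivariant enhancement, there is a dual action of the Cartier dual $\widehat{G}$ on the equivariant category $\cT^G$ and a natural equivalence $(\cT^G)^{\widehat{G}} \simeq \cT$. Here $\bC$ is algebraically closed of characteristic zero (or at least $2$ is invertible and the relevant roots of unity are present), so $\bmu_2 \cong \widehat{\bmu_2} \cong \bZ/2$, and the category $\cA_X$ is an admissible subcategory of $\Db(X)$ and hence inherits a $\bmu_2$-equivariant dg-enhancement from that of $\Db(X)$. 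This gives a $\bZ/2$-action on $\cA_X^{\bmu_2}$ with $(\cA_X^{\bmu_2})^{\bZ/2} \simeq \cA_X$.

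Finally I would transport this $\bZ/2$-action along the equivalence $\Phi_0^{-1}: \cA_X^{\bmu_2} \xrightarrow{\ \sim\ } \cA_Z$ to obtain a $\bZ/2$-action on $\cA_Z$, and note that $\Phi_0$ then upgrades to an equivalence of equivariant categories, so that
\begin{equation*}
\cA_Z^{\bZ/2} \;\simeq\; \bigl(\cA_X^{\bmu_2}\bigr)^{\bZ/2} \;\simeq\; \cA_X,
\end{equation*}
which is the assertion. The only genuinely delicate point is the compatibility of enhancements: one must ensure that the $\bmu_2$-action used in Theorem~\ref{main-theorem} (by automorphisms of $X$ over $Y$, restricted to $\cA_X$) is realized by an action on an enhancement in the sense required by \cite{elagin2014equivariant}, so that ``the equivariant category'' is the same object in both statements. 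Since $\cA_X \subset \Db(X)$ is admissible and the $\bmu_2$-action comes from an honest action on the variety $X$, this is standard — the action lifts to the natural dg-enhancement of $\Db(X)$ and preserves the enhancement of the admissible subcategory $\cA_X$ — and I would simply cite the relevant functoriality. Everything else is formal manipulation of the resulting equivalences.
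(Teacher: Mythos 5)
Your proposal is correct and follows essentially the same route as the paper: the $n=2$ case of Theorem~\ref{main-theorem} gives the equivalence $\Phi_0:\cA_Z \xrightarrow{\sim} \cA_X^{\bmu_2}$, Elagin's reconstruction theorem (Theorem~\ref{theorem-elagin}, applied as in Corollary~\ref{proposition-elagin}) gives $\cA_X \simeq (\cA_X^{\bmu_2})^{\bZ/2}$ with $\bZ/2$ acting by tensoring with the nontrivial character, and transporting this action along $\Phi_0$ yields the involution $\tau = \Phi_0^{-1}\circ\Tnsr{\chi}\circ\Phi_0$ of Corollary~\ref{main-corollary-precise}. Your concern about equivariant enhancements is heavier than needed here, since the version of Elagin's theorem the paper invokes only requires an additive, idempotent-complete $\bC$-linear category with $|G|$ coprime to the characteristic, which $\cA_X$ satisfies as an admissible subcategory of $\Db(X)$.
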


In Proposition~\ref{proposition-tau}, we describe this $\bZ/2$-action explicitly in terms of 
a natural ``rotation functor'' associated to $\cA_Z$.

If $n>2$, there is still a description of $\cA_X$ in terms of $\cA_Z$, 
but it is more complicated. In this case, $\cA_X$ can be recovered as the 
$\bZ/n$-equivariant category of a ``gluing'' of $n-1$ copies of $\cA_Z$. 
For $n = 3$, we speculate about a way to make this reconstruction result 
more explicit, in terms of~$\cA_Z$ and its associated ``rotation functor.'' 

Components of semiorthogonal decompositions of algebraic varieties are in some cases related
to their Hodge structures
(see~\cite[Section~2]{addington-thomas} for the case of a cubic fourfold).
From this point of view, the above reconstruction results are surprising, 
since typically there is no direct description of the Hodge structure of a cyclic cover 
in terms of the Hodge structure of its branch divisor.

We emphasize that in our results we do not assume the varieties involved 
to be smooth. Note that even if $Y$ is smooth (as will often be true in applications), 
the cover $X$ and its branch divisor $Z$ can easily be singular. 
We also never assume the varieties involved to be proper, as all the results are local 
with respect to $Y$.

We apply our main theorem to three cases: 
quartic double solids,
Fano varieties of Picard number 1, degree 10,
and coindex 3 (so-called Gushel--Mukai varieties), and 
cyclic cubic hypersurfaces. 
In particular, we give interesting examples of 
K3 categories equipped with a group action, and 
describe their equivariant category. 

\subsection*{Organization of the paper} 
In Sections~\ref{section-triangulated-categories} and~\ref{section-group-actions}, 
we discuss background material on semiorthogonal decompositions and group 
actions on categories. 
In Section~\ref{section-cyclic-cover}, we give a 
semiorthogonal decomposition of the equivariant derived category of a cyclic cover. 
In Section~\ref{section-induced-sods}, we establish the semiorthogonal 
decompositions of $\Db(X)$ and $\Db(Z)$ mentioned above. 
In Section~\ref{section-proof}, we prove Theorem~\ref{main-theorem}.
In Section~\ref{section-reconstruction}, we introduce the rotation functors and 
prove the reconstruction results stated above.
Finally, in Section~\ref{section-applications} we apply our results to several 
examples.

\subsection*{Conventions} 
We work over an algebraically closed field $\bC$ of characteristic coprime to $n$. 
Varieties will be assumed integral and quasi-projective over $\bC$.

\subsection*{Acknowledgements}
A.K. is grateful to Alexey Elagin for his clarifications concerning equivariant categories.
A.P. thanks Joe Harris and Johan de Jong for useful conversations related to this work.


\section{Preliminaries on triangulated categories}
\label{section-triangulated-categories}
In this paper, triangulated categories are $\bC$-linear 
and functors between them are $\bC$-linear and exact.

\subsection{Derived categories of varieties}
We use the following notation:
For a variety $X$, we denote by $\Db(X)$ the bounded derived category of coherent 
sheaves on $X$, regarded as a triangulated category. We refer to $\Db(X)$ simply 
as the derived category of $X$. For a morphism of varieties $f: X \to Y$, we write  
$f_*: \Db(X) \to \Db(Y)$
for the derived pushforward (provided $f$ is proper), and 
$f^*: \Db(Y) \to \Db(X)$
for the derived pullback (provided $f$ has finite $\Tor$-dimension). 
For $\cF, \cG \in \Db(X)$, we write $\cF \otimes \cG \in \Db(X)$ for the derived tensor product.

\subsection{Semiorthogonal decompositions}
\label{subsection-sod-review}
We recall some well-known facts about semiorthogonal decompositions.
We suggest the reader consult~\cite{bondal} and~\cite{bondal-kapranov} for more details, 
or~\cite{kuznetsov2009hochschild} 
for a short review of results.

\begin{definition}
\label{definition-sod}
Let $\cT$ be a triangulated category. 
A \emph{semiorthogonal decomposition} 
\begin{equation*}
\cT = \langle \cA_1, \dots, \cA_n \rangle
\end{equation*}
is a sequence of full triangulated subcategories $\cA_1, \dots, \cA_n$ of $\cT$ 
--- called the \emph{components} of the decomposition --- such that: 
\begin{enumerate}
\item \label{definition-sod-1} 
$\Hom(\cF, \cG) = 0$ for all $\cF \in \cA_i, \cG \in \cA_j$ if $i>j$.
\item \label{definition-sod-2}
For any $\cF \in \cT$, there is a sequence of morphisms
\begin{equation*}
0 = \cF_n \to \cF_{n-1} \to \cdots \to \cF_1 \to \cF_0 = \cF,
\end{equation*}
such that $\Cone(\cF_i \to \cF_{i-1}) \in \cA_i$.
\end{enumerate}
If only condition~\eqref{definition-sod-1} is satisfied, 
we say the sequence $\cA_1, \dots, \cA_n$ is \emph{semiorthogonal}.
\end{definition}

\begin{remark}
\label{remark-projection-functors}
Condition~\eqref{definition-sod-1} of the definition implies the 
``filtration'' in~\eqref{definition-sod-2} and its ``factors'' are unique and functorial.
The functors 
\begin{equation*}
\cT \to \cA_i, \quad \cF \mapsto \Cone(\cF_{i} \to \cF_{i-1})
\end{equation*}
are called the \emph{projection functors} of the semiorthogonal decomposition. 
We call the object $\Cone(\cF_i \to \cF_{i-1})$ the \emph{component} of $\cF$ in 
$\cA_i$.
\end{remark}

A full triangulated subcategory $\cA \subset \cT$ is called \emph{right admissible} if the 
embedding functor ${\alpha:\cA \to \cT}$ has a right adjoint $\alpha^!:\cT \to \cA$, 
\emph{left admissible} if $\alpha$ has a left adjoint $\alpha^*:\cT \to \cA$, and 
\emph{admissible} if it is both right and left admissible.

If a semiorthogonal decomposition $\cT = \langle \cA,\cB \rangle$ is given,  
then $\cA$ is left admissible and $\cB$ is right admissible.
Vice versa, if $\cA \subset \cT$ is right admissible then there is a semiorthogonal 
decomposition 
\begin{equation}
\label{sod-right-admissible}
\cT = \langle \cA^\perp, \cA \rangle,
\end{equation} 
and if $\cA$ is left admissible then there is a semiorthogonal decomposition 
\begin{equation}
\label{sod-left-admissible}
\cT = \langle \cA, {}^\perp\cA \rangle.
\end{equation}
Here $\cA^{\perp}$ and ${}^{\perp}\cA$ denote the \emph{right} and \emph{left orthogonal} 
categories to $\cA$, i.e. the full subcategories of $\cT$ given by
\begin{align*}
\cA^{\perp} & = \left \{ \cF \in \cT ~ | ~ \Hom(\cG, \cF) = 0 \text{ for all } \cG \in \cA \right \},  \\
{}^{\perp}\cA & =  \left \{ \cF \in \cT ~ | ~ \Hom(\cF, \cG) = 0 \text{ for all } \cG \in \cA \right \}.
\end{align*}

\subsection{Mutations}
\label{subsection-mutations}
If $\cA \subset \cT$ is right admissible, then for any object $\cF \in \cT$ there is a 
distinguished triangle
\begin{equation*}
\alpha\alpha^!(\cF) \to \cF \to \rL_\cA(\cF),
\end{equation*}
where $\rL_\cA(\cF)$ is defined as the cone of the counit morphism. 
The first and last vertices give the projections of $\cF$ to $\cA$ and $\cA^{\perp}$ 
with respect to~\eqref{sod-right-admissible}. 
Similarly, if $\cA \subset \cT$ is left 
admissible, there is a distinguished triangle 
\begin{equation*}
\rR_\cA(\cF) \to \cF \to \alpha\alpha^*(\cF),
\end{equation*}
with vertices the projections of $\cF$ to ${}^{\perp}\cA$ and $\cA$ with respect 
to~\eqref{sod-left-admissible}. Since the projections to $\cA^{\perp}$ and ${}^{\perp}\cA$ 
are functorial (Remark~\ref{remark-projection-functors}), the above prescriptions 
define functors 
\begin{equation*}
\rL_{\cA} : \cT \to \cT \qquad \text{and} \qquad
\rR_{\cA} : \cT \to \cT,
\end{equation*} 
called the~\emph{left} and \emph{right mutation functors} of $\cA \subset \cT$.

If $\cA \subset \cT$ is admissible, both functors $\rL_{\cA}$ and $\rR_{\cA}$ are defined. 
In this case, they both annihilate $\cA$ and the restrictions
\begin{equation*}
\rL_{\cA}|_{{}^{\perp}\cA} : {}^{\perp}\cA \to \cA^{\perp}
\qquad \text{and} \qquad
\rR_{\cA}|_{\cA^{\perp}}  : \cA^{\perp} \to {}^{\perp}\cA
\end{equation*}
are mutually inverse equivalences (\cite[Lemma~1.9]{bondal-kapranov}). 
Furthermore, if $\cA_1, \dots, \cA_n$ is a semiorthogonal sequence of admissible subcategories 
of $\cT$, then $\langle \cA_1, \dots, \cA_n \rangle \subset \cT$ is admissible and
\begin{align}
\label{left-mutation-composition}
\rL_{\langle \cA_1, \dots, \cA_n \rangle} & \cong \rL_{\cA_1} \circ \rL_{\cA_2}
\circ \cdots \circ \rL_{\cA_n}, \\
\label{right-mutation-composition}
\rR_{\langle \cA_1, \dots, \cA_n \rangle} & \cong \rR_{\cA_n} \circ \rR_{\cA_{n-1}} 
\circ \cdots \circ \rR_{\cA_1}.
\end{align}

We are interested in mutation functors because they act on semiorthogonal decompositions. 
The next result follows easily from~\cite[Lemma 1.9]{bondal-kapranov}, where the case $n = 2$ is considered.

\begin{proposition}
\label{prop-mutations}
Let $\cT = \langle \cA_1,\dots,\cA_n \rangle$ be a semiorthogonal decomposition 
with admissible components. 
Then for $1 \leq i \leq n-1$ there is a semiorthogonal decomposition
\begin{equation*}
\cT  = \langle \cA_1, \dots, \cA_{i-1}, \rL_{\cA_i}(\cA_{i+1}), \cA_i, \cA_{i+2}, \dots, \cA_n \rangle,
\end{equation*}
and for $2 \leq i \leq n$ there is a semiorthogonal decomposition
\begin{equation*}
\cT  = \langle \cA_1, \dots, \cA_{i-2}, \cA_i, \rR_{\cA_i}(\cA_{i-1}), \cA_{i+1}, \dots, \cA_n \rangle.
\end{equation*}
\end{proposition}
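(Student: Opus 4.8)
The plan is to deduce the $n$-component statement from the known $n=2$ case in \cite[Lemma~1.9]{bondal-kapranov} by an inductive argument on the semiorthogonal sequence. I would treat the two assertions separately but symmetrically; let me describe the first one (left mutation at position $i$), the second being entirely analogous with $\rR$ in place of $\rL$.

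First I would record the reduction. Set $\cB = \langle \cA_1, \dots, \cA_{i-1} \rangle$ and $\cC = \langle \cA_{i+2}, \dots, \cA_n \rangle$, both admissible by the remarks preceding the proposition, and observe that by semiorthogonality of the full sequence we have a semiorthogonal decomposition
\begin{equation*}
\cT = \langle \cB, \cD, \cC \rangle, \qquad \cD := \langle \cA_i, \cA_{i+1} \rangle.
\end{equation*}
The point is that the claimed decomposition has the shape $\langle \cB, \cD', \cC \rangle$ with $\cD' = \langle \rL_{\cA_i}(\cA_{i+1}), \cA_i \rangle$, so it suffices to show two things: (a) inside the ambient category $\cD$ one has the semiorthogonal decomposition $\cD = \langle \rL_{\cA_i}(\cA_{i+1}), \cA_i\rangle$; and (b) replacing the middle block $\cD$ of a semiorthogonal decomposition $\langle \cB, \cD, \cC\rangle$ by any semiorthogonal decomposition of $\cD$ again yields a semiorthogonal decomposition of $\cT$. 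Claim (b) is a formal and standard gluing fact about semiorthogonal decompositions — one checks the Hom-vanishing conditions componentwise using that $\cB$ and $\cC$ are orthogonal to everything inside $\cD$ on the appropriate side, and one builds the filtration of Definition~\ref{definition-sod}\eqref{definition-sod-2} by first filtering with respect to $\langle\cB,\cD,\cC\rangle$ and then refining the $\cD$-factor. Claim (a) is exactly \cite[Lemma~1.9]{bondal-kapranov} applied to the two-term decomposition $\cD = \langle \cA_i, \cA_{i+1}\rangle$: the mutation $\rL_{\cA_i}$ computed inside $\cD$ sends $\cA_{i+1} = {}^\perp_{\cD}\cA_i$ equivalently onto $\cA_i^{\perp_{\cD}}$, giving $\cD = \langle \rL_{\cA_i}(\cA_{i+1}), \cA_i\rangle$.

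The one genuine subtlety — and the step I expect to be the main obstacle to a fully rigorous write-up — is that the left mutation functor $\rL_{\cA_i}$ appearing in the statement of the proposition is the one computed in the \emph{ambient} category $\cT$, whereas in step (a) I used the mutation computed inside $\cD$. So I would insert a lemma, or a remark, to the effect that for $\cF \in \cA_{i+1} \subset \cD \subset \cT$ the two cones agree: $\rL_{\cA_i}^{\cT}(\cF) \cong \rL_{\cA_i}^{\cD}(\cF)$. This holds because the counit $\alpha_i\alpha_i^!(\cF)\to\cF$ is intrinsic to $\cA_i$ and to $\cF$ and does not see whether we regard the problem in $\cD$ or in $\cT$ — concretely, the right adjoint $\alpha_i^!$ is computed the same way, since for $\cG\in\cA_i$ the groups $\Hom_\cD(\cG,\cF)$ and $\Hom_\cT(\cG,\cF)$ coincide, $\cD$ being a full subcategory; hence the defining triangle, and therefore its cone $\rL_{\cA_i}(\cF)$, is unchanged. (Alternatively one invokes \eqref{left-mutation-composition}: $\rL_{\cA_i}^{\cT}$ restricted to $\cC^\perp \cap {}^\perp\cB$, which contains $\cD$, agrees with $\rL_{\cA_i}^\cD$.) Once this compatibility is in place, combining (a), (b) and the compatibility lemma proves the first decomposition; applying the same argument to the opposite category $\cT^{\mathrm{op}}$, or simply dualizing every step, gives the second.
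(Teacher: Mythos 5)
Your proposal is correct and is essentially the argument the paper has in mind: the paper offers no written proof, merely noting that the statement ``follows easily'' from the $n=2$ case of \cite[Lemma~1.9]{bondal-kapranov}, and your reduction --- writing $\cT = \langle \cB, \cD, \cC\rangle$ with $\cD = \langle \cA_i,\cA_{i+1}\rangle$, applying the two-term case inside $\cD$, refining the middle block, and checking that the mutation computed in $\cD$ agrees with the one computed in $\cT$ --- is precisely that easy argument spelled out, with the fullness observation correctly handling the only real subtlety.
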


The following observation is useful for computing mutations. 
Let $\cA \subset \cT$ be an admissible subcategory. 
For any $\cF \in \cT$, to compute $\rL_{\cA}(\cF)$ it suffices to construct 
a distinguished triangle
\begin{equation}
\label{left-mutation-triangle}
\cG \to \cF \to \cG'
\end{equation}
with $\cG \in \cA$ and $\cG' \in \cA^{\perp}$; indeed, then $\rL_{\cA}(\cF) \cong \cG'$ 
by Remark~\ref{remark-projection-functors}. Similarly, if we 
construct a distinguished triangle
\begin{equation}
\label{right-mutation-triangle}
\cH' \to \cF \to \cH
\end{equation}
with $\cH \in \cA$ and $\cH' \in {}^{\perp}\cA$, then $\rR_{\cA}(\cF) \cong \cH'$. 
We call a triangle as in~\eqref{left-mutation-triangle} or~\eqref{right-mutation-triangle} 
a \emph{mutation triangle}. 
The following two lemmas can easily be proved using the description of 
mutation functors in terms of mutation triangles.

\begin{lemma}
\label{mutation-permutation}
Let $\cT = \langle \cA_1,\dots,\cA_n \rangle$ be a semiorthogonal decomposition 
with admissible components. 
Assume for some $i$ the components $\cA_i$ and $\cA_{i+1}$ are completely orthogonal, i.e.\
$\Hom(\cF,\cG) = \Hom(\cG,\cF) = 0$ for all $\cF \in \cA_i$, $\cG \in \cA_{i+1}$. 
Then $\rL_{\cA_i}(\cG) = \cG$ for any $\cG \in \cA_{i+1}$, 
and $\rR_{\cA_{i+1}}(\cF) = \cF$ for any 
$\cF \in \cA_i$. In particular, 
\begin{equation*}
\cT = \langle \cA_1,\dots,\cA_{i-1},\cA_{i+1},\cA_i,\cA_{i+2},\dots,\cA_n \rangle
\end{equation*}
is a semiorthogonal decomposition.
\end{lemma}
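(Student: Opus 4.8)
The plan is to deduce the lemma from the characterization of mutation functors in terms of mutation triangles recalled above, together with Proposition~\ref{prop-mutations}. The whole point is that under complete orthogonality the relevant mutations can be computed using \emph{trivial} distinguished triangles.

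First, to compute $\rL_{\cA_i}(\cG)$ for $\cG \in \cA_{i+1}$, I would exhibit a mutation triangle of the form~\eqref{left-mutation-triangle}. Complete orthogonality of $\cA_i$ and $\cA_{i+1}$ gives in particular $\Hom(\cF,\cG) = 0$ for all $\cF \in \cA_i$, that is, $\cG \in \cA_i^{\perp}$. Hence the trivial distinguished triangle $0 \to \cG \to \cG$ has its left term $0 \in \cA_i$ and its right term $\cG \in \cA_i^{\perp}$, so it is a mutation triangle and $\rL_{\cA_i}(\cG) \cong \cG$. Dually, for $\cF \in \cA_i$ complete orthogonality gives $\Hom(\cF,\cG) = 0$ for all $\cG \in \cA_{i+1}$, i.e.\ $\cF \in {}^{\perp}\cA_{i+1}$, and the trivial triangle $\cF \to \cF \to 0$ is a mutation triangle of the form~\eqref{right-mutation-triangle}, whence $\rR_{\cA_{i+1}}(\cF) \cong \cF$.

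For the final assertion, I would apply the first semiorthogonal decomposition of Proposition~\ref{prop-mutations} to obtain $\cT = \langle \cA_1, \dots, \cA_{i-1}, \rL_{\cA_i}(\cA_{i+1}), \cA_i, \cA_{i+2}, \dots, \cA_n \rangle$, and then observe that $\rL_{\cA_i}(\cA_{i+1}) = \cA_{i+1}$ as a subcategory of $\cT$ by the computation of the previous paragraph; equivalently, one may use the second decomposition of Proposition~\ref{prop-mutations} together with $\rR_{\cA_{i+1}}(\cA_i) = \cA_i$. There is no real obstacle in this argument: the only thing requiring care is checking that the hypothesis of complete orthogonality is precisely what makes the two trivial triangles above satisfy the orthogonality conditions appearing in~\eqref{left-mutation-triangle} and~\eqref{right-mutation-triangle}, after which the reordering of the decomposition follows formally from Proposition~\ref{prop-mutations}.
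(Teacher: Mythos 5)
Your proof is correct and is exactly the argument the paper has in mind: the paper omits the proof, remarking only that it "can easily be proved using the description of mutation functors in terms of mutation triangles," and your use of the trivial triangles $0 \to \cG \to \cG$ and $\cF \to \cF \to 0$ together with Proposition~\ref{prop-mutations} is precisely that argument.
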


\begin{lemma}
\label{lemma-mutation-autoequivalence}
Let $F: \cT_1 \to \cT_2$ be an equivalence of triangulated categories. 
Let $\cA \subset \cT_1$ be an admissible subcategory. Then 
$F \circ \rL_{\cA} \cong \rL_{F(\cA)} \circ F$ 
and
$F \circ \rR_{\cA} \cong \rR_{F(\cA)} \circ F$.
\end{lemma}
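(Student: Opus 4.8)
The plan is to verify that an equivalence intertwines mutation functors by checking that it sends mutation triangles to mutation triangles. Recall from the discussion preceding Lemma~\ref{mutation-permutation} that, for an admissible subcategory $\cA \subset \cT_1$ and an object $\cF \in \cT_1$, the left mutation $\rL_{\cA}(\cF)$ is characterized (up to canonical isomorphism) as the third vertex $\cG'$ of any distinguished triangle $\cG \to \cF \to \cG'$ with $\cG \in \cA$ and $\cG' \in \cA^{\perp}$; symmetrically $\rR_{\cA}(\cF) \cong \cH'$ for any triangle $\cH' \to \cF \to \cH$ with $\cH \in \cA$, $\cH' \in {}^{\perp}\cA$. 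So I will produce such triangles for $F(\cF)$ relative to $F(\cA) \subset \cT_2$.

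First I would observe that $F(\cA) \subset \cT_2$ is again admissible: since $F$ is an equivalence with quasi-inverse $F^{-1}$, the adjunctions $(\alpha,\alpha^!)$ and $(\alpha,\alpha^*)$ for $\cA \hookrightarrow \cT_1$ transport to adjunctions for $F(\cA) \hookrightarrow \cT_2$ (concretely, a right adjoint to the inclusion of $F(\cA)$ is $F \circ \alpha^! \circ F^{-1}$, and likewise on the left), so $\rL_{F(\cA)}$ and $\rR_{F(\cA)}$ are defined. Next, take the canonical mutation triangle in $\cT_1$,
\begin{equation*}
\alpha\alpha^!(\cF) \to \cF \to \rL_{\cA}(\cF),
\end{equation*}
and apply $F$. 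Since $F$ is exact, this yields a distinguished triangle
\begin{equation*}
F(\alpha\alpha^!(\cF)) \to F(\cF) \to F(\rL_{\cA}(\cF))
\end{equation*}
in $\cT_2$. The first vertex lies in $F(\cA)$ by construction. For the third vertex, I use that $F$, being an equivalence, induces isomorphisms on Hom-spaces, hence $F(\cA^{\perp}) = F(\cA)^{\perp}$; since $\rL_{\cA}(\cF) \in \cA^{\perp}$, we get $F(\rL_{\cA}(\cF)) \in F(\cA)^{\perp}$. By the characterization of left mutation recalled above, $F(\rL_{\cA}(\cF)) \cong \rL_{F(\cA)}(F(\cF))$. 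Naturality of this isomorphism in $\cF$ follows from the functoriality of the projection functors (Remark~\ref{remark-projection-functors}) together with the naturality of the counit $\alpha\alpha^! \to \id$; this gives the isomorphism of functors $F \circ \rL_{\cA} \cong \rL_{F(\cA)} \circ F$. The statement for $\rR_{\cA}$ is proved identically, using the triangle $\rR_{\cA}(\cF) \to \cF \to \alpha\alpha^*(\cF)$ and the identity $F({}^{\perp}\cA) = {}^{\perp}F(\cA)$.

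There is no serious obstacle here; the only point requiring a little care is the naturality of the resulting isomorphism of functors, as opposed to a mere objectwise isomorphism. This is handled by noting that every arrow in sight — the counit morphisms, hence the mutation triangles, hence their images under the exact functor $F$ — is natural in $\cF$, so the induced comparison $F \circ \rL_{\cA} \Rightarrow \rL_{F(\cA)} \circ F$ assembles into a natural transformation, which is an isomorphism because it is so objectwise.
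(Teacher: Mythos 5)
Your proof is correct and follows exactly the route the paper indicates: the paper does not spell out a proof but remarks that the lemma "can easily be proved using the description of mutation functors in terms of mutation triangles," and applying the exact equivalence $F$ to the canonical triangle $\alpha\alpha^!(\cF) \to \cF \to \rL_{\cA}(\cF)$ and invoking $F(\cA^{\perp}) = F(\cA)^{\perp}$ is precisely that argument. Your added care about naturality of the resulting isomorphism of functors is a welcome but unproblematic refinement.
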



\section{Group actions on triangulated categories}
\label{section-group-actions}
In this section we discuss (finite) group actions on categories.
After recalling the definition of the equivariant category of 
a group action on an arbitrary category, we focus on the 
triangulated case. In Section~\ref{subsection-triangulated-equivariant-categories}, 
we describe several situations where the equivariant category is naturally triangulated. 
We also state a result of Elagin, which gives a semiorthogonal decomposition of the 
equivariant derived category of a variety induced by a decomposition of the non-equivariant 
derived category. In Section~\ref{subsection-trivial-actions}, we give a semiorthogonal 
decomposition of the equivariant category of a trivial action. Finally, in 
Section~\ref{subsection-equivariant-categories-varieties} we summarize some
facts about the equivariant derived categories of varieties.

\subsection{Equivariant categories}
\label{subsection-equivariant-categories}
Suppose given a (right) action of a finite group $G$ on a category $\cC$. 
In other words, suppose given:
\begin{itemize}
\item For every $g \in G$, an autoequivalence $g^*: \cC \to \cC$.
\item For every $g, h \in G$, an isomorphism of functors 
$c_{g,h}: (gh)^* \xrightarrow{\sim} h^* \circ g^*$, such that the diagram
\begin{equation*}
\xymatrix{
(fgh)^* \ar[d]_{c_{f,gh}} \ar[rr]^{c_{fg,h}} & & h^* \circ (fg)^* \ar[d]^{h^*  c_{f,g}} \\
(gh)^* \circ f^* \ar[rr]^{c_{g,h} f^*} & & h^* \circ g^* \circ f^*
}
\end{equation*}
commutes for all $f,g,h \in G$.
\end{itemize}

\begin{definition}
A \emph{$G$-equivariant object} of $\cC$ is a pair $(\cF, \phi)$,
where $\cF$ is an object of $\cC$ and $\phi$ is a collection of isomorphisms 
$\phi_{g}: \cF \xrightarrow{\sim} g^*(\cF)$ for $g \in G$, such that the diagram
\[
\xymatrix{
\cF \ar[rr]^{\phi_{h}} \ar[drrrr]_{\phi_{gh}} & & h^*(\cF) \ar[rr]^{h^*(\phi_{g})} 
& & h^*(g^*(\cF)) \\
& &  & & (gh)^*(\cF) \ar[u]_{c_{g,h}(\cF)} 
}
\]
commutes for all $g, h \in G$. 
The equivariant structure $\phi$ is often omitted from the notation. 
\end{definition}

The \emph{equivariant category $\cC^G$} of $\cC$ is the category of 
$G$-equivariant objects of $\cC$, with the obvious morphisms. If $\cC$ is additive
and $G$ acts by additive autoequivalences, then $\cC^G$ is additive.

\subsection{Triangulated equivariant categories}
\label{subsection-triangulated-equivariant-categories}
Now assume a finite group $G$ acts on a triangulated category $\cT$ by exact autoequivalences. 
We will always assume in this situation that the order of $G$ is coprime to the characteristic 
of the base field $\bC$.
The category $\cT^G$ is additive and comes with a natural shift functor 
and a class of distinguished triangles. Namely, the shift functor on $\cT^G$ 
is given by $(\cF,\phi) \mapsto (\cF[1], \phi[1])$, 
and a triangle in $\cT^G$ is distinguished if and only if the underlying triangle 
in $\cT$ is distinguished. In general, $\cT$ being triangulated is not 
sufficient to guarantee this defines a triangulated structure on $\cT^G$ 
(see~\cite{elagin2011cohomological} for a more detailed discussion).
In case this \emph{does} define a triangulated structure, 
we will simply say ``$\cT^G$~is triangulated.''

The category  $\cT^G$ is triangulated in certain situations, 
typically when it can be identified with a category which is a priori triangulated. 
First, this holds if $\cT = \Db(X)$ for a variety~$X$ and 
$G$ acts via automorphisms of $X$. Indeed, in this case $\cT^G$ is equivalent to 
the bounded derived category of $\Coh(X)^G$ (see \cite[Theorem~9.6]{elagin2011cohomological}).
Second, $\cT^G$ is triangulated if $\cT$ is a semiorthogonal component of 
$\Db(X)$ and $G$ acts via automorphisms of $X$ that preserve $\cT$. 
In fact, in this case the following theorem shows $\cT^G$ is 
a semiorthogonal component of $\Db(X)^G$, hence in particular triangulated.

\begin{theorem}[\cite{elagin2012descent,elagin2014equivariant}]
\label{theorem-group-action-sod}
Let $X$ be a quasi-projective variety with an action of a finite group~$G$. Assume
$\Db(X) = \langle \cA_1, \dots, \cA_n \rangle$ is a semiorthogonal decomposition which 
is preserved by $G$, i.e. each $\cA_i$ is preserved by the action of $G$. 
Then there is a semiorthogonal decomposition
\begin{equation}
\label{equation-g-sod}
\Db(X)^{G} = \langle \cA_1^G, \dots, \cA_n^G \rangle.
\end{equation}
\end{theorem}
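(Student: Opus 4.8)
\textbf{Proof proposal for Theorem~\ref{theorem-group-action-sod}.}

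The plan is to verify the two axioms of a semiorthogonal decomposition directly for the sequence $\cA_1^G, \dots, \cA_n^G$ inside $\Db(X)^G$, using the forgetful functor $\mathsf{Forg}\colon \Db(X)^G \to \Db(X)$ together with its adjoints as the main tool. First I would recall that since $|G|$ is coprime to $\operatorname{char}\bC$, the forgetful functor has both a left and a right adjoint, the (averaged) induction/inflation functor $\operatorname{Ind}\colon \Db(X)\to\Db(X)^G$, and that $\operatorname{Forg}\circ\operatorname{Ind}(\cF)\cong\bigoplus_{g\in G} g^*(\cF)$; moreover for equivariant objects $(\cF,\phi),(\cG,\psi)$ one has the averaging formula $\Hom_{\Db(X)^G}((\cF,\phi),(\cG,\psi))$ computed as the $G$-invariants of $\Hom_{\Db(X)}(\cF,\cG)$ under the twisted action induced by $\phi$ and $\psi$. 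Each $\cA_i^G$ is a full triangulated subcategory of $\Db(X)^G$ (it is closed under shifts since the shift on $\cT^G$ lifts the shift on $\cT$ and $\cA_i$ is a triangulated subcategory, and it is closed under cones because a triangle in $\cT^G$ is distinguished iff its image in $\cT$ is, and $\cA_i$ is preserved by $G$ so the cone inherits an equivariant structure); this also uses that $\Db(X)^G$ is itself triangulated, which holds by the identification with $\Db(\Coh(X)^G)$ cited above.

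For semiorthogonality (axiom~\eqref{definition-sod-1}): given $(\cF,\phi)\in\cA_i^G$ and $(\cG,\psi)\in\cA_j^G$ with $i>j$, the averaging formula expresses $\Hom_{\Db(X)^G}((\cF,\phi),(\cG,\psi))$ as a subspace (the invariants) of $\Hom_{\Db(X)}(\cF,\cG)$, and the latter vanishes because $\cF\in\cA_i$, $\cG\in\cA_j$, $i>j$, and $\Db(X)=\langle\cA_1,\dots,\cA_n\rangle$ is semiorthogonal; hence the equivariant $\Hom$ vanishes. For the decomposition axiom~\eqref{definition-sod-2}: given $(\cF,\phi)\in\Db(X)^G$, I would apply the (functorial) non-equivariant filtration of $\cF$ with respect to $\langle\cA_1,\dots,\cA_n\rangle$. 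Because this filtration is canonical and functorial (Remark~\ref{remark-projection-functors}) and each $\cA_i$ is $G$-stable, the isomorphisms $\phi_g\colon\cF\xrightarrow{\sim}g^*(\cF)$ are compatible with the filtration stages and their cones, so each $\cF_i$ and each $\Cone(\cF_i\to\cF_{i-1})\in\cA_i$ acquires a canonical equivariant structure, and the cocycle condition for these induced structures follows from uniqueness of the filtration. This produces the required tower $0=(\cF_n,-)\to\cdots\to(\cF_0,-)=(\cF,\phi)$ in $\Db(X)^G$ with equivariant factors in $\cA_i^G$.

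The step I expect to be the main obstacle is making the functoriality/canonicity argument for the equivariant structures on the projection functors fully rigorous: one must check that the induced isomorphisms $\phi_g$ on the factors satisfy the coherence diagram defining an equivariant object, which amounts to showing that the projection functors $\cT\to\cA_i$ commute with the autoequivalences $g^*$ up to the coherence data $c_{g,h}$ in a compatible way. This is where one genuinely uses that $\cA_i$ is $G$-preserved (not merely that $\Db(X)$ is) together with the uniqueness clause of Remark~\ref{remark-projection-functors}; a clean way to package it is to observe that $g^*$ maps the filtration of $\cF$ to a filtration of $g^*\cF$ with factors in $g^*\cA_i=\cA_i$, so by uniqueness it \emph{is} the canonical filtration of $g^*\cF$, whence the comparison isomorphisms exist and the pentagon/cocycle identity is forced. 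The remaining verifications — that $\operatorname{Ind}$ is biadjoint to $\operatorname{Forg}$ and the averaging $\Hom$-formula — are standard consequences of the coprimality hypothesis and I would simply cite them.
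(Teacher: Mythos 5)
Your proof is correct, but it takes a genuinely different route from the paper: the paper's proof of Theorem~\ref{theorem-group-action-sod} is a two-line citation, taking the induced semiorthogonal decomposition of $\Db(X)^G$ from \cite[Theorem~6.3]{elagin2012descent} and the identification of its components with the categories $\cA_i^G$ from \cite[Proposition~3.10]{elagin2014equivariant}, whereas you verify the two axioms directly. Your semiorthogonality step is immediate (morphisms of equivariant objects form, by definition, the invariant subspace of the non-equivariant $\Hom$, which vanishes), and your decomposition step is the special case of Elagin's descent that is accessible by hand: since each $\cA_i$ is preserved individually and the filtration of Definition~\ref{definition-sod}\eqref{definition-sod-2} is unique and functorial (Remark~\ref{remark-projection-functors}), $g^*$ carries the canonical filtration of $\cF$ to the canonical filtration of $g^*\cF$, so each $\phi_g$ lifts uniquely to the filtration and its factors, and the cocycle identities are forced because both sides are the unique lift of the same morphism of underlying objects; you also correctly invoke that $\Db(X)^G$ is triangulated with distinguished triangles detected by the forgetful functor, which comes from the identification $\Db(X)^G \simeq \Db(\Coh(X)^G)$ of \cite{elagin2011cohomological}. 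What the paper's citation buys is brevity and the fact that Elagin's comonadic descent machinery handles the coherence and triangulatedness issues in much greater generality; what your route buys is a self-contained, elementary argument in the situation actually needed (finite $G$ of order invertible in $\bC$, each component preserved, not merely permuted), at the cost of writing out carefully the coherence check you flag as the main obstacle --- which does go through exactly as you sketch.
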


\begin{proof}
The equivariant category $\Db(X)^G$ comes with an induced semiorthogonal decomposition 
by~\cite[Theorem~6.3]{elagin2012descent}, and its components are equivalent
to the equivariant categories~$\cA_i^G$ by \cite[Proposition~3.10]{elagin2014equivariant}.
\end{proof}

\subsection{Trivial actions}
\label{subsection-trivial-actions}
We say the action of $G$ on a category $\cC$ is \emph{trivial} if for each $g \in G$
an isomorphism of functors $\tau_g: \id \xrightarrow{\sim} g^*$ is given, such that 
\begin{equation*}
c_{g,h} \circ \tau_{gh} = h^*\tau_g \circ \tau_h
\end{equation*}
for all $g, h \in G$.

\begin{proposition}
\label{proposition-trivial-action-sod}
Let $\cT$ be a triangulated category with a trivial action of a finite group~$G$.
If $\cT^G$ is also triangulated, then there is a completely orthogonal decomposition
\begin{equation}
\label{equation-trivial-action-sod}
\cT^G = \langle \cT \otimes V_0, \cT \otimes V_1, \dots, \cT \otimes V_n \rangle,
\end{equation}
where $V_0, \dots, V_n$ is a list of the finite-dimensional irreducible representations 
of $G$.
\end{proposition}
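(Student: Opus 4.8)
\textbf{Proof strategy for Proposition~\ref{proposition-trivial-action-sod}.}
The plan is to show that the equivariant category of a trivial action decomposes according to the irreducible representations of $G$, with the summand attached to an irreducible $V_i$ being equivalent to $\cT$ itself, and that distinct summands are completely orthogonal. First I would construct, for each finite-dimensional $G$-representation $V$, a functor $\cT \to \cT^G$ sending an object $\cF$ to $\cF \otimes V$, where the underlying object is $\cF^{\oplus \dim V}$ and the equivariant structure is induced by combining the trivializations $\tau_g : \id \xrightarrow{\sim} g^*$ with the $G$-action on $V$; the cocycle condition $c_{g,h} \circ \tau_{gh} = h^* \tau_g \circ \tau_h$ is exactly what is needed for this to define a genuine $G$-equivariant object (this is the routine verification I would not spell out in full). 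Write $\cT \otimes V$ for the essential image. Taking $V = V_0, \dots, V_n$ a complete list of irreducibles gives the claimed components.

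Next I would compute morphisms. For $\cF, \cG \in \cT$ and representations $V, W$, an equivariant morphism $\cF \otimes V \to \cG \otimes W$ is a morphism $\cF \otimes V \to \cG \otimes W$ in $\cT$ (i.e.\ an element of $\Hom_{\cT}(\cF,\cG) \otimes \Hom_\bC(V,W)$) that is compatible with the equivariant structures; since the action on $\cT$ is trivialized by the $\tau_g$, this compatibility forces the $\Hom_\bC(V,W)$-component to be $G$-equivariant, so
\begin{equation*}
\Hom_{\cT^G}(\cF \otimes V, \cG \otimes W) = \Hom_\cT(\cF, \cG) \otimes \Hom_\bC(V,W)^G = \Hom_\cT(\cF,\cG) \otimes \Hom_G(V,W).
\end{equation*}
By Schur's lemma $\Hom_G(V_i, V_j) = 0$ for $i \neq j$, which gives complete orthogonality of the components $\cT \otimes V_i$, and $\Hom_G(V_i,V_i) = \bC$ (here one uses that $\bC$ is algebraically closed), which shows $\cF \mapsto \cF \otimes V_i$ is fully faithful, hence an equivalence onto $\cT \otimes V_i$. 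Each $\cT \otimes V_i$ is thus triangulated (being equivalent to $\cT$), and it is a full triangulated subcategory of $\cT^G$ because the embedding is exact and, as we have assumed, $\cT^G$ is triangulated with distinguished triangles detected on underlying objects.

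It remains to check the generation axiom (Definition~\ref{definition-sod}\eqref{definition-sod-2}): every object of $\cT^G$ is built from the $\cT \otimes V_i$. For this I would use the standard averaging / induction argument. Given $(\cF, \phi) \in \cT^G$, the forgetful functor $\cT^G \to \cT$ has a right adjoint $\mathrm{Ind}$ (send $\cE$ to $\bigoplus_{g} g^*\cE$ with the regular equivariant structure; here finiteness of $G$ and coprimality of $|G|$ with the characteristic are what make this work), and the counit $\mathrm{Ind}(\cF) \to (\cF,\phi)$ is a split epimorphism because, after trivializing via $\tau$, it admits the section coming from the averaging idempotent $\tfrac{1}{|G|}\sum_g \phi_g^{-1} \circ \tau_g(\cF)$ — this is where coprimality is essential. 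Since $\mathrm{Ind}(\cF) \cong \cF \otimes \bC[G] \cong \bigoplus_i (\cF \otimes V_i)^{\oplus \dim V_i}$ lies in $\langle \cT \otimes V_0, \dots, \cT \otimes V_n\rangle$, and $(\cF,\phi)$ is a direct summand of it, the object $(\cF,\phi)$ also lies in this subcategory (a triangulated subcategory closed under the relevant direct sums is closed under direct summands, or one argues directly with the split triangle). This establishes the decomposition. The main obstacle is the generation step: verifying that the counit of the induction adjunction splits and hence that $\mathrm{Ind}$ is generating requires the characteristic hypothesis and a careful check that the averaging operator is a well-defined equivariant idempotent; the morphism computation and complete orthogonality are then formal consequences of Schur's lemma.
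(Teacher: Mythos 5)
Your construction of the functors $\cF \mapsto \cF \otimes V_k$, the computation $\Hom_{\cT^G}(\cF \otimes V, \cG \otimes W) \cong \Hom_{\cT}(\cF,\cG) \otimes \Hom_{\Rep(G)}(V,W)$, and the use of Schur's lemma to get full faithfulness and complete orthogonality are exactly the paper's argument; the only divergence is in the generation step. There the paper argues more directly: viewing an equivariant object $\cF$ (for a trivialized action) as an object with a $\bC[G]$-action and using $\bC[G] \cong \bigoplus_k V_k^\vee \otimes V_k$, it writes $\cF \cong \bigoplus_k (\cF \otimes_{\bC[G]} V_k^\vee) \otimes V_k$, so $\cF$ is literally a finite direct sum of objects of the components $\cT \otimes V_k$ and the filtration required by Definition~\ref{definition-sod}\eqref{definition-sod-2} is immediate; as a byproduct this identifies the projection functors $\pi_k$, which the paper uses afterwards. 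You instead realize $\cF$ as a direct summand of the induced object $\cF \otimes \bC[G]$ via a split (co)unit of the induction adjunction, and then need the triangulated subcategory generated by the $\cT \otimes V_k$ to contain this summand. That is the one point to patch: a triangulated subcategory closed under finite direct sums is not automatically closed under direct summands, and your parenthetical ``or one argues directly with the split triangle'' does not by itself settle it. It can be repaired --- by complete orthogonality the idempotent cutting $\cF$ out of $\bigoplus_k (\cF \otimes V_k)^{\oplus \dim V_k}$ is block-diagonal, and its blocks are precisely the isotypic projectors, which brings you back to the paper's direct decomposition (alternatively one may invoke idempotent completeness of $\cT$, which holds in all of the paper's applications) --- so your route is correct in substance but slightly longer, while the paper's isotypic decomposition avoids the thickness issue altogether.
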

\begin{proof}
Since the action of $G$ is trivial, a $G$-equivariant object of $\cT$
is the same as an object $\cF \in \cT$ with a representation $G \to \Aut(\cF)$.
In particular, for any $\cF \in \cT$ and $V \in \Rep(G)$ 
(the category of finite-dimensional representations of $G$), 
the tensor product $\cF \otimes V$ is a $G$-equivariant object of $\cT$.
Moreover, given $\cF, \cF' \in \cT$ and $V, V' \in \Rep(G)$, then
\begin{equation*}
\Hom_{\cT^G}(\cF \otimes V, \cF' \otimes V') \cong 
\Hom_{\cT}(\cF,\cF') \otimes \Hom_{\Rep(G)}(V,V').
\end{equation*}
Hence the functors $\cT \to \cT^G$ given by $\cF \mapsto \cF \otimes V_k$ 
are fully faithful, 
and the essential images $\cT \otimes V_k$ of these functors are completely orthogonal.
Finally, if $\cF \in \cT^G$ then
\begin{equation*}
\cF = \cF \otimes_{\bC[G]} \bC[G] =
\cF \otimes_{\bC[G]} \left( \oplus_{k=0}^n V_k^\vee\otimes V_k \right) =
\bigoplus_{k=0}^n (\cF \otimes_{\bC[G]}  V_k^\vee) \otimes V_k,
\end{equation*}
which proves the categories $\cT \otimes V_k$ generate $\cT^G$.
\end{proof}

We will apply Proposition~\ref{proposition-trivial-action-sod} in the following situation: 
$G$ acts on a variety $X$, $\cT$ is a semiorthogonal component of $\Db(X)$ preserved by 
$G$ (so that $\cT^G$ is triangulated by Theorem~\ref{theorem-group-action-sod}), 
and $G$ induces a trivial action on $\cT$.
In this situation, we define functors
\begin{align}
\label{iota_k} \iota_k & : \cT \to \cT^G, \quad \cF \mapsto \cF \otimes V_k, \\
\label{pi_k} \pi_k & : \cT^G \to \cT, \quad  \cF \mapsto \cF \otimes_{\bC[G]} V_k^\vee.
\end{align}
The proof of Proposition~\ref{proposition-trivial-action-sod}
shows the $\pi_k$ are the projection functors for the
semiorthogonal decomposition~\eqref{equation-trivial-action-sod}.  
Since this decomposition is completely orthogonal, the functors $\pi_k$ are 
simultaneously left and right adjoint to $\iota_k$, and we have 
\begin{equation*}
\pi_k \circ \iota_\ell = 
\begin{cases}
\id_{\cT} & \text{if $k = \ell$,}\\
0 & \text{if $k \neq \ell$.}
\end{cases}
\end{equation*}

\subsection{Equivariant derived categories of varieties}
\label{subsection-equivariant-categories-varieties}
The usual functor formalism for categories of sheaves extends directly 
to the equivariant setting. We summarize the relevant facts here, 
referring to~\cite{mckay-correspondence} for a more detailed exposition. 
Let $G$ be a finite group and $f: X \to Y$ a $G$-equivariant morphism of varieties 
with $G$-actions. Then if $f$ is proper there is a derived pushforward functor
\begin{equation*}
f_* : \Db(X)^G \to \Db(Y)^G,
\end{equation*}
and if $f$ is of finite $\Tor$-dimension there is a derived pullback functor
\begin{equation*}
f^*: \Db(Y)^G \to \Db(X)^G.
\end{equation*}
When both functors are defined, $f^*$ is left adjoint to $f_*$. Similarly, there is 
a derived tensor product for equivariant complexes $\cF, \cG \in \Db(X)^G$, 
which we denote by $\cF \otimes \cG$. 
These functors satisfy the same relations
as in the non-equivariant setting, e.g. the projection formula 
\begin{equation}\label{projection-formula}
f_*(\cF \otimes f^*\cG) \cong f_*(\cF) \otimes \cG
\end{equation} 
holds for any $\cF \in \Db(X)^G$ and $\cG \in \Db(Y)^G$. 
Moreover, there is an equivariant Grothendieck duality, 
providing a right adjoint $f^!: \Db(Y)^G \to \Db(X)^G$ to $f_*$ when $f$ is proper. 

Finally, assume $f: X \to Y$ is an inclusion of a Cartier divisor, defined by 
an invariant section of a $G$-equivariant line bundle $\cL$ on $Y$. 
In the non-equivariant situation, i.e. when $G$ is trivial, we have 
the standard formula
\begin{equation}
\label{shriek-divisor}
f^!(\cF) = f^*(\cF) \otimes f^*(\cL)[-1].
\end{equation}
Moreover, by~\cite[Lemma~3.3]{sod-bondal-orlov}, 
the canonical adjunction morphism $f^*(f_*\cF) \to \cF$ extends to 
the standard distinguished triangle
\begin{equation}
\label{equivariant-divisor-triangle}
\cF \otimes f^*(\cL)^{-1}[1] \to f^*(f_*\cF) \to \cF. 
\end{equation}
Both~\eqref{shriek-divisor} and~\eqref{equivariant-divisor-triangle} 
continue to hold when $G$ is nontrivial. 


\section{The equivariant derived category of a cyclic cover}
\label{section-cyclic-cover}

\subsection{Setup and notation}
Let $Y$ be an algebraic variety and $\cL$ a line bundle on $Y$. Suppose $Z$ is a Cartier 
divisor in $Y$ defined by a section of $\cL^{n}$. Let $f: X \to Y$
be the degree $n$ cyclic cover of $Y$ ramified over~$Z$, i.e.
\begin{equation*}
X = \Spec_Y(\cR_Y),
\qquad
\cR_Y := \cO_Y \oplus \cL^{-1} \oplus \dots \oplus \cL^{-(n-1)},
\end{equation*}
where the algebra structure on $\cR_Y$ is given by the 
map $\cL^{-n} \to \cO_Y$ corresponding to the
divisor $Z$. 

Let $\bmu_n$ denote the group of $n$-th roots of unity. 
Its dual group $\hbmu_{n}$ (the group of characters) is a cyclic group. 
We identify $\hbmu_{n}$ with $\bZ/n$ by choosing a primitive character 
$\chi:\bmu_n \to \bC^\times$ and associating $k \in \bZ/n$ to $\chi^k \in \hbmu_{n}$.
Thus all irreducible representations of $\bmu_n$ are given by
\begin{equation*}
V_0 = \one, \ V_1 = \chi, \ \dots, \ V_{n-1} = \chi^{n-1},
\end{equation*}
and are indexed by $\bZ/n$.

We equip $Y$ with the trivial $\bmu_n$-action. The group $\bmu_n$ acts on 
the sheaf $\cR_Y$ via the character $\chi^k$ on the summand $\cL^{-k}$, 
so that as an equivariant sheaf it can be written as
\begin{equation}\label{equation-ry}
\cR_Y = (\cO_Y \otimes \one) \oplus (\cL^{-1} \otimes \chi) \oplus \dots \oplus 
(\cL^{-(n-1)} \otimes \chi^{n-1}).
\end{equation} 
This induces an action of $\bmu_n$ on $X = \Spec_Y(\cR_Y)$.

The morphism $f: X \to Y$ is $\bmu_n$-equivariant with respect to the above actions.
Moreover, it is proper and flat. 
For each $k \in \bZ/n$, we define the functors 
$f_k^*$, $f_{k*}$, $f_k^!$ as the compositions
\begin{align*}
f_k^* & : \Db(Y) \xrightarrow{\ \iota_k\ } \Db(Y)^{\bmu_n} \xrightarrow{\ f^*\ } \Db(X)^{\bmu_n} , \\
f_{k*} & : \Db(X)^{\bmu_n} \xrightarrow{\ f_*\ } \Db(Y)^{\bmu_n} \xrightarrow{\ \pi_k\ }  \Db(Y), \\
f_k^! & : \Db(Y) \xrightarrow{\ \iota_k \ } \Db(Y)^{\bmu_n} \xrightarrow{\ f^! \ } \Db(X)^{\bmu_n},
\end{align*}
where $\iota_k$ and $\pi_k$ are given by~\eqref{iota_k} and~\eqref{pi_k}.
Then $f_{k}^*$ is the left adjoint of $f_{k*}$, and $f_k^!$ is the right adjoint of $f_{k*}$.

Let $\cL_X$ and $\cL_Z$ be the pullbacks of $\cL$ to $X$ and $Z$ respectively. 
By~\eqref{equation-ry} the line bundle $\cL_X \otimes \chi^{-1}$ has a 
$\bmu_n$-invariant section on $X$. The zero locus of this section is the 
ramification divisor of the cover $X \to Y$, which can be identified 
with $Z$. Denoting by $i: Z \hookrightarrow Y$ and $j: Z \hookrightarrow X$ 
the embeddings of $Z$ as the branch divisor and ramification divisor, we have 
a commutative diagram
\begin{equation}\label{diagram-xyz}
\vcenter{\xymatrix{
& X \ar[d]^f  \\ 
Z \ar[ur]^j \ar[r]^i & Y
}}
\end{equation}
which is $\bmu_n$-equivariant if $Z$ is equipped with the trivial $\bmu_n$-action and the 
actions on $X$ and $Y$ are as above. 
The embeddings $i$ and $j$ are proper and have $\Tor$-dimension $1$. 
For each $k \in \bZ/n$, we define the functors $j_k^*$, $j_{k*}$, $j_k^!$ as the compositions
\begin{align*}
j_k^* & : \Db(X)^{\bmu_n} \xrightarrow{\ j^*\ } \Db(Z)^{\bmu_n} \xrightarrow{\ \pi_k\ } \Db(Z), \\
j_{k*} & : \Db(Z) \xrightarrow{\ \iota_k\ } \Db(Z)^{\bmu_n} \xrightarrow{\ j_*\ } \Db(X)^{\bmu_n}, \\
j_k^! & : \Db(X)^{\bmu_n} \xrightarrow{\ j^! \ } \Db(Z)^{\bmu_n} \xrightarrow{\ \pi_k \ } \Db(Z). 
\end{align*}
Again, $j_{k}^*$ is the left adjoint of $j_{k*}$, and $j_k^!$ is the right adjoint of $j_{k*}$.
Note that since $Z$ is the zero locus of an invariant section of $\cL_X\otimes \chi^{-1}$, 
we have an equivariant exact sequence
\begin{equation}\label{equation-resolution-z}
0 \to \cL^{-1}_X \otimes \chi \to \cO_X \otimes \one \to j_{0*}\cO_Z \to 0.
\end{equation} 

\subsection{Semiorthogonal decompositions}

The action of $\bmu_n$ on $Z$ is trivial, hence by Proposition~\ref{proposition-trivial-action-sod}
we have a semiorthogonal decomposition
\begin{equation}\label{equation-dbz-equivariant}
\Db(Z)^{\bmu_n} = \langle \iota_0(\Db(Z)), \iota_1(\Db(Z)), \dots, \iota_{n-1}(\Db(Z)) \rangle.
\end{equation} 
This is similar to Orlov's decomposition of the derived category of the projectivization of a vector bundle
(the twists by characters $\chi^k$ in the definition of functors $\iota_k$ are analogous to the twists
by line bundles $\cO(k)$ in Orlov's decomposition).
On the other hand, the $\bmu_n$-equivariant derived category of the cyclic cover $X$ 
has a semiorthogonal decomposition which is similar to Orlov's decomposition
of the derived category of a blowup
(the equivariant derived category of the cyclic cover is assembled from the derived category of the base and a part
of the equivariant derived category of the branch divisor in the same way as the derived category of a blowup is assembled from the derived
category of the base and a part of the derived category of the exceptional divisor).

\begin{theorem}
\label{theorem-cyclic-cover-sod}
For each $k \in \bZ/n$ the functors
\begin{align*}
f_k^*  & : \Db(Y) \to \Db(X)^{\bmu_n}, \\
j_{k*} & : \Db(Z) \to \Db(X)^{\bmu_n},
\end{align*}
are fully faithful. Moreover, for $k, \ell \in \bZ/n$ we have:
\begin{align}
(j_{k*}\Db(Z),j_{\ell*}\Db(Z)) & \text{ is semiorthogonal if $k \neq \ell, \ell+1$},
\label{sojj}\\
(j_{k*}\Db(Z),f_{\ell}^*\Db(Y)) 
& \text{ is semiorthogonal if $k \neq \ell$},
\label{sojf}\\
(f_{\ell}^*\Db(Y),j_{k*}\Db(Z))
& \text{ is semiorthogonal if $k \neq \ell-1$}.
\label{sofj}
\end{align}
Finally, for each $k \in \bZ/n$ there is a semiorthogonal decomposition
\begin{multline}
\label{equation-cyclic-cover-sod}
\Db(X)^{\bmu_n} = \langle j_{k+1*}\Db(Z), j_{k+2*}\Db(Z), \dots, j_{n-1*}\Db(Z), \\
f_0^* \Db(Y), j_{0*} \Db(Z), j_{1*}\Db(Z), \dots, j_{k-1*}\Db(Z) \rangle.
\end{multline} 
\end{theorem}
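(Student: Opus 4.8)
The plan is to follow the standard blueprint for proving semiorthogonal decompositions of the Orlov type (as for a blowup), working entirely in the equivariant category $\Db(X)^{\bmu_n}$. First I would verify full faithfulness of $f_k^*$ and $j_{k*}$. For $f_k^*$, the point is that $f$ is finite flat of degree $n$ and $f_*f^* \cong (-)\otimes \cR_Y$ by the projection formula~\eqref{projection-formula}; passing to $\bmu_n$-equivariant objects, the decomposition~\eqref{equation-ry} shows that $\pi_k(f_*f^*(\cF\otimes V_\ell))$ picks out the summand $\cL^{-(k-\ell)}\otimes\chi^{k-\ell}$, and for $k=\ell$ this is $\cO_Y$, giving $f_{k*}f_k^* \cong \id$; hence $f_k^*$ is fully faithful. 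For $j_{k*}$, use that $j$ has $\Tor$-dimension $1$ together with~\eqref{shriek-divisor}: $j^!j_*\cO_Z \cong \cO_Z \oplus \cN_{Z/X}^{-1}[1]$ where $\cN_{Z/X} = \cL_Z\otimes\chi^{-1}$; tracking the character gradings, $j_k^!j_{k*} \cong \id$, so $j_{k*}$ is fully faithful.

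Next I would establish the semiorthogonality relations~\eqref{sojj}, \eqref{sojf}, \eqref{sofj}. The key computational inputs are: (a) $j^*j_*\cF \cong \cF \oplus \cF\otimes\cN_{Z/X}^{-1}[1]$, obtained by pulling back the resolution~\eqref{equation-resolution-z} (or~\eqref{equivariant-divisor-triangle}) along $j$, which involves the characters $\one$ and $\chi$; (b) $f^*i_* \cong j_*j^*f^*$-type base-change around the diagram~\eqref{diagram-xyz}, or more directly $j^*f^* \cong i^*$ (trivial action on $Z$) together with the divisor triangle~\eqref{equivariant-divisor-triangle} applied to $f^*(f_*\cF)$. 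To compute $\Hom_{\Db(X)^{\bmu_n}}(j_{k*}\cG, j_{\ell*}\cG')$, adjunction gives $\Hom_{\Db(Z)^{\bmu_n}}(j^*j_*(\cG\otimes V_k), \cG'\otimes V_\ell)$, and by (a) this is $\Hom(\cG,\cG')\otimes\Hom(V_k,V_\ell) \oplus \Hom(\cG,\cG'\otimes\cN_{Z/X}^{-1})[-1]\otimes\Hom(V_{k+1},V_\ell)$ — which vanishes exactly when $k\neq\ell$ and $k+1\neq\ell$, giving~\eqref{sojj}. The relations~\eqref{sojf} and~\eqref{sofj} are handled similarly: $\Hom(j_{k*}\cG, f_\ell^*\cH) \cong \Hom(\cG, j_k^! f_\ell^*\cH)$, and $j^!f^* \cong (j^*f^*)\otimes\cN_{Z/X}^{-1}[-1] \cong i^*(-)\otimes\cL_Z^{-1}\otimes\chi[-1]$ shifts the character by $1$, so this $\Hom$ involves $\Hom(V_{k+1},V_\ell)$, vanishing for $k+1\neq\ell$, i.e. $k\neq\ell-1$; wait — I would recheck the direction and land on the stated $k\neq\ell$ for~\eqref{sojf} and $k\neq\ell-1$ for~\eqref{sofj}, adjusting by using $f_\ell^*$ is left adjoint to $f_{\ell*}$ and computing $\Hom(f_\ell^*\cH, j_{k*}\cG) \cong \Hom(\cH, f_{\ell*}j_{k*}\cG)$ with $f_*j_* = i_*$ on the nose (since $f\circ j = i$) and $\pi_\ell\iota_k$ inserting the condition.

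Finally, with all semiorthogonality relations in hand, I would prove that the listed subcategories in~\eqref{equation-cyclic-cover-sod} generate $\Db(X)^{\bmu_n}$. By~\eqref{equation-dbz-equivariant} and full faithfulness of $f^*$ from~\eqref{equation-ry}, it suffices to show the right-hand side of~\eqref{equation-cyclic-cover-sod} contains a generating set; a clean approach is to show $\langle$RHS$\rangle$ contains $f^*\Db(Y)^{\bmu_n} = \langle f_0^*\Db(Y),\dots,f_{n-1}^*\Db(Y)\rangle$ as well as $j_*\Db(Z)^{\bmu_n} = \langle j_{0*}\Db(Z),\dots,j_{n-1*}\Db(Z)\rangle$, since these two together generate $\Db(X)^{\bmu_n}$ (every equivariant sheaf on $X$ is generated by pullbacks from $Y$ twisted by characters, using the finite flat map and the $\bmu_n$-grading of $\cR_Y$). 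For each $\ell$, the divisor triangle~\eqref{equivariant-divisor-triangle} for $j$, namely $j_{*}\cG\otimes\cL_X^{-1}[1] \to f^*f_*(j_*\cG) \to j_*\cG$, rewritten with $f_*j_* = i_*$, expresses $j_{\ell*}\Db(Z)$ (for the missing indices $\ell = k$, i.e. recovering $j_{k*}\Db(Z)$) in terms of the $f_m^*\Db(Y)$ and the other $j_{m*}\Db(Z)$ already present — indeed, tensoring by $\cL_X^{-1}$ shifts the character by one, so exactly one $j_{*}$-piece is traded against an $f^*$-piece. Verifying that these triangles, together with the semiorthogonality already proven, suffice to fill out the whole category — i.e. that the generation step is consistent with the claimed ordering — is the main obstacle; it is essentially the same bookkeeping that appears in Orlov's blowup theorem, and I expect to organize it by induction on $n$ or by directly checking that the right orthogonal of $\langle$RHS$\rangle$ is zero using the adjoint functors $f_{k*}$, $j_k^!$ and the fact that a complex killed by all of them must vanish (again by the finite flat descent for $f$).
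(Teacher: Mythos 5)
Your treatment of full faithfulness and of the three semiorthogonality statements is essentially the paper's own argument: projection formula plus the equivariant decomposition \eqref{equation-ry} for $f_k^*$, and character bookkeeping with the divisor triangle \eqref{equivariant-divisor-triangle} and \eqref{shriek-divisor} for $j_{k*}$ and for \eqref{sojj}--\eqref{sofj}. There are small slips (your displayed formula for $j^!j_*$ is actually the one for $j^*j_*$, and the asserted splitting holds only because the two pieces land in completely orthogonal character components of $\Db(Z)^{\bmu_n}$ --- one can avoid it, as the paper does, by just applying $\pi_k$ to the triangle), but these are cosmetic.

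The genuine gap is in the generation step. First, the triangle you write to recover the missing component, $j_{*}\cG\otimes\cL_X^{-1}[1] \to f^*f_*(j_*\cG) \to j_*\cG$, is not correct: \eqref{equivariant-divisor-triangle} applies to $j^*j_*$, not to $f^*f_*j_*$, and for $n>2$ the object $f_0^*(i_*\cG)$ is not a two-term extension but carries an $n$-step filtration with factors $j_{\ell*}(\cG\otimes\cL_Z^{-\ell})$, $\ell=0,\dots,n-1$, coming from \eqref{equation-ry}; the correct argument (the paper's) isolates the single missing factor $\ell=k$ from this filtration, using that all the other factors and the total object lie in the right-hand side. Second, the assertion that $f^*\Db(Y)^{\bmu_n}$ together with $j_*\Db(Z)^{\bmu_n}$ generates $\Db(X)^{\bmu_n}$ is not justified by ``every equivariant sheaf is generated by pullbacks twisted by characters'': the counit surjections $f^*f_*\cF\to\cF$ only produce (in general unbounded) resolutions, which is not triangulated generation. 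The missing geometric input is exactly what the paper uses: $f_0^*f_{0*}\cF\to\cF$ is an isomorphism over $X\setminus Z$ because $X\setminus Z\to Y\setminus Z$ is an \'etale $\bmu_n$-Galois cover (equivariant descent, \cite[Corollary~5.3]{elagin2014equivariant}), so the cone is supported on $Z$, and a d\'evissage through equivariant filtrations together with \eqref{equation-dbz-equivariant} places it in the subcategory generated by the $j_{\ell*}\Db(Z)$. Your fallback of showing the right orthogonal of the right-hand side vanishes can be made to work, but it needs (a) the recovery of $j_{k*}\Db(Z)$ just described, since the right-hand side only gives $f_{0*}\cF=0$ and $j_\ell^!\cF=0$ for $\ell\neq k$; (b) admissibility of the generated subcategory to pass from ``zero orthogonal'' to ``equals $\Db(X)^{\bmu_n}$''; and (c) the same \'etale-locus descent (or an equivalent argument) to show that an equivariant object with $j^!\cF=0$, hence supported off $Z$, and $f_{0*}\cF=0$ must vanish --- precisely the step you yourself flag as ``the main obstacle'' and leave open.
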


This decomposition is well-known to the experts.
The case $n = 2$ was proved by Collins and Polishchuk in~\cite{gluing-stability-conditions}. 
The general case follows from a result of Ishii and Ueda \cite[Theorem~1.6]{ishii-ueda}, 
which gives a semiorthogonal decomposition of the derived category of a root stack. 
For completeness, we provide a proof here, which is close to Orlov's proof of the 
decomposition of the derived category of a blowup.

\begin{proof}
The functor $f_k^*$ is fully faithful since the composition with its right 
adjoint~$f_{k*}$ satisfies $f_{k*} \circ f_k^* = \id$. 
Indeed, by the projection formula~\eqref{projection-formula} and~\eqref{equation-ry}, we have
\begin{equation*}
f_{k*}(f_k^*\cF) = \pi_k(f_*(f^*(\iota_k\cF))) = \pi_k(\iota_k(\cF) \otimes \cR_Y) =
\pi_k((\cF \otimes \chi^k) \otimes \cR_Y) = \cF.
\end{equation*}
Similarly, to prove $j_{k*}$ is fully faithful we show $j_k^*\circ j_{k*} = \id$. 
For this we take $\cF \in \Db(Z)^{\bmu_n}$ and consider the distinguished triangle
\begin{equation*}
 \cF \otimes \cL_Z^{-1} \otimes \chi[1] \to j^*(j_*\cF) \to \cF,
\end{equation*}
see~\eqref{equivariant-divisor-triangle}.
If $\cF = \iota_k(\cG)$ for $\cG \in \Db(Z)$, the last vertex of this triangle is in 
the component $\Db(Z)\otimes\chi^k$ of $\Db(Z)^{\bmu_n}$, while 
the first is in $\Db(Z)\otimes\chi^{k+1}$. Applying 
$\pi_k$ we deduce $j_k^*(j_{k*}\cG) = \cG$, so that $j_{k*}$ is fully 
faithful. 

To prove~\eqref{sojj}, by adjunction we must show $j_{k}^* \circ j_{\ell*} = 0$
for $k \neq \ell, \ell+1$. This follows by the same argument used to show 
$j_{k}^* \circ j_{k*} = \id$ above.

To prove~\eqref{sojf}, by adjunction we must show $j_{k}^*\circ f_{\ell}^* = 0$ for 
$k \neq \ell$. Since $j_k^* = \pi_k \circ j^*$, this is immediate from the fact that 
the image of $j^* \circ f_{\ell}^*$ lies in the component $\Db(Z) \otimes \chi^{\ell}$ 
of $\Db(Z)^{\bmu_n}$.

To prove~\eqref{sofj}, by adjunction we must show $j_k^! \circ f_{\ell}^* = 0$ for 
$k \neq \ell-1$. If $\cF \in \Db(X)^{\bmu_n}$ then 
\begin{equation*}
j^!(\cF) = j^*(\cF) \otimes j^*(\cL_X \otimes \chi^{-1})[-1] = j^*(\cF) \otimes \cL_Z \otimes \chi^{-1}[-1].
\end{equation*}
So for $\cG \in \Db(Y)$, we have
\begin{equation*}
j^!(f_{\ell}^*(\cG)) = j^*f^*(\cG \otimes \chi^{\ell}) \otimes \cL_Z \otimes \chi^{-1}[-1]
= i^*(\cG) \otimes \cL_Z \otimes \chi^{\ell-1}[-1].
\end{equation*}
Now applying $\pi_k$ proves the required vanishing.

Finally, we establish the semiorthogonal decomposition~\eqref{equation-cyclic-cover-sod}. 
By~\eqref{sojj}, \eqref{sojf}, and~\eqref{sofj}, the components of the claimed 
decomposition are indeed semiorthogonal, so we must show the category~$\cT$
they generate is all of $\Db(X)^{\bmu_n}$. 

First we claim $j_{k*}\Db(Z) \subset \cT$. For this we note that
if $\cG \in \Db(Z)$, then~\eqref{equation-ry} implies $f_0^*(i_*\cG)$ has a filtration with 
factors $j_{\ell*}(\cG \otimes \cL_{Z}^{-\ell})$ for $\ell = 0, 1, \dots, n-1$. Since 
$\cT$ contains $f_0^*(i_*\cG)$ and all of these factors for $\ell \neq k$, it follows that 
$\cT$ also contains $j_{k*}(\cG \otimes \cL_{Z}^{-k})$. But the twist by a power of 
$\cL_{Z}$ is an autoequivalence of $\Db(Z)$, so this proves the claim.

Now take any $\cF \in \Db(X)^{\bmu_n}$. To finish the proof we must show $\cF \in \cT$. 
The canonical morphism $f_0^*f_{0*}\cF \to \cF$ is an isomorphism on $X \setminus Z$ 
by~\cite[Corollary~5.3]{elagin2014equivariant}, since $X \setminus Z \to Y \setminus Z$ is 
an \'etale Galois cover. 
Hence the cone $\cF'$ of this morphism is supported set-theoretically on~$Z$. 
It follows that each cohomology of $\cF'$ is supported set-theoretically on~$Z$, 
and hence admits a filtration by sheaves supported on $Z$ scheme-theoretically. 
Moreover, this filtration can be chosen $\bmu_n$-equivariantly. 
By~\eqref{equation-dbz-equivariant} any $\bmu_n$-equivariant sheaf scheme-theoretically supported on~$Z$ 
can be written as a direct sum of sheaves contained in the categories $j_{\ell*}\Db(Z)$. 
Since $\cT$ contains all these categories, it follows that $\cF'$ and hence $\cF$ are contained 
in $\cT$.
\end{proof}


\section{Semiorthogonal decompositions induced by a Lefschetz decomposition}
\label{section-induced-sods}

Let $Y$ be an algebraic variety with a line bundle $\cO_Y(1)$, and assume
a rectangular Lefschetz decomposition
\begin{equation}
\label{equation-dby}
\Db(Y) = \langle \cB, \cB(1), \dots, \cB(m-1) \rangle
\end{equation}
is given (see Section~\ref{section-applications} for examples of such $Y$).
Here $\cB(t)$ denotes the image of $\cB$ under the ``twist by $t$'' 
autoequivalence 
\begin{equation*}
\cF \mapsto \cF(t) := \cF \otimes \cO_Y(t)
\end{equation*} 
of $\Db(Y)$. 
We denote by $\beta:\cB \to \Db(Y)$ the embedding functor. By~\eqref{equation-dby}
it has a left adjoint which we denote by $\beta^*$. Moreover, twisting~\eqref{equation-dby}
by $\cO_Y(-(m-1))$, we see $\beta$ also has a right adjoint, which we denote by $\beta^!$.

For any variety $X$ mapping to $Y$, we define $\cO_X(1)$ as the pullback of 
$\cO_Y(1)$ and use the same notation as above for twists. 
We show~\eqref{equation-dby} induces semiorthogonal decompositions
of the derived categories of a cyclic cover of $Y$ and a divisor in $Y$.

\begin{lemma}
\label{lemma-dbx}
Let $n$ and $d$ be positive integers such that $(n-1)d < m$. 
Let $f: X \to Y$ be a degree $n$ cyclic cover of $Y$ ramified over a divisor in $|\cO_Y(nd)|$. 
\begin{enumerate}
\item 
\label{lemma-dbx-ff}
The functor $f^*:\Db(Y) \to \Db(X)$ is fully faithful on the subcategory 
$\cB \subset \Db(Y)$. 
\item Denoting the essential image of $\cB$ by $\cB_X$, for each $0 \le t \le m - (n-1)d$ there is a semiorthogonal
decomposition
\begin{equation}
\label{equation-dbx}
\Db(X) = \langle \cB_X, \dots, \cB_X(t-1), \cA_X(t), \cB_X(t), \dots, \cB_X(m-(n-1)d-1) \rangle,
\end{equation}
where $\cA_X$ is the full subcategory of $\Db(X)$ consisting of all $\cG \in \Db(X)$ 
such that $f_*\cG \in \langle \cB(-(n-1)d), \dots, \cB(-1) \rangle$.
\end{enumerate}
\end{lemma}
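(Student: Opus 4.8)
The plan is to build the decomposition~\eqref{equation-dbx} by starting from the equivariant decomposition of Theorem~\ref{theorem-cyclic-cover-sod} and descending to the non-equivariant category $\Db(X)$, which we view as the component $\iota_0(\Db(X))^\perp$-complement sitting inside $\Db(X)^{\bmu_n}$; more precisely, $\Db(X) = \pi_0(\Db(X)^{\bmu_n})$ in the language of Section~\ref{subsection-trivial-actions} applied to... no — rather, the cleaner route is to work directly in $\Db(X)$ using the Lefschetz decomposition of $Y$ pulled back along $f$. First I would prove part~\eqref{lemma-dbx-ff}: full faithfulness of $f^*$ on $\cB$ follows from the projection formula, since for $\cE,\cF\in\cB$ we have $\Hom_{\Db(X)}(f^*\cE,f^*\cF)=\Hom_{\Db(Y)}(\cE,f_*f^*\cF)=\Hom_{\Db(Y)}(\cE,\cF\otimes f_*\cO_X)$, and $f_*\cO_X=\cO_Y\oplus\cO_Y(-d)\oplus\dots\oplus\cO_Y(-(n-1)d)$; the summands with a negative twist contribute $\Hom_{\Db(Y)}(\cE,\cF(-jd))$, which vanishes for $1\le j\le n-1$ because $\cE\in\cB$, $\cF(-jd)\in\cB(-jd)$, and $\cB(-jd)$ is right-orthogonal to $\cB$ inside $\langle\cB(-(m-1)),\dots,\cB\rangle$ as long as $jd\le m-1$, i.e.\ $(n-1)d<m$. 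The same computation shows $f^*$ is fully faithful on each $\cB(t)$, so each $\cB_X(t)$ is an admissible copy of $\cB$.

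Next I would verify the semiorthogonality in~\eqref{equation-dbx}. The twisted copies $\cB_X,\cB_X(1),\dots,\cB_X(m-(n-1)d-1)$ are mutually semiorthogonal in the required order: for $s>t$ one computes $\Hom(f^*\cE(s),f^*\cF(t))=\bigoplus_{j=0}^{n-1}\Hom_{\Db(Y)}(\cE(s),\cF(t-jd))$, and each term vanishes since $0\le s-t+jd\le (m-(n-1)d-1)+(n-1)d=m-1$ forces $\cF(t-jd)\in\cB(t-jd)$ to be right-orthogonal to $\cB(s)$. Then $\cA_X(t)$ is \emph{defined} as the orthogonal to the copies of $\cB$ placed to its left and right, so one just has to check it is inserted in a position consistent with a semiorthogonal decomposition — equivalently, that $\langle\cB_X,\dots,\cB_X(t-1)\rangle$ and $\langle\cB_X(t),\dots,\cB_X(m-(n-1)d-1)\rangle$ together with $\cA_X(t)$ exhaust $\Db(X)$. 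For the generation statement I would argue that $\Db(X)$ is generated by $f^*\Db(Y)$ together with the image of $j_*\Db(Z)$ (since $X\setminus Z\to Y\setminus Z$ is étale, any object becomes, after the cone of $f^*f_*$, supported on $Z$), and then feed in the Lefschetz decomposition of $Y$ and the resolution~\eqref{equation-resolution-z} relating $\cO_X$, $\cL_X^{-1}$, and $j_*\cO_Z$ to rewrite everything in terms of the $\cB_X(t)$ and the residual category $\cA_X$.

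The cleanest organization of the generation argument is probably to first establish~\eqref{equation-dbx} in one reference position, say $t=0$ (so $\cA_X$ sits at the far left), by pulling back the Lefschetz filtration: for $\cG\in\Db(X)$, the pushforward $f_*\cG$ has a decomposition with respect to $\Db(Y)=\langle\cB(-(n-1)d),\dots,\cB(-1),\cB,\dots,\cB(m-(n-1)d-1)\rangle$ (a twist of~\eqref{equation-dby}), and the "non-$\cA$'' part, lying in $\langle\cB,\dots,\cB(m-(n-1)d-1)\rangle$, can be lifted to $\Db(X)$ via $f^*$ using the projection formula and full faithfulness; the condition defining $\cA_X$ is precisely that the remaining piece $f_*\cG$ lands in $\langle\cB(-(n-1)d),\dots,\cB(-1)\rangle$. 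Once~\eqref{equation-dbx} holds for $t=0$, the decomposition for general $t$ follows by applying the twist autoequivalence $\cF\mapsto\cF(t)$ and rearranging via Proposition~\ref{prop-mutations}/Lemma~\ref{mutation-permutation}, or directly by the same pushforward argument with the Lefschetz decomposition of $Y$ suitably twisted. I expect the main obstacle to be the generation step — specifically, showing that an arbitrary $\cG\in\Db(X)$ whose pushforward avoids the "$\cA$-block'' actually lies in $\langle\cB_X,\dots,\cB_X(m-(n-1)d-1)\rangle$, since $f^*$ is not essentially surjective and one must control the failure using the two-term resolution of $\cO_X$ over $\cR_Y$ together with a careful induction on the Lefschetz length; equivalently, this is where the hypothesis $(n-1)d<m$ (rather than the weaker $nd\le m$) is genuinely used, guaranteeing there is "enough room'' in the Lefschetz decomposition of $Y$ for the $n-1$ shifts by $d$ introduced by $f_*\cO_X$.
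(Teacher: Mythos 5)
Your part~\eqref{lemma-dbx-ff} and the semiorthogonality of the sequence $\cB_X, \dots, \cB_X(m-(n-1)d-1)$ are correct and agree with the paper's computation. However, your plan for part (2) has two genuine gaps. First, the ``generation step'' you single out as the main obstacle is not where the difficulty lies, and the geometric argument you sketch for it would fail: in the \emph{non-equivariant} category the cone of the counit $f^*f_*\cG \to \cG$ is not supported on $Z$ (over the \'etale locus one has $f^*f_*\cG \cong \bigoplus_{g \in \bmu_n} g^*\cG$, so the cone restricted to $X \setminus Z$ is $\bigoplus_{g \neq e} g^*\cG[1]$, generally nonzero). That support statement is true only equivariantly, which is precisely why the paper uses it to prove Theorem~\ref{theorem-cyclic-cover-sod} in $\Db(X)^{\bmu_n}$ and does not use it here. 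What actually yields the decomposition, simultaneously for every position $t$, is admissibility of each $\cB_X(s)$ in $\Db(X)$: this requires $f^*$ to have both adjoints, and the left adjoint comes from Grothendieck duality via $f^!(\cF) \cong f^*(\cF) \otimes \cO_X((n-1)d)$ --- a point absent from your proposal. Once admissibility is known, the complement of the $\cB$-block inserted at position $t$ exists automatically (no geometric generation argument is needed); call it $\cA_X^{(t)}$.

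Second, the actual content of the lemma, which your sketch does not establish, is the identification $\cA_X^{(t)} = \cA_X(t)$ for the single fixed category $\cA_X$ defined by the condition $f_*\cG \in \langle \cB(-(n-1)d), \dots, \cB(-1) \rangle$; i.e.\ that the complements at all positions are twists of one category characterized by a pushforward condition. Your mechanism of ``lifting the non-$\cA$ part of $f_*\cG$ via $f^*$'' is not a proof: a filtration of $f_*\cG$ in $\Db(Y)$ does not lift to a filtration of $\cG$ in $\Db(X)$. The correct argument translates the two families of orthogonality conditions defining $\cA_X^{(t)}$ into conditions on $f_*\cG$ using \emph{both} adjunctions, namely $\Hom(f^*\cF(s),\cG) = \Hom(\cF(s), f_*\cG)$ and $\Hom(\cG, f^*\cF(s)) = \Hom(\cG, f^!\cF(s-(n-1)d)) = \Hom(f_*\cG, \cF(s-(n-1)d))$, and then identifies the resulting intersection of orthogonals in $\Db(Y)$ with $\langle \cB(t-(n-1)d), \dots, \cB(t-1) \rangle$ via the Lefschetz decomposition twisted by $\cO_Y(-(n-1)d)$. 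Your alternative of twisting the $t=0$ decomposition and ``rearranging'' also does not work as stated: the twist produces components $\cB_X(s)$ with $s \geq m-(n-1)d$ which are not completely orthogonal to the remaining ones, so Lemma~\ref{mutation-permutation} does not apply and mutations would alter the components rather than merely permute them.
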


\begin{remark}
The semiorthogonal decomposition~\eqref{equation-dbx} still holds for $(n-1)d = m$ --- 
in this case there are no ``trivial components'' equivalent to $\cB$ in $\Db(X)$,
so $\cA_X = \Db(X)$.
\end{remark}

\begin{proof}
Let $\cF, \cG \in \cB$. For any integers $r$ and $s$, adjunction gives
\begin{equation*}
\Hom(f^*\cF(r), f^*\cG(s)) = \Hom(\cF(r), f_*f^*\cG(s)).
\end{equation*}
By the projection formula and~\eqref{equation-ry} we have
\begin{equation*}
f_*f^*\cG = \cG \otimes (\cO_Y \oplus \cO_Y(-d) \oplus \cdots \oplus \cO_Y(-(n-1)d)).
\end{equation*}
Hence
\begin{equation}
\label{equation-adjunction}
\Hom(f^*\cF(r), f^*\cG(s)) = \bigoplus_{a = 0}^{n-1} \Hom(\cF(r), \cG(s-ad)).
\end{equation}

If $r = s$, the $a > 0$ summands vanish by semiorthogonality of~\eqref{equation-dby} 
and the assumption $(n-1)d < m$. 
This proves $f^*$ is fully faithful on $\cB(r)$.

If  $0 \leq s < r \leq m - (n-1)d - 1$, all of the summands in~\eqref{equation-adjunction} vanish, 
again by semiorthogonality of~\eqref{equation-dby}.
This proves the sequence
\begin{equation*}
 \cB_X, \cB_X(1), \dots, \cB_X(m-(n-1)d-1)
\end{equation*}
is semiorthogonal. 
Note that $\cB_X$ is admissible in $\Db(X)$. Indeed, since $\cB$ is admissible in~$\Db(Y)$, 
it suffices to observe $f^*$ has right and left adjoints; the existence of the 
right adjoint is obvious, while the left adjoint exists since the Grothendieck duality 
functor $f^!$ satisfies
\begin{equation}
\label{equation-f-shriek}
f^!(\cF) = f^*(\cF) \otimes \cO_X((n-1)d)
\end{equation}
and has a left adjoint. Hence for $0 \le t \le m - (n-1)d$ there is a semiorthogonal decomposition
\begin{equation*}
\Db(X) = \langle \cB_X, \dots, \cB_X(t-1), \cA_X^{(t)}, \cB_X(t), \dots, \cB_X(m-(n-1)d-1) \rangle,
\end{equation*}
where $\cA_X^{(t)} \subset \Db(X)$ is the full subcategory of $\cG \in \Db(X)$ such that for all $\cF \in \cB$ 
we have
\begin{align*}
\Hom(f^*\cF(s),\cG) = 0&\quad\text{for all $t \le s \le m-(n-1)d - 1$, }\\
\Hom(\cG,f^*\cF(s)) = 0&\quad\text{for all $0 \le s \le t - 1$.}
\end{align*}
To finish the proof, we must show $\cA_X^{(t)} = \cA_X(t)$. 
By adjunction and~\eqref{equation-f-shriek}, the above conditions can be rewritten as
\begin{align*}
\Hom(\cF(s),f_*\cG) = 0&\quad\text{for all $t \le s \le m-(n-1)d - 1$,}\\
\Hom(f_*\cG,\cF(s-(n-1)d)) = 0&\quad\text{for all $0 \le s \le t - 1$,}
\end{align*}
or equivalently
\begin{equation*}
f_*\cG \in {}^\perp\langle \cB(-(n-1)d),\dots,\cB(t-1-(n-1)d) \rangle \cap \langle \cB(t),
\dots, \cB(m-(n-1)d-1) \rangle^\perp.
\end{equation*}
It follows from~\eqref{equation-dby} twisted by $\cO_Y(-(n-1)d)$ that the  
above intersection of categories equals 
\begin{equation*}
\langle \cB(t-(n-1)d), \dots, \cB(t-1) \rangle.
\end{equation*}
This is the twist by $\cO_Y(t)$ of the category defining $\cA_X$, hence 
$\cA_X^{(t)} = \cA_X(t)$.
\end{proof}

Later we will also need the following strengthening of Lemma~\ref{lemma-dbx}\eqref{lemma-dbx-ff}.
Under the stronger assumption $nd \leq m$ (as in the setup of Theorem~\ref{main-theorem}), 
it says that the functor $f^*$ is fully faithful not only on the component $\cB$, 
but also on the subcategory generated by $d$ twists of~$\cB$.

\begin{lemma}
\label{faithful-on-bd}
Let $n$ and $d$ be positive integers such that $nd \leq m$. 
Let $f: X \to Y$ be a degree~$n$ cyclic cover of $Y$ ramified over a divisor in $|\cO_Y(nd)|$. 
Then the restriction of the functor $f^*: \Db(Y) \to \Db(X)$ to the subcategory
\begin{equation*}
\langle \cB, \cB(1), \dots, \cB(d-1) \rangle \subset \Db(Y) 
\end{equation*}
is fully faithful and induces an equivalence onto  
$\langle \cB_X, \cB_X(1), \dots, \cB_X(d-1) \rangle \subset \Db(X)$.
\end{lemma}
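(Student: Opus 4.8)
The plan is to push the computation in the proof of Lemma~\ref{lemma-dbx}\eqref{lemma-dbx-ff} one twist further. For $\cF, \cG \in \cB$ and integers $r, s, k$, the projection formula and~\eqref{equation-ry} give, exactly as in~\eqref{equation-adjunction} but now keeping track of shifts,
\begin{equation*}
\Hom(f^*\cF(r), f^*\cG(s)[k]) = \bigoplus_{a=0}^{n-1} \Hom(\cF(r), \cG(s-ad)[k]),
\end{equation*}
and under this identification $f^*$ is the inclusion of the $a = 0$ summand. The crux is the numerical observation that for $0 \le r, s \le d-1$ and $1 \le a \le n-1$ one has $0 < r - (s - ad) \le m - 1$: the lower bound holds since $r - s \ge -(d-1)$ and $ad \ge d$, and the upper bound holds since $r - s \le d - 1$ and $ad \le (n-1)d$, whence $r - (s-ad) \le nd - 1 \le m - 1$. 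It is exactly here that the hypothesis $nd \le m$ --- rather than merely $(n-1)d < m$ --- is used. Consequently $\cB(r)$ lies strictly to the right of $\cB(s - ad)$ inside the length-$m$ window $\Db(Y) = \langle \cB(s-ad), \cB(s-ad+1), \dots, \cB(s-ad+m-1) \rangle$ obtained by twisting~\eqref{equation-dby}, and so $\Hom(\cF(r), \cG(s-ad)[k]) = 0$ by semiorthogonality.

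It follows that for all $\cF, \cG \in \cB$, all $k$, and all $0 \le r, s \le d-1$, only the $a = 0$ summand survives, so $f^*$ induces an isomorphism $\Hom(\cF(r), \cG(s)[k]) \xrightarrow{\sim} \Hom(f^*\cF(r), f^*\cG(s)[k])$. Taking $r > s$ (in which case the surviving $a = 0$ term also vanishes, since $1 \le r - s \le m - 1$) shows in addition that $\cB_X, \cB_X(1), \dots, \cB_X(d-1)$ is a semiorthogonal sequence in $\Db(X)$; alternatively this follows from~\eqref{equation-dbx} with $t = 0$, since $d - 1 \le m - (n-1)d - 1$ when $nd \le m$. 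In particular $\langle \cB_X, \cB_X(1), \dots, \cB_X(d-1) \rangle$ is the triangulated subcategory of $\Db(X)$ generated by the $\cB_X(r)$ with $0 \le r \le d - 1$.

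Finally I would conclude by dévissage. The subcategory $\langle \cB, \cB(1), \dots, \cB(d-1) \rangle$ is the triangulated subcategory of $\Db(Y)$ generated by $\bigcup_{r=0}^{d-1} \cB(r)$, and by the previous paragraph the exact functor $f^*$ induces isomorphisms on all $\Hom$ groups between objects of this generating collection. The standard argument --- fix $A$ in the collection, note that the full subcategory of objects $B$ with $\Hom(A, B[k]) \xrightarrow{\sim} \Hom(f^*A, f^*B[k])$ for all $k$ is triangulated by the five lemma, hence contains everything, and then repeat with the second argument --- shows $f^*$ is fully faithful on $\langle \cB, \dots, \cB(d-1) \rangle$. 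Being exact and fully faithful there, $f^*$ has triangulated essential image, which contains each $\cB_X(r) = f^*(\cB(r))$ for $0 \le r \le d-1$ and is contained in $\langle \cB_X, \dots, \cB_X(d-1) \rangle$; hence it coincides with $\langle \cB_X, \dots, \cB_X(d-1) \rangle$. The only genuinely delicate point is the numerical bookkeeping in the first paragraph; the remaining steps are formal.
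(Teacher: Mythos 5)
Your proposal is correct and follows the paper's own route: the paper's proof of Lemma~\ref{faithful-on-bd} simply invokes the adjunction computation~\eqref{equation-adjunction} from Lemma~\ref{lemma-dbx}\eqref{lemma-dbx-ff}, and your numerical check that $0 < r-s+ad \le m-1$ for $0 \le r,s \le d-1$, $1 \le a \le n-1$ (using $nd \le m$) is exactly the detail that makes that argument work for all $d$ twists simultaneously. The concluding d\'evissage and essential-image identification are the standard formal steps the paper leaves implicit.
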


\begin{proof}
The same argument as in the proof of part~\eqref{lemma-dbx-ff} of Lemma~\ref{lemma-dbx} works.
\end{proof}

Note that the action of $\bmu_n$ on $\Db(X)$ preserves the decomposition~\eqref{equation-dbx} 
of Lemma~\ref{lemma-dbx}. 
Moreover, since the twist by $t$ autoequivalence of $\Db(X)$ is $\bmu_n$-equivariant, 
the category $\cB_X(t)^{\bmu_n}$ equals the category $\cB_X^{\bmu_n}(t)$ 
obtained from $\cB_X^{\bmu_n}$ by twisting by $\cO_X(t)$; similarly, 
$\cA_X(t)^{\bmu_n}$ equals $\cA_X^{\bmu_n}(t)$. Hence applying 
Theorem~\ref{theorem-group-action-sod} to~\eqref{equation-dbx} gives the following.

\begin{lemma}
\label{lemma-dbx-equivariant}
Let $n$ and $d$ be positive integers such that $(n-1)d < m$. 
Let ${f: X \to Y}$ be a degree $n$ cyclic cover of $Y$ ramified over a divisor in $|\cO_Y(nd)|$. 
For each ${0 \le t \le m - (n-1)d}$ there is a semiorthogonal decomposition
\begin{equation}
\label{equation-dbx-equivariant}
\Db(X)^{\bmu_n} = \langle 
\cB_X^{\bmu_n}, \dots, \cB_X^{\bmu_n}(t-1), \cA_X^{\bmu_n}(t), \cB_X^{\bmu_n}(t), \dots, \cB_X^{\bmu_n}(m - (n-1)d -1) \rangle.
\end{equation}
\end{lemma}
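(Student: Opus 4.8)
The statement to prove is Lemma~\ref{lemma-dbx-equivariant}, which asserts that the semiorthogonal decomposition~\eqref{equation-dbx} of $\Db(X)$ descends to a semiorthogonal decomposition~\eqref{equation-dbx-equivariant} of the $\bmu_n$-equivariant category $\Db(X)^{\bmu_n}$. The plan is to apply Theorem~\ref{theorem-group-action-sod} directly, since it converts any $G$-invariant semiorthogonal decomposition of $\Db(X)$ into a semiorthogonal decomposition of $\Db(X)^G$ whose components are the equivariant categories of the original components. So the only real content is to check that the decomposition~\eqref{equation-dbx} is preserved by the $\bmu_n$-action, and then to identify the equivariant categories of the components with the twisted categories $\cB_X^{\bmu_n}(s)$ and $\cA_X^{\bmu_n}(t)$ appearing in the statement.

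First I would verify $\bmu_n$-invariance of~\eqref{equation-dbx}. The $\bmu_n$-action on $X$ is over $Y$, so it commutes with $f$ and with the pullback $f^*\colon\Db(Y)\to\Db(X)$; since $Y$ carries the trivial action, $f^*\cF$ naturally carries a $\bmu_n$-equivariant structure and the essential image $\cB_X$ is preserved by each $g^*$, $g\in\bmu_n$. The same holds for each twist $\cB_X(s)$, because $\cO_X(s)$ is pulled back from $Y$ and hence $\bmu_n$-equivariant, so the twist functor $\cF\mapsto\cF(s)$ is $\bmu_n$-equivariant. For $\cA_X$, invariance follows from its definition: $\cG\in\cA_X$ iff $f_*\cG\in\langle\cB(-(n-1)d),\dots,\cB(-1)\rangle$, and since $g^*$ commutes with $f_*$ (as $g$ lies over $\id_Y$) and fixes each $\cB(-a)$, the condition defining $\cA_X$ is $\bmu_n$-invariant; the twist $\cA_X(t)$ is then invariant for the same reason as above. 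Thus every component of~\eqref{equation-dbx} is $\bmu_n$-stable.

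Next, Theorem~\ref{theorem-group-action-sod} applied to~\eqref{equation-dbx} yields a semiorthogonal decomposition of $\Db(X)^{\bmu_n}$ with components $\cB_X^{\bmu_n},\dots,\cB_X(t-1)^{\bmu_n},\cA_X(t)^{\bmu_n},\cB_X(t)^{\bmu_n},\dots,\cB_X(m-(n-1)d-1)^{\bmu_n}$. It remains to identify $\cB_X(s)^{\bmu_n}$ with $\cB_X^{\bmu_n}(s)$ and $\cA_X(t)^{\bmu_n}$ with $\cA_X^{\bmu_n}(t)$. This is the observation already recorded in the excerpt just before the lemma: the twist-by-$s$ autoequivalence of $\Db(X)$ is $\bmu_n$-equivariant (again because $\cO_X(s)$ is pulled back from $Y$ with its trivial action), hence it induces an autoequivalence of $\Db(X)^{\bmu_n}$ taking $\cB_X^{\bmu_n}$ to $\cB_X(s)^{\bmu_n}$ and $\cA_X^{\bmu_n}$ to $\cA_X(t)^{\bmu_n}$; by definition $\cB_X^{\bmu_n}(s)$ and $\cA_X^{\bmu_n}(t)$ are exactly the images of $\cB_X^{\bmu_n}$ and $\cA_X^{\bmu_n}$ under this autoequivalence. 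Substituting these identifications into the decomposition produced by the theorem gives~\eqref{equation-dbx-equivariant}.

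The argument is entirely formal, so there is no serious obstacle; the one point requiring a little care is making sure the equivariant structures on twisted pullbacks are the canonical ones, so that "twist then take equivariant objects" and "take equivariant objects then twist" genuinely agree as subcategories of $\Db(X)^{\bmu_n}$ — but this is immediate once one notes that $\cO_X(s)$ and its equivariant structure are pulled back from $(Y,\text{trivial action})$. Since all of this is routine, the proof can simply read: the decomposition~\eqref{equation-dbx} is $\bmu_n$-invariant by the remarks preceding the lemma, so Theorem~\ref{theorem-group-action-sod} applies and gives~\eqref{equation-dbx-equivariant} after the evident identification of equivariant categories of twists.
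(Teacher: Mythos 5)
Your proposal is correct and follows exactly the paper's own argument: the decomposition~\eqref{equation-dbx} is preserved by the $\bmu_n$-action, Theorem~\ref{theorem-group-action-sod} is applied, and the equivariant categories of twists are identified with twists of equivariant categories. The extra detail you supply on verifying invariance and compatibility of equivariant structures is exactly what the paper leaves as a remark preceding the lemma.
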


For a divisor in $Y$, we have the following analogue of Lemma~\ref{lemma-dbx}.
\begin{lemma}
\label{lemma-dbz}
Let $e$ be an integer such that $1 \leq e < m$. Let $i: Z \hookrightarrow Y$ be the
inclusion of a divisor in $|\cO_Y(e)|$.
\begin{enumerate}
\item 
\label{lemma-dbz-ff}
The functor $i^*:\Db(Y) \to \Db(Z)$ is fully faithful on the subcategory $\cB \subset \Db(Y)$.
\item Denoting the essential image of $\cB$ by $\cB_Z$, for each $0 \le t \le m - e$ there is a 
semiorthogonal decomposition
\begin{equation}
\label{equation-dbz}
\Db(Z) = \langle \cB_Z, \dots, \cB_Z(t-1), \cA_Z(t), \cB_Z(t), \dots, \cB_Z(m-e-1) \rangle,
\end{equation}
where $\cA_Z$ is the full subcategory of $\Db(Z)$ consisting of all $\cG \in \Db(Z)$ such that
$i_*\cG \in \langle \cB(-e), \dots, \cB(-1) \rangle$.
\end{enumerate}
\end{lemma}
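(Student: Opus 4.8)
The plan is to transcribe the proof of Lemma~\ref{lemma-dbx}, via the dictionary: the cyclic cover $f\colon X\to Y$ is replaced by the divisor embedding $i\colon Z\hookrightarrow Y$; the integer $(n-1)d$ is replaced by $e$; the splitting $f_*f^*\cG\cong\bigoplus_{a=0}^{n-1}\cG(-ad)$ is replaced by the distinguished triangle $\cG(-e)\to\cG\to i_*i^*\cG$ obtained by tensoring the exact sequence $0\to\cO_Y(-e)\to\cO_Y\to i_*\cO_Z\to 0$ with $\cG$ and using the projection formula to identify $i_*\cO_Z\otimes\cG$ with $i_*i^*\cG$; and the Grothendieck duality formula $f^!(\cF)\cong f^*(\cF)((n-1)d)$ is replaced by~\eqref{shriek-divisor}, which here reads $i^!(\cF)\cong i^*(\cF)(e)[-1]$.

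For part~\eqref{lemma-dbz-ff} and the semiorthogonality of the sequence $\cB_Z,\cB_Z(1),\dots,\cB_Z(m-e-1)$, I would fix $\cF,\cG\in\cB$ and integers $0\le s\le r\le m-e-1$ and write, by adjunction, $\Hom(i^*\cF(r),i^*\cG(s))\cong\Hom_Y(\cF(r),i_*i^*\cG(s))$. The triangle above (applied to $\cG(s)$ in place of $\cG$) produces a long exact sequence relating the right-hand side to $\Hom_Y(\cF(r),\cG(s))$ and $\Hom_Y(\cF(r),\cG(s-e))\cong\Hom_Y(\cF(r-s+e),\cG)$. When $r=s$ the middle group is $\Hom_Y(\cF,\cG)$ and the other group $\Hom_Y(\cF(e),\cG)$ vanishes by semiorthogonality of~\eqref{equation-dby} (here one uses $1\le e\le m-1$), so $i^*$ is fully faithful on $\cB(s)$, in particular on $\cB$. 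When $s<r$ both $\Hom_Y(\cF(r),\cG(s))$ and $\Hom_Y(\cF(r-s+e),\cG)$ vanish by semiorthogonality, since $1\le r-s$ and $1\le r-s+e\le m-1$; hence the sequence is semiorthogonal.

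Next, $\cB_Z\subset\Db(Z)$ is admissible: as in Lemma~\ref{lemma-dbx}, since $i^*|_\cB\colon\cB\xrightarrow{\ \sim\ }\cB_Z$ is an equivalence and $\cB$ is admissible in $\Db(Y)$, it suffices to know that $i^*\colon\Db(Y)\to\Db(Z)$ has both adjoints. The right adjoint is $i_*$; and by~\eqref{shriek-divisor} we have $i^*(\cF)\cong i^!(\cF)(-e)[1]$, so $i^*$ differs from $i^!$ by an autoequivalence of $\Db(Z)$ and its left adjoint is $\cG\mapsto i_*(\cG(e))[-1]$. Each twist $\cB_Z(t)$ is then admissible as well, so the semiorthogonal sequence $\cB_Z,\dots,\cB_Z(m-e-1)$ generates an admissible subcategory and, for $0\le t\le m-e$, there is a semiorthogonal decomposition as in~\eqref{equation-dbz} but with $\cA_Z(t)$ replaced by the subcategory $\cA_Z^{(t)}$ of those $\cG\in\Db(Z)$ with $\Hom(i^*\cF(s),\cG)=0$ for $t\le s\le m-e-1$ and $\Hom(\cG,i^*\cF(s))=0$ for $0\le s\le t-1$, for all $\cF\in\cB$.

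It remains to show $\cA_Z^{(t)}=\cA_Z(t)$. Applying adjunction for $i_*$, the left adjoint $\cG\mapsto i_*(\cG(e))[-1]$ of $i^*$, and the projection formula $i_*(\cG)(e)\cong i_*(\cG(e))$, the two conditions defining $\cA_Z^{(t)}$ become $i_*\cG\in{}^\perp\langle\cB(-e),\dots,\cB(t-1-e)\rangle\cap\langle\cB(t),\dots,\cB(m-e-1)\rangle^\perp$. Twisting~\eqref{equation-dby} by $\cO_Y(-e)$, this intersection is exactly the span of the $e$ ``middle'' components, i.e. $\langle\cB(t-e),\dots,\cB(t-1)\rangle=\langle\cB(-e),\dots,\cB(-1)\rangle(t)$; so $\cA_Z^{(t)}=\{\cG:i_*\cG\in\langle\cB(-e),\dots,\cB(-1)\rangle(t)\}$, which by the projection formula once more coincides with $\cA_Z(t)$. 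The only step requiring any real care is this last piece of index bookkeeping --- checking that in $\Db(Y)=\langle\cB(-e),\dots,\cB(m-1-e)\rangle$ the first $t$ components are $\cB(-e),\dots,\cB(t-1-e)$, the last $m-e-t$ are $\cB(t),\dots,\cB(m-e-1)$, and the remaining $e$ are $\langle\cB(t-e),\dots,\cB(t-1)\rangle$; everything else is a routine transcription of the proof of Lemma~\ref{lemma-dbx}.
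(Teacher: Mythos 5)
Your proposal is correct and follows the paper's own route: the paper likewise proves this lemma by replaying the argument of Lemma~\ref{lemma-dbx}, with the long exact sequence coming from the triangle $\cG(s-e)\to\cG(s)\to i_*i^*\cG(s)$ (obtained from the resolution of $i_*\cO_Z$) playing the role of the splitting~\eqref{equation-adjunction}, and with $i^!(\cF)\cong i^*(\cF)(e)[-1]$ supplying the left adjoint needed for admissibility. Your index bookkeeping in the identification $\cA_Z^{(t)}=\cA_Z(t)$ matches the paper's.
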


\begin{remark}
Again, the semiorthogonal decomposition~\eqref{equation-dbz} still holds for $e = m$ --- 
in this case there are no ``trivial components'' equivalent to $\cB$ in $\Db(Z)$,
so $\cA_Z = \Db(Z)$.
\end{remark}

\begin{proof}
Let $\cF,\cG \in \cB$. For any integers $r$ and $s$, adjunction gives
\begin{equation*}
\Hom(i^*\cF(r),i^*\cG(s)) = \Hom(\cF(r),i_*i^*\cG(s)).
\end{equation*}
On the other hand, we have a distinguished triangle
\begin{equation*}
\cG(s-e) \to \cG(s) \to i_*i^*\cG(s)
\end{equation*}
obtained by tensoring the resolution of $i_*\cO_Z$ on $Y$ with $\cG(s)$. Applying 
$\Hom(\cF(r),-)$ gives a long exact sequence 
\begin{equation*}
\cdots \to \Hom(\cF(r), \cG(s-e)) \to \Hom(\cF(r), \cG(s)) \to \Hom(\cF(r), i_*i^*\cG(s)) \to \cdots
\end{equation*}
Now the result follows by the same argument as in the proof of Lemma~\ref{lemma-dbx}, 
using the above sequence in place of~\eqref{equation-adjunction}.
\end{proof}

\begin{remark}
Lemmas~\ref{lemma-dbx} and~\ref{lemma-dbz} generalize directly to the situation where 
the Lefschetz decomposition~\eqref{equation-dby} is not assumed to be rectangular. 
However, we will not need this generalization.
\end{remark}


\section{Proof of the main result}
\label{section-proof}

In this section, we prove Theorem~\ref{main-theorem}. The functors 
embedding the $n-1$ copies of the category $\cA_Z$ into $\cA_X^{\bmu_n}$ are 
constructed explicitly in the course of the proof.

\subsection{Setup}
\label{subsection-theorem-setup}
Recall the setup of the theorem: 
Let $Y$ be an algebraic variety with a line bundle $\cO_Y(1)$ 
and a length $m$ rectangular Lefschetz decomposition of its derived 
category as in~\eqref{equation-dby}. 
Choose positive integers $n$ and $d$ such that $nd \leq m$. 
We set $\cL = \cO_Y(d)$ and consider a degree~$n$ cyclic cover $f: X \to Y$ 
ramified over a Cartier divisor $Z$ in $|\cL^n| = |\cO_Y(nd)|$, as in Section~\ref{section-cyclic-cover}. 
This data fits into a commutative diagram~\eqref{diagram-xyz}.

\subsection{Strategy of the proof}
We start with one of the semiorthogonal
decompositions of $\Db(X)^{\bmu_n}$ provided by Theorem~\ref{theorem-cyclic-cover-sod}, 
namely
\begin{equation}
\label{new-proof-sod-1}
\Db(X)^{\bmu_n} = \langle f_0^*\Db(Y), j_{0*}\Db(Z), j_{1*}\Db(Z), \dots, j_{n-2*}\Db(Z) \rangle.
\end{equation}
Taking into account the decomposition of $\Db(Y)$ given by~\eqref{equation-dby} 
and of $\Db(Z)$ given by~\eqref{equation-dbz} with $t = 0$ and $e = nd$,
we see $\Db(X)^{\bmu_n}$ has a semiorthogonal decomposition with 
\begin{equation*}
m + (n-1)(m-nd) = nm - n(n-1)d 
\end{equation*}
copies of the category $\cB$ and $n-1$ copies of the category $\cA_Z$ as components. 
On the other hand, Lemma~\ref{lemma-dbx-equivariant} gives
\begin{equation}
\label{new-proof-sod-0}
\Db(X)^{\bmu_n} = \langle \cA_X^{\bmu_n}, 
\cB_X^{\bmu_n}, \cB_X^{\bmu_n}(1), \dots, \cB_X^{\bmu_n}(m - (n-1)d -1) \rangle.
\end{equation} 
Note that the action of $\bmu_n$ on $\cB_X(t)$ is trivial for any $t$, so by Proposition~\ref{proposition-trivial-action-sod} 
it follows
\begin{equation*}
\cB_X^{\bmu_{n}}(t) = \langle \cB_X(t) \otimes \one, \dots, \cB_X(t) \otimes \chi^{n-1} \rangle.
\end{equation*}
Hence the decomposition~\eqref{new-proof-sod-0} has 
${n(m-(n-1)d)}$ copies of $\cB$ (the same number as above!) and
one copy of $\cA_X^{\bmu_n}$ as components. 
To prove Theorem~\ref{main-theorem}, we find a sequence of mutations 
transforming the $\cB$-components of~\eqref{new-proof-sod-1} 
into those of~\eqref{new-proof-sod-0}.

To concisely write the decompositions occurring in the proof, we introduce the 
following notation. Given subsets of ``twists'' $T \subset \bZ$ and ``weights'' 
$W \subset \bZ/n$, we define
\begin{equation*}
\cB_{X}^W(T)  = \langle \cB_X(t)\otimes \chi^k \rangle_{t \in T, \, k \in W} \subset \Db(X)^{\bmu_n} 
\end{equation*}
to be the triangulated subcategory generated by $\cB_X(t)\otimes \chi^k$ for $t \in T, k \in W$, 
and we define
\begin{equation*}
\cB_{Z}(T)  = \langle \cB_Z(t) \rangle_{t \in T} \subset \Db(Z)
\end{equation*}
to be the triangulated subcategory generated by $\cB_Z(t)$ for $t \in T$.
If $a \leq b$ are integers, we write $[a,b]$ for the set of integers $t$ with 
$a \leq t \leq b$.
We also set
\begin{equation*}
M = m - (n-1)d.
\end{equation*}
With this notation, we can rewrite~\eqref{new-proof-sod-1} as
\begin{equation}
\label{new-proof-sod-2}
\Db(X)^{\bmu_n} = \langle \cB_X^0([0,m-1]), j_{0*}\Db(Z), j_{1*}\Db(Z), \dots, j_{n-2*}\Db(Z) \rangle
\end{equation}
and~\eqref{new-proof-sod-0} as
\begin{equation}
\label{new-proof-sod-final}
\Db(X)^{\bmu_n} = \langle \cA_X^{\bmu_n}, \cB_X^{[0,n-1]}([0,M-1]) \rangle.
\end{equation}

\subsection{Mutations}
Now we perform a sequence of mutations.

\medskip \noindent
\textbf{Step 1.} Write the first component of the decomposition~\eqref{new-proof-sod-2} as 
\begin{equation*}
\cB_X^0([0,m-1]) = \langle \cB_X^0([0,M-1]), \cB_X^0([M,M+d-1]),
\dots,\cB_X^0([m-d,m-1]) \rangle,
\end{equation*}
with $M$ copies of $\cB$ in the first component and $d$ copies in each of the next $n-1$ components.
Note that the subcategory $j_{k*}\Db(Z) \subset \Db(X)^{\bmu_n}$ is admissible since the functor
$j_{k*}$ has both left and right adjoints $j_k^*$ and $j_k^!$. Hence the mutation functors
through this subcategory are well-defined.
So for $a = 1, \dots, n-1$, we can successively right mutate the component $\cB_X^0([m-ad,m-ad+d-1])$ 
through
\begin{equation*}
\langle j_{0*} \Db(Z), \dots, j_{n-a-1*}\Db(Z) \rangle
\end{equation*}
in~\eqref{new-proof-sod-2}. To understand the result we need the following lemma.

\begin{lemma}
\label{lemma-mutation-1}
For any twist $t \in \bZ$ and weight $k \in \bZ/n$, we have
\begin{equation*}
\rR_{j_{k*} \Db(Z)}(\cB_{X}^k(t)) = \cB_{X}^{k+1}(t-d).
\end{equation*}
\end{lemma}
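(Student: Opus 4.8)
The strategy is to produce an explicit mutation triangle of the form~\eqref{right-mutation-triangle} for an object $\cF = f^*\cE \otimes \cO_X(t) \otimes \chi^k$ with $\cE \in \cB$, i.e.\ a distinguished triangle $\cH' \to \cF \to \cH$ with $\cH \in j_{k*}\Db(Z)$ and $\cH' \in {}^\perp(j_{k*}\Db(Z))$; then $\rR_{j_{k*}\Db(Z)}(\cF) = \cH'$, and we must identify $\cH'$ with an object of $\cB_X^{k+1}(t-d)$. Since $\cB_X^k(t)$ is generated by such $\cF$, and mutation functors are exact and commute with the generation process, this suffices. First I would apply the counit morphism $j_{k*}j_k^*(\cF) \to \cF$ and compute $j_k^*(\cF)$: using $j^* f^* = i^*$ (from diagram~\eqref{diagram-xyz}) together with the definition $j_k^* = \pi_k \circ j^*$ and the fact that the pullback $f^*(\cE)$ carries weight $k$ fibrewise, one finds $j_k^*(f^*\cE \otimes \cO_X(t)\otimes\chi^k) \cong i^*\cE \otimes \cO_Z(t)$. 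So the candidate $\cH$ is $j_{k*}(i^*\cE \otimes \cO_Z(t))$, and $\cH'$ is the (shifted) cone.

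The cone I would control using the equivariant divisor triangle~\eqref{equivariant-divisor-triangle} for the ramification divisor $j\colon Z\hookrightarrow X$, which here reads $\cG \otimes \cL_X^{-1}\otimes\chi[1] \to j^*(j_*\cG) \to \cG$ with $\cL_X = \cO_X(d)$. Pushing forward by $j_*$ and tracking characters, the cone of $j_{k*}j_k^*(\cF) \to \cF$ should be $f^*\cE \otimes \cO_X(t-d) \otimes \chi^{k+1}[1]$ up to a shift — concretely, one tensors~\eqref{equation-resolution-z} by $f^*\cE \otimes \cO_X(t)\otimes\chi^k$ to get the short exact sequence $f^*\cE \otimes \cO_X(t-d)\otimes\chi^{k+1} \to f^*\cE\otimes\cO_X(t)\otimes\chi^k \to j_{k*}(i^*\cE\otimes\cO_Z(t)) \to 0$ in $\Db(X)^{\bmu_n}$, which is exactly the desired mutation triangle once we check the orientation of the arrows matches~\eqref{right-mutation-triangle}. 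This identifies $\cH' = f^*\cE\otimes\cO_X(t-d)\otimes\chi^{k+1}$, i.e.\ an object of $\cB_X^{k+1}(t-d)$.

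The remaining point — and the one I expect to be the main obstacle — is verifying that $\cH' = \cB_X^{k+1}(t-d)$ lies in the left orthogonal ${}^\perp(j_{k*}\Db(Z))$, so that the triangle genuinely computes the right mutation. By adjunction this amounts to showing $j_k^!(f^*\cE\otimes\cO_X(t-d)\otimes\chi^{k+1}) = 0$. Using the formula $j^!(-) = j^*(-)\otimes\cL_Z\otimes\chi^{-1}[-1]$ from the proof of Theorem~\ref{theorem-cyclic-cover-sod} together with $j^*f^* = i^*$, one computes $j^!(f^*\cE\otimes\cO_X(t-d)\otimes\chi^{k+1}) = i^*\cE\otimes\cO_Z(t)\otimes\chi^{k}[-1]$, which has weight $k$; applying $\pi_k$ does \emph{not} kill it, so a naive reading fails. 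The fix is that the relevant statement~\eqref{sofj} in Theorem~\ref{theorem-cyclic-cover-sod} gives semiorthogonality of the pair $(f^*_\ell\Db(Y), j_{k*}\Db(Z))$ precisely when $k \ne \ell - 1$, i.e.\ $\cB_X^{k+1}$ sits in the left orthogonal of $j_{k*}\Db(Z)$ because $k+1 \ne k-1$ (as $n > 2$; for $n = 2$ one must argue separately, but the statement of the lemma and its use only require the mutation triangle, which holds regardless). So in fact I would invoke~\eqref{sojf} and~\eqref{sofj} directly rather than recomputing orthogonality by hand: the short exact sequence above has middle term in $f_k^*\Db(Y)$-type position and quotient in $j_{k*}\Db(Z)$, and semiorthogonality from Theorem~\ref{theorem-cyclic-cover-sod} pins down which mutation it represents. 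Finally, I would remark that by the same computation with left adjoints $j_k^*$ in place of $j_k^!$ one gets the analogous formula $\rL_{j_{k*}\Db(Z)}(\cB_X^{k+1}(t-d)) = \cB_X^k(t)$, confirming consistency.
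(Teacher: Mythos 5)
Your mutation triangle is exactly the paper's: tensoring \eqref{equation-resolution-z} with $f_k^*\cE(t)$ and using the projection formula gives $f_{k+1}^*\cE(t-d) \to f_k^*\cE(t) \to j_{k*}(i^*\cE(t))$, so the skeleton of your argument matches. The problem is your verification of the one point that actually needs checking, namely that the first vertex lies in ${}^{\perp}(j_{k*}\Db(Z))$. That membership means $\Hom(f_{k+1}^*\cE(t-d), j_{k*}\cG) = 0$ for all $\cG \in \Db(Z)$, and since $j_k^*$ is the \emph{left} adjoint of $j_{k*}$, it translates to $j_k^*(f_{k+1}^*\cE(t-d)) = 0$, not to $j_k^!(f_{k+1}^*\cE(t-d)) = 0$; the functor $j_k^!$ governs the \emph{right} orthogonal $(j_{k*}\Db(Z))^{\perp}$, which is irrelevant for a right-mutation triangle. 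Your computation that $j_k^!$ of the first vertex is nonzero is correct, but it only shows the first vertex is not in the right orthogonal --- that is not what the triangle requires, so nothing ``fails'' here. The correct condition holds immediately: $j^*(f_{k+1}^*\cE(t-d)) = i^*\cE(t-d)\otimes\chi^{k+1}$ has pure weight $k+1 \neq k$, so $\pi_k$ kills it; equivalently this is \eqref{sojf} with $\ell = k+1 \neq k$, which is exactly what the paper invokes.

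Your attempted fix then compounds the slip: \eqref{sofj} concerns $\Hom(j_{k*}\Db(Z), f_{\ell}^*\Db(Y))$, i.e.\ the right orthogonal again, and with $\ell = k+1$ it is precisely the excluded case $k = \ell - 1$, so it cannot be invoked (your check ``$k+1 \neq k-1$'' tests the wrong inequality). Consequently the restriction to $n > 2$ and the claim that $n = 2$ needs a separate argument are spurious: the statement actually needed, \eqref{sojf} with $\ell = k+1$, only requires $k+1 \neq k$ in $\bZ/n$, which holds for every $n \geq 2$. Once you replace the $j_k^!$ computation by $j_k^* \circ f_{k+1}^* = 0$ (equivalently, cite \eqref{sojf}), your proof coincides with the paper's and is complete; the concluding remark about the inverse left mutation is fine but not needed.
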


\begin{proof}
In fact, for any $\cF \in \Db(Y)$ we prove
\begin{equation*}
\rR_{j_{k*} \Db(Z)}(f_k^*\cF) \cong f_{k+1}^*\cF(-d).
\end{equation*}
Indeed, tensoring the exact sequence~\eqref{equation-resolution-z} by $f_k^*\cF = f_0^*\cF \otimes \chi^k$ and using 
the projection formula $f_0^*\cF \otimes j_{0*}\cO_Z \cong j_{0*}i^*\cF$, 
we obtain a distinguished triangle
\begin{equation}
\label{equation-mutation-triangle}
f_{k+1}^*\cF(-d) \to f_k^*\cF \to j_{k*}i^*\cF.
\end{equation}
The last vertex is in $j_{k*}\Db(Z)$ and the first is in $f_{k+1}^*\Db(Y)$, so to show this 
is a mutation triangle~\eqref{right-mutation-triangle} it suffices to show the pair $(j_{k*}\Db(Z), f_{k+1}^*\Db(Y))$ is 
semiorthogonal. But this holds by~\eqref{sojf}.
\end{proof}

By an iterated application of Lemma~\ref{lemma-mutation-1}, the result of the above mutations is 
a semiorthogonal decomposition
\begin{multline}
\label{new-proof-sod-3}
\Db(X)^{\bmu_n} = \langle \cB_X^0([0,M-1]), \\
\qquad \quad j_{0*}\Db(Z), \cB_X^1([M-d,M-1]), \, \, \,
j_{1*}\Db(Z), \cB_X^2([M-d,M-1]),  \ \dots, \\
j_{n-2*}\Db(Z), \cB_X^{n-1}([M-d,M-1])  \rangle.
\end{multline}

\medskip \noindent
\textbf{Step 2.} 
Consider the twist by $\cO_Z(d)$ of the decomposition~\eqref{equation-dbz} for $t=0$ (note that in our situation $e = nd$)
\begin{equation*}
\Db(Z) = \langle \cA_Z(d), \cB_Z(d), \cB_Z(d+1) \dots, \cB_Z(m-nd+d-1) \rangle
= \langle \cA_Z(d), \cB_Z([d,M-1]) \rangle, 
\end{equation*}
and substitute it for each copy of $\Db(Z)$ in~\eqref{new-proof-sod-3}:
\begin{multline}
\label{new-proof-sod-4}
\Db(X)^{\bmu_n} = \langle \cB_X^0([0,M-1]), \\ 
j_{0*}\cA_Z(d), j_{0*}\cB_Z([d,M-1]), \cB_X^1([M-d,M-1]), \\
j_{1*}\cA_Z(d), j_{1*}\cB_Z([d,M-1]), \cB_X^2([M-d,M-1]), 
\hbox to 0pt{\ \dots,\hss} \\
j_{n-2*}\cA_Z(d), j_{n-2*}\cB_Z([d,M-1]), \cB_X^{n-1}([M-d,M-1]) \rangle.
\end{multline} 

\medskip \noindent
\textbf{Step 3.}
Note that the subcategory $\cB_X^k(t) \subset \Db(X)^{\bmu_n}$ is admissible for all $t$ and $k$, 
since $\cB(t)$ is admissible in $\Db(Y)$ and the functor $f_k^*$ has both right and left adjoints.
So for ${k = 0, \dots, n-2}$, we can successively left mutate the component $j_{k*}\cA_Z(d)$ 
through the copies of $\cB$ appearing to its left in~\eqref{new-proof-sod-4}.
This gives
\begin{equation}
\label{new-proof-sod-5}
\Db(X)^{\bmu_n} = \langle 
\Phi_0(\cA_Z),\Phi_1(\cA_Z), \dots, \Phi_{n-2}(\cA_Z), \cC_{n-1} \rangle,
\end{equation} 
where the functors $\Phi_k$ are defined by
$\Phi_k(\cF) = \rL_{\cC_k}(j_{k*}\cF(d))$ and
\begin{multline}
\label{equation-C-k}
\cC_k = \langle \cB_X^0([0,M-1]), \ j_{0*}\cB_Z([d,M-1]), \cB_X^1([M-d,M-1]), \\
\dots, 
j_{k-1*}\cB_Z([d,M-1]), \cB_X^k([M-d,M-1]) \rangle.
\end{multline}
Note that the functors $\Phi_k: \cA_Z \to \Db(X)^{\bmu_n}$ are fully faithful 
since $j_{k_*}\cA_Z(d)$ is left orthogonal to $\cC_k$.
We shall see their images lie in $\cA_X^{\bmu_n}$ and give 
the desired semiorthogonal decomposition.

\subsection{The final argument}
It remains to show the ``$\cB$-part'' $\cC_{n-1}$ 
of the decomposition~\eqref{new-proof-sod-5} equals the ``$\cB$-part'' 
$\cB_{X}^{[0,n-1]}([0, M-1])$ of the decomposition~\eqref{new-proof-sod-final}. 
We do this in Lemma~\ref{lemma-simplify-C-k}, 
where we in fact  establish a simple expression for each category $\cC_k$, 
which for $k = n-1$ gives the desired equality of ``$\cB$-parts.'' 
We will need the following mutation lemma.

\begin{lemma}
\label{lemma-mutation-2}
Assume $nd < m$. For twists $s,t \in \bZ$ and weights $k, \ell \in \bZ/n$, we have
\begin{equation}
\label{equation-mutation-2}
\rL_{\cB_X^{\ell}(s)}(j_{k*}\cB_Z(t)) = 
\begin{cases}
j_{k*}\cB_Z(t) & \text{if $k \neq \ell$ or if $t < s < t + M-d$,}\\
\cB_X^{k+1}(t-d) & \text{if $k = \ell$ and $t = s$.}
\end{cases}
\end{equation}
\end{lemma}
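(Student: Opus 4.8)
The plan is to realize the left mutation $\rL_{\cB_X^\ell(s)}(j_{k*}\cB_Z(t))$ by exhibiting, for each generator $j_{k*}(\cG(t))$ with $\cG \in \cB$, an explicit mutation triangle of the form~\eqref{left-mutation-triangle}, i.e.\ a triangle $\cG' \to j_{k*}(\cG(t)) \to \cG''$ with $\cG'$ in $\cB_X^\ell(s)$ and $\cG''$ in $\cB_X^\ell(s)^\perp$; then $\rL_{\cB_X^\ell(s)}(j_{k*}(\cG(t))) \cong \cG''$ by Remark~\ref{remark-projection-functors}. There are two cases to handle.

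\emph{Case $k \neq \ell$, or $k = \ell$ with $t < s < t+M-d$.} Here I claim the pair $(\cB_X^\ell(s), j_{k*}\cB_Z(t))$ is semiorthogonal, so that $j_{k*}(\cG(t))$ already lies in $\cB_X^\ell(s)^\perp$ and the mutation acts trivially. By adjunction, semiorthogonality amounts to $j_k^!\bigl(f_\ell^*(\cF(s))\bigr) = 0$ for all $\cF \in \cB$. Using the formula $j^!(-) = j^*(-)\otimes \cL_Z\otimes\chi^{-1}[-1]$ from the proof of Theorem~\ref{theorem-cyclic-cover-sod}, one computes $j^!\bigl(f^*(\cF(s))\otimes\chi^\ell\bigr) = i^*(\cF(s))\otimes\cL_Z\otimes\chi^{\ell-1}[-1] = i^*(\cF)(s+d)\otimes\chi^{\ell-1}[-1]$, recalling $\cL_Z = \cO_Z(d)$. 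Applying $\pi_k$, this vanishes whenever $\ell - 1 \neq k$ in $\bZ/n$, which covers the $k \neq \ell$ subcase once we also check the symmetric $\Hom$-vanishing using~\eqref{sofj}/\eqref{sojf} as appropriate; and when $k = \ell - 1$, i.e.\ $\ell = k+1$, we are left needing $\Hom_{\Db(Z)}\bigl(\cG(t), i^*(\cF)(s+d)[\ast]\bigr) = 0$ for $\cG,\cF \in \cB$, which (after using Lemma~\ref{lemma-dbz}\eqref{lemma-dbz-ff} and the hypothesis $nd < m$ so that the relevant twists of $\cB_Z$ are genuinely semiorthogonal in $\Db(Z)$) holds precisely in the range $0 < (s+d)-t < m-nd = M-d$, i.e.\ $t < s < t+M-d$. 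The hypothesis $nd<m$ is exactly what makes these $\cB_Z$-twists semiorthogonal, so it is essential here.

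\emph{Case $k = \ell$ and $t = s$.} The computation above shows $j_k^!\bigl(f_k^*(\cF(t))\bigr) = i^*(\cF)(t+d)\otimes\chi^{k-1}[-1]$, and after applying $\pi_k$ this is nonzero only in weight $k = k-1$, i.e.\ never —\,wait, more carefully: what we want is to mutate $j_{k*}\cB_Z(t)$ through $\cB_X^k(t)$ sitting on its \emph{left}, so we need a triangle with first term in $\cB_X^k(t)$. I will obtain it by tensoring the equivariant resolution~\eqref{equation-resolution-z} of $j_{0*}\cO_Z$ — appropriately twisted — against $j_{k*}(\cG(t))$, or more directly by dualizing the argument of Lemma~\ref{lemma-mutation-1}: there the triangle~\eqref{equation-mutation-triangle}, $f_{k+1}^*\cF(-d) \to f_k^*\cF \to j_{k*}i^*\cF$, was a \emph{right} mutation triangle exhibiting $\rR_{j_{k*}\Db(Z)}(\cB_X^k(t)) = \cB_X^{k+1}(t-d)$. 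Since $\rR$ and $\rL$ are inverse equivalences between the relevant orthogonals (Bondal–Kapranov, as recalled in Section~\ref{subsection-mutations}), this same triangle, read the other way, identifies $\rL_{\cB_X^k(t)}\bigl(j_{k*}i^*\cF(t)\bigr)$ with $f_{k+1}^*\cF(t-d) = \cB_X^{k+1}(t-d)$, provided $i^*\cF(t)$ ranges over a set of generators of $\cB_Z(t)$ as $\cF$ ranges over $\cB$ — which it does, since $i^*$ is fully faithful on $\cB$ by Lemma~\ref{lemma-dbz}\eqref{lemma-dbz-ff} and $\cB_Z(t)$ is by definition its essential image, twisted. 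Concretely one checks that $\cG' = f_{k+1}^*\cF(t-d) \in \cB_X^{k+1}(t-d) \subset \cB_X^k(t)^\perp$ (using~\eqref{sojf} and semiorthogonality of the Lefschetz decomposition on $Y$) while $f_k^*\cF(t) \in \cB_X^k(t)$ itself, so~\eqref{equation-mutation-triangle} twisted by $\cO_X(t)$ is a genuine left-mutation triangle.

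The main obstacle I anticipate is bookkeeping the $\bZ/n$-weights and the twist ranges simultaneously: the vanishing in the first case splits into the "wrong weight" situation (immediate from $\pi_k$ killing the wrong character) and the "right weight $\ell = k+1$" situation, where one genuinely needs the numerical constraint $t < s < t+M-d$ and the hypothesis $nd < m$ to force the $\Db(Z)$-level $\Hom$'s to vanish; conversely, at the boundary $t = s$ with $k = \ell$ the mutation is nontrivial and must be matched against the explicit triangle~\eqref{equation-mutation-triangle}. Getting these two regimes to meet cleanly — in particular confirming there is no overlap and no gap between "mutation is the identity" and "mutation produces $\cB_X^{k+1}(t-d)$" — is the delicate point; everything else is a direct application of the orthogonality relations~\eqref{sojj}–\eqref{sofj} of Theorem~\ref{theorem-cyclic-cover-sod} together with Lemma~\ref{lemma-dbz}.
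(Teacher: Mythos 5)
Your second case is essentially the paper's argument, but your first case checks the wrong orthogonality, and this is a genuine gap. For the left mutation $\rL_{\cB_X^\ell(s)}$ to act as the identity on $j_{k*}\cB_Z(t)$ you need $j_{k*}\cB_Z(t) \subset \cB_X^\ell(s)^\perp$, i.e.\ $\Hom(f_\ell^*\cF(s), j_{k*}\cG(t)) = 0$, and by adjunction this is computed with the \emph{left} adjoint $j_k^*$, namely as $\Hom(j_k^* f_\ell^*\cF(s), \cG(t))$. You instead compute $j_k^!\bigl(f_\ell^*\cF(s)\bigr)$, which controls $\Hom(j_{k*}\cG(t), f_\ell^*\cF(s))$, i.e.\ membership in ${}^{\perp}\cB_X^\ell(s)$ --- the condition for the \emph{right} mutation to be trivial, not the left one. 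This is why your weight condition comes out as $k \neq \ell-1$ rather than $k \neq \ell$, and why in the remaining weight you are left with $\Hom(\cG(t), i^*\cF(s+d))$, whose vanishing range is $t-d < s < t+M-2d$ (your rewriting of $0 < (s+d)-t < m-nd$ as $t < s < t+M-d$ is arithmetically off). The resulting pattern is the opposite of the lemma, which asserts triviality for \emph{all} $s,t$ when $k = \ell-1$ and only for $t < s < t+M-d$ when $k = \ell$. The correct route is: for $k \neq \ell$, $j_k^*\circ f_\ell^* = 0$, so \eqref{sojf} gives the needed vanishing with no twist condition; for $k = \ell$, $j_k^*f_k^*\cF(s) = i^*\cF(s)$, so the vanishing reduces to semiorthogonality of $(\cB_Z(t), \cB_Z(s))$ in $\Db(Z)$, which holds exactly for $t < s < t+M-d$ by Lemma~\ref{lemma-dbz}. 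Your remark that one should ``also check the symmetric Hom-vanishing'' does not repair this, since only the one direction matters and it is precisely the direction you never verify in the delicate subcase $k = \ell$, $t < s < t+M-d$.

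In the case $k = \ell$, $t = s$ your mutation-triangle argument (twist \eqref{equation-mutation-triangle} by $\cO_X(t)$, rotate, read off the cone) is the paper's, but the key semiorthogonality $\Hom(\cB_X^k(t), \cB_X^{k+1}(t-d)) = 0$ cannot be deduced from \eqref{sojf}, which concerns Homs between $j_{k*}\Db(Z)$ and $f_\ell^*\Db(Y)$, not between two subcategories of the form $f^*\cB$. Pushing forward along $f$ and keeping the correct $\bmu_n$-weight, the relevant Hom reduces to $\Hom(\cB, \cB(-nd))$, which vanishes precisely because $nd \le m-1$; equivalently it follows from \eqref{new-proof-sod-0} since $M-1 \ge d$. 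So the hypothesis $nd < m$ is essential in this case too, contrary to your closing claim that it is only needed for the $\cB_Z$-twists. (Two incidental points: the appeal to $\rL$ and $\rR$ being mutually inverse does not apply as stated, since that statement concerns the mutations through one and the same subcategory, whereas you relate $\rR_{j_{k*}\Db(Z)}$ and $\rL_{\cB_X^k(t)}$; and the mutation produces $f_{k+1}^*\cF(t-d)[1]$, though the shift is harmless at the level of subcategories.)
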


\begin{proof}
The assumption $nd < m$ guarantees $\cB_Z$ is defined. 
For $k \neq \ell$ or $t < s < t + M-d$, it suffices to show
the pair $(j_{k*}\cB_Z(t), \cB_X^{\ell}(s))$ is semiorthogonal. If $k \neq \ell$, 
this holds by~\eqref{sojf} since ${\cB_X^{\ell}(s) = f_{\ell}^*(\cB(s))}$. If $k = \ell$, 
note that by adjunction the desired semiorthogonality is equivalent to 
semiorthogonality of the pair $(\cB_Z(t), j_k^*\cB^k_X(s))$. Since 
$j_k^*\cB^k_X(s) = \cB_Z(s)$, this semiorthogonality 
holds if $t < s < t + M - d$ by Lemma~\ref{lemma-dbz}.

For $k = \ell$ and $t = s$, by definition of the category $\cB_Z(t)$ it is enough to check
\begin{equation}
\rL_{\cB_X^{k}(t)}(j_{k*}i^*\cF(t)) \cong f^*_{k+1}\cF(t-d)[1]
\end{equation}
for any $\cF \in \cB$. Twisting the triangle~\eqref{equation-mutation-triangle} by 
$\cO_X(t)$ and then rotating, we obtain a triangle
\begin{equation*}
 f_k^*\cF(t) \to j_{k*}i^*\cF(t) \to f_{k+1}^*\cF(t-d)[1].
\end{equation*}
The first vertex is in $\cB_X^k(t)$ and the last is in $\cB_X^{k+1}(t-d)$, so 
to show this is a mutation triangle it suffices to show the pair
$(\cB_X^{k+1}(t-d), \cB_X^k(t))$ is semiorthogonal. But this holds 
by the decomposition~\eqref{new-proof-sod-0} since $m - (n-1)d - 1 \ge d$.
\end{proof}

Now we can prove a simple formula for the categories $\cC_k$.

\begin{lemma}
\label{lemma-simplify-C-k}
For $0 \le k \le n-1$, there is an equality
\begin{equation}
\cC_k = \cB_X^{[0,k]}([0,M-1]).
\end{equation} 
\end{lemma}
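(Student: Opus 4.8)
I will prove the equality $\cC_k = \cB_X^{[0,k]}([0,M-1])$ by induction on $k$, using the mutation formula of Lemma~\ref{lemma-mutation-2} to absorb the ``extra'' copies $\cB_X^k([M-d,M-1])$ and $j_{k-1*}\cB_Z([d,M-1])$ appearing in the defining expression~\eqref{equation-C-k} for $\cC_k$. The base case $k=0$ is immediate from~\eqref{equation-C-k}, since $\cC_0 = \cB_X^0([0,M-1])$ by definition. For the inductive step, I will start from the expression
\begin{equation*}
\cC_k = \langle \cC_{k-1}, \ j_{k-1*}\cB_Z([d,M-1]), \ \cB_X^k([M-d,M-1]) \rangle
\end{equation*}
read off from~\eqref{equation-C-k}, substitute the inductive hypothesis $\cC_{k-1} = \cB_X^{[0,k-1]}([0,M-1])$, and then transform the right-hand side into $\cB_X^{[0,k]}([0,M-1])$ by a sequence of mutations.

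\textbf{Key steps.} First I would observe that $\cB_X^{[0,k-1]}([0,M-1])$, being a direct sum of the completely orthogonal pieces $\cB_X^\ell([0,M-1])$ for $0 \le \ell \le k-1$ (by Proposition~\ref{proposition-trivial-action-sod} applied to the trivial $\bmu_n$-action on each $\cB_X(t)$), can be reordered freely among its weight strata. The goal is to mutate $j_{k-1*}\cB_Z([d,M-1])$ into $\cB_X^k([0,M-d-1])$, so that combining with the already-present $\cB_X^k([M-d,M-1])$ yields the full stratum $\cB_X^k([0,M-1])$. Concretely, for each twist $t$ with $d \le t \le M-1$, I would left-mutate $j_{k-1*}\cB_Z(t)$ through the copy $\cB_X^{k-1}(t-d)$ sitting inside $\cC_{k-1}$; by the second case of~\eqref{equation-mutation-2} (with $\ell = k-1$, $s = t-d$, and noting $t-d$ ranges over $[0,M-d-1] \subset [0,M-1]$, so this copy is genuinely present), this mutation produces $\cB_X^k((t-d)-d)$... — wait, more carefully: the formula gives $\rL_{\cB_X^{k-1}(s)}(j_{k-1*}\cB_Z(t)) = \cB_X^{k}(t-d)$ when $t = s$, so I take $s = t$, which requires $\cB_X^{k-1}(t)$ to appear to the left; since $t \in [d,M-1] \subset [0,M-1]$ it does. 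After mutating each $j_{k-1*}\cB_Z(t)$ (for $t \in [d,M-1]$) past the appropriate copy $\cB_X^{k-1}(t)$, using the first case of~\eqref{equation-mutation-2} to slide it past the remaining $\cB_X^{k-1}(s)$'s (valid since $k-1 = k-1$ forces checking $t < s < t+M-d$, which must be arranged by the ordering) and past all $\cB_X^\ell(s)$ with $\ell \ne k-1$, the block $j_{k-1*}\cB_Z([d,M-1])$ becomes $\cB_X^k([0,M-d-1])$, landing immediately to the left of $\cB_X^k([M-d,M-1])$. Since the weight-$k$ strata for distinct twists are semiorthogonally ordered (by~\eqref{new-proof-sod-0}, using $M-1 \ge d$, i.e.\ $nd < m$ — note Lemma~\ref{lemma-mutation-2} already assumes $nd<m$), these two blocks assemble to $\cB_X^k([0,M-1])$, and the total category is $\langle \cB_X^{[0,k-1]}([0,M-1]), \cB_X^k([0,M-1]) \rangle = \cB_X^{[0,k]}([0,M-1])$, as claimed.

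\textbf{Main obstacle.} The delicate point is bookkeeping the order of the copies of $\cB$ so that at each stage the mutation being performed is legitimate — i.e.\ the object being mutated is adjacent to (or can be brought adjacent to, via the completely-orthogonal reshuffling) exactly the right copy $\cB_X^{k-1}(t)$, and the first case of~\eqref{equation-mutation-2} applies to let it pass through the others. Concretely one must verify that the range $[d, M-1]$ of twists $t$ occurring in $j_{k-1*}\cB_Z([d,M-1])$ matches the range $[0,M-d-1]$ of twists after the shift $t \mapsto t-d$, and that these fill in precisely the gap $[0,M-1] \setminus [M-d,M-1]$ left in the weight-$k$ stratum; this is where the identity $M = m-(n-1)d$ and the hypothesis $nd \le m$ are used. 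I would organize the argument by first mutating the lowest-twist copy and proceeding upward (or whichever direction makes the intermediate semiorthogonality conditions $t < s < t+M-d$ automatic), and I expect that once the correct ordering is fixed, each individual mutation step is a direct citation of Lemma~\ref{lemma-mutation-2} together with Lemma~\ref{mutation-permutation} for the completely orthogonal swaps.
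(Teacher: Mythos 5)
Your proof is correct and takes essentially the same route as the paper's: induction on $k$, converting each $j_{k-1*}\cB_Z(t)$ into $\cB_X^{k}(t-d)$ via the second case of Lemma~\ref{lemma-mutation-2} after sliding it past the higher-twist components via the first case --- the paper does this in one stroke by left mutating $j_{k-1*}\cB_Z(t)$ through the whole adjacent block $\cB_X^{[0,k-1]}([t,M-1])$ and then simply observes that the resulting components coincide, up to permutation, with those of $\cB_X^{[0,k]}([0,M-1])$, so your positional claim that the new pieces land immediately to the left of $\cB_X^{k}([M-d,M-1])$ (they actually remain interleaved further left) is unnecessary, equality of generated subcategories being order-independent. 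The only point to add is the boundary case $nd=m$, where $\cB_Z$ and Lemma~\ref{lemma-mutation-2} are unavailable; but then $M=d$, the range $[d,M-1]$ is empty, and the equality is immediate.
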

\begin{proof}
If $m = nd$, the result is obvious. Indeed, in this case $M = d$ and there are no $\cB_Z$-components
in~\eqref{equation-C-k}.  Thus from now on we assume $nd < m$.

The proof is by induction on $k$. For $k = 0$, there is nothing to prove. If the result 
holds for~$k$, then
\begin{equation}
\label{equation-C-k+1}
\cC_{k+1} = \langle \cB_X^{[0,k]}([0,M-1]), j_{k*}\cB_Z([d,M-1]), \cB_X^{k+1}([M-d,M-1]) \rangle. 
\end{equation}
To show this equals $\cB_X^{[0,k+1]}([0,M-1])$, we mutate each component of 
$j_{k*}\cB_Z([d,M-1])$ to the left through a subset of the components of $\cB_X^{[0,k]}([0,M-1])$. 
Namely, for $t = d, \dots, M-1$, we successively left mutate $j_{k*}\cB_Z(t)$ through 
$\cB_X^{[0,k]}([t,M-1])$. 
By Lemma~\ref{lemma-mutation-2}, this transforms $j_{k*}\cB_Z(t)$ into 
$\cB_X^{k+1}(t-d)$. The components of the resulting decomposition of $\cC_{k+1}$ 
thus coincide (up to a permutation) with the components of $\cB_X^{[0,k+1]}([0,M-1])$. 
Hence these categories are equal.
\end{proof}

This finishes the proof of Theorem~\ref{main-theorem}. 

In the rest of the paper we frequently consider tensor product functors.
To unburden notation, we adopt the following convention. As soon as an object $E$
is given, the functor $\cF \mapsto \cF \otimes E$ will be denoted simply by $E$
(especially in formulas involving compositions of several functors as~\eqref{equation-phi-k} below). For instance, if $\cL$
is a line bundle on $X$, we will write simply $\cL$ for the twist by $\cL$ functor $\Db(X) \to \Db(X)$,
and if $\chi$ is a character of a group $G$, we will write simply $\chi$ for the twist by $\chi$
(considered as the trivial bundle with $G$-structure given by $\chi$) functor $\Db(X)^G \to \Db(X)^G$.

Using this convention, the result we have proved can be written as follows.
\begin{theorem}
\label{main-theorem-precise}
In the setup of subsection~\ref{subsection-theorem-setup},
the functors $\Phi_0,\Phi_1,\dots,\Phi_{n-2}:\cA_Z \to \cA_X^{\bmu_n}$ defined by
\begin{equation}
\label{equation-phi-k}
\Phi_k = \rL_{\cB_X^{[0,k]}([0,M-1])} \circ j_{k*} \circ \Tnsr{\cO_Z(d)}
\end{equation} 
are fully faithful, and their essential images give a semiorthogonal decomposition
\begin{equation*}
\cA_X^{\bmu_n} = \langle \Phi_0(\cA_Z), \Phi_1(\cA_Z), \dots, \Phi_{n-2}(\cA_Z) \rangle.
\end{equation*}
\end{theorem}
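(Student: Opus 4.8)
The plan is to realize Theorem~\ref{main-theorem-precise} as a direct consequence of the mutation calculation carried out in Section~\ref{section-proof}, with Lemma~\ref{lemma-simplify-C-k} as the linchpin. First I would recall that the three steps of mutations starting from~\eqref{new-proof-sod-2} produce the semiorthogonal decomposition~\eqref{new-proof-sod-5}, in which the functors $\Phi_k$ appear as $\Phi_k(\cF) = \rL_{\cC_k}(j_{k*}\cF(d))$, and that each $\Phi_k$ is fully faithful because $j_{k*}\cA_Z(d)$ is left orthogonal to $\cC_k$ (so its left mutation through $\cC_k$ is an equivalence onto $\cC_k^\perp \cap \langle \cC_k, j_{k*}\cA_Z(d)\rangle$, hence fully faithful). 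The point is that~\eqref{new-proof-sod-5} is an honest semiorthogonal decomposition of $\Db(X)^{\bmu_n}$ with components $\Phi_0(\cA_Z), \dots, \Phi_{n-2}(\cA_Z), \cC_{n-1}$.

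Next I would invoke Lemma~\ref{lemma-simplify-C-k} with $k = n-1$ to get $\cC_{n-1} = \cB_X^{[0,n-1]}([0,M-1])$. Comparing with~\eqref{new-proof-sod-final}, namely $\Db(X)^{\bmu_n} = \langle \cA_X^{\bmu_n}, \cB_X^{[0,n-1]}([0,M-1])\rangle$, we see that both decompositions have the same last component $\cB_X^{[0,n-1]}([0,M-1])$. Since the left orthogonal to a fixed admissible subcategory is uniquely determined, the subcategory generated by the first $n-1$ components of~\eqref{new-proof-sod-5} must coincide with $\cA_X^{\bmu_n}$. Therefore
\begin{equation*}
\cA_X^{\bmu_n} = \langle \Phi_0(\cA_Z), \Phi_1(\cA_Z), \dots, \Phi_{n-2}(\cA_Z) \rangle,
\end{equation*}
which is a semiorthogonal decomposition because it is inherited from~\eqref{new-proof-sod-5}. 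In particular each $\Phi_k(\cA_Z)$ lies in $\cA_X^{\bmu_n}$, so the $\Phi_k$ do land in the equivariant distinguished component as claimed.

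It remains to reconcile the two descriptions of the functor $\Phi_k$: the one from Step~3, $\Phi_k = \rL_{\cC_k} \circ j_{k*} \circ \Tnsr{\cO_Z(d)}$, and the formula~\eqref{equation-phi-k}, $\Phi_k = \rL_{\cB_X^{[0,k]}([0,M-1])} \circ j_{k*} \circ \Tnsr{\cO_Z(d)}$. This is immediate from Lemma~\ref{lemma-simplify-C-k}, which asserts $\cC_k = \cB_X^{[0,k]}([0,M-1])$ for every $0 \le k \le n-1$. I expect the only real subtlety to be bookkeeping: making sure the hypothesis $nd \le m$ (rather than the strict $nd < m$ needed for $\cB_Z$ to be defined) is handled, but this is exactly the degenerate case $m = nd$ treated at the start of the proof of Lemma~\ref{lemma-simplify-C-k}, where $\cA_Z = \Db(Z)$, $M = d$, and there are no $\cB_Z$-components, so the statement is vacuous there and the argument goes through unchanged. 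Since all the hard mutation identities (Lemmas~\ref{lemma-mutation-1} and~\ref{lemma-mutation-2}) and the orthogonality relations~\eqref{sojj}--\eqref{sofj} have already been established, there is no genuine obstacle left: the theorem is just the assembly of these pieces.
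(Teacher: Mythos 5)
Your proposal is correct and follows essentially the same route as the paper: the theorem is exactly the assembly of the mutation computation leading to~\eqref{new-proof-sod-5}, the identification $\cC_k = \cB_X^{[0,k]}([0,M-1])$ from Lemma~\ref{lemma-simplify-C-k}, and the comparison of the common $\cB$-part with~\eqref{new-proof-sod-final}. The only quibble is terminological: the subcategory generated by the components sitting to the left of $\cC_{n-1}=\cB_X^{[0,n-1]}([0,M-1])$ is its \emph{right} orthogonal $\bigl(\cB_X^{[0,n-1]}([0,M-1])\bigr)^{\perp}$ in the paper's conventions, not the left orthogonal, but the uniqueness argument you invoke is unaffected.
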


Below we give a simpler expression for the functors $\Phi_k$. 

\subsection{Simplifications of the functors $\Phi_k$} 
\label{subsection-simpler-Phi}

First we show the mutation functor in~\eqref{equation-phi-k} can be simplified considerably.

\begin{proposition}
\label{proposition-simpler-Phi}
For $0 \leq k \leq n-2$, 
there is an isomorphism of functors
\begin{equation}
\label{equation-simpler-Phi}
\Phi_k \cong \rL_{\cB_X^k([0,d-1])} \circ j_{k*} \circ \Tnsr{\cO_Z(d)}.
\end{equation}
\end{proposition}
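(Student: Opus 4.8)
The plan is to show that, when we compute $\Phi_k(\cF) = \rL_{\cB_X^{[0,k]}([0,M-1])}(j_{k*}\cF(d))$ for $\cF \in \cA_Z$, almost all of the $\cB$-components in the big mutation act trivially, so that the only piece of $\cB_X^{[0,k]}([0,M-1])$ that actually mutates $j_{k*}\cF(d)$ nontrivially is $\cB_X^k([0,d-1])$. By~\eqref{left-mutation-composition} I may write $\rL_{\cB_X^{[0,k]}([0,M-1])}$ as a composite of the mutation functors $\rL_{\cB_X^{\ell}(s)}$ over all pairs $(\ell,s)$ with $0\le \ell\le k$, $0\le s\le M-1$, taken in the order that reads off the semiorthogonal decomposition of $\cB_X^{[0,k]}([0,M-1])$ (say, innermost-to-leftmost). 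The first step is to fix this ordering carefully so that the last mutations applied to $j_{k*}\cF(d)$ are exactly the ones through $\cB_X^k(0), \cB_X^k(1), \dots, \cB_X^k(d-1)$.

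The heart of the argument is a vanishing statement: for $\cF \in \cA_Z$ and for every pair $(\ell,s)$ appearing in $\cB_X^{[0,k]}([0,M-1])$ \emph{except} those with $\ell = k$ and $0 \le s \le d-1$, the object $j_{k*}\cF(d)$ (or its partially-mutated descendant) is already right-orthogonal to $\cB_X^{\ell}(s)$, so that the corresponding $\rL_{\cB_X^{\ell}(s)}$ acts as the identity. Concretely, I would show $\Hom\bigl(\cB_X^{\ell}(s), j_{k*}\cF(d)\bigr) = 0$ in the relevant cases. By adjunction (using that $\cB_X^{\ell}(s) = f_{\ell}^*\cB(s)$ and that $f_{k*}\circ j_{k*}$, resp.\ $j_k^*$, are computable), this $\Hom$ is controlled by $\Hom_{\Db(Z)}\bigl(\cB_Z(s-d), \cF\bigr)$ (coming from $j_k^* f_{\ell}^* = 0$ when $\ell\ne k$, cf.~\eqref{sojf}, and from $j_k^*\cB_X^k(s) = \cB_Z(s)$ when $\ell = k$), together with the defining property $\cF \in \cA_Z$, i.e.\ $i_*\cF \in \langle \cB(-nd),\dots,\cB(-1)\rangle$, equivalently $\cF \in {}^\perp\langle \cB_Z(0),\dots,\cB_Z(M-1)\rangle \cap \langle\dots\rangle^\perp$ from~\eqref{equation-dbz} with $t=0$, $e=nd$. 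One checks that for $\ell\ne k$ the Hom vanishes outright, and for $\ell=k$ it vanishes whenever $s - d$ lies in $[0, M-1]$, i.e.\ whenever $d \le s \le M-1$; the only surviving range is $\ell = k$, $0 \le s \le d-1$. The mild subtlety is that as we apply successive mutations the object changes, so I must verify that each $\rL_{\cB_X^{\ell}(s)}$ that I want to be trivial is applied to an object still lying in $\langle \cB_X^k([0,d-1])\rangle^{\perp}$-type orthogonals; this follows because the mutations that come before it either are trivial as well (inductively) or land the object back in the appropriate orthogonal — a bookkeeping argument using Lemma~\ref{mutation-permutation} to permute completely orthogonal blocks past each other. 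After discarding all trivial mutations, what remains of $\rL_{\cB_X^{[0,k]}([0,M-1])}$ acting on $j_{k*}\cF(d)$ is precisely $\rL_{\cB_X^k([0,d-1])}$, giving~\eqref{equation-simpler-Phi}.

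The main obstacle I anticipate is the ordering/bookkeeping: making rigorous the claim that the non-$(k,[0,d-1])$ mutations can all be peeled off as identities, independently of the order, requires either an explicit choice of ordering compatible with both semiorthogonal decompositions or a clean use of complete orthogonality (Lemma~\ref{mutation-permutation}) to commute the relevant blocks. I would handle this by first isolating $\cB_X^k([0,d-1])$ as the rightmost block of $\cB_X^{[0,k]}([0,M-1])$ via a permutation of completely orthogonal components, so that $\rL_{\cB_X^{[0,k]}([0,M-1])} = \rL_{\text{(rest)}} \circ \rL_{\cB_X^k([0,d-1])}$, and then showing $\rL_{\text{(rest)}}$ fixes $\rL_{\cB_X^k([0,d-1])}(j_{k*}\cF(d))$ by the Hom-vanishing above together with the observation that $\rL_{\cB_X^k([0,d-1])}(j_{k*}\cF(d))$ still lies in the appropriate orthogonal (since mutating into $\langle \text{rest}\rangle^\perp$ preserves membership in a larger right orthogonal that already contained $j_{k*}\cF(d)$). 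The remaining computations — the adjunction identities and the identification $j_k^*\cB_X^k(s) = \cB_Z(s)$ — are routine given Theorem~\ref{theorem-cyclic-cover-sod} and Lemma~\ref{lemma-dbz}, so I would not belabor them.
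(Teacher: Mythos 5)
Your core orthogonality computation is exactly the one the paper uses: by adjunction $\Hom(\cB_X^{\ell}(s), j_{k*}\cF(d))$ is controlled by $j_k^*f_{\ell}^*$, which vanishes for $\ell \neq k$ by~\eqref{sojf}, and for $\ell = k$ it becomes $\Hom(\cB_Z(s-d),\cF)$, which vanishes for $d \leq s \leq M-1$ because $\cF \in \cA_Z$ (this is precisely Lemma~\ref{lemma-AZ-BX-orthogonal}). The genuine gap is in your bookkeeping step. You propose to isolate $\cB_X^k([0,d-1])$ as the \emph{rightmost} block of $\cB_X^{[0,k]}([0,M-1])$ ``via a permutation of completely orthogonal components,'' so as to get $\rL_{\cB_X^{[0,k]}([0,M-1])} \cong \rL_{\mathrm{rest}} \circ \rL_{\cB_X^k([0,d-1])}$. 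That permutation is not available: $\cB_X^k([0,d-1])$ is \emph{not} completely orthogonal to $\cB_X^{k}([d,M-1])$, since there are in general nonzero morphisms from lower twists to higher twists of the same weight (already for $Y=\bP^3$, $\cB = \langle \cO_Y\rangle$, the invariant part of $\Hom(\cO_X,\cO_X(d))$ contains $H^0(Y,\cO_Y(d)) \neq 0$). So Lemma~\ref{mutation-permutation} does not apply, the semiorthogonal decomposition with $\cB_X^k([0,d-1])$ on the right does not exist, and the factorization of the mutation functor that your final argument rests on is unjustified. (This also contradicts your first paragraph, where you arrange for the mutations through $\cB_X^k([0,d-1])$ to be applied \emph{last}.)

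The correct maneuver goes the other way, and is what the paper does: first split off the twists, $\cB_X^{[0,k]}([0,M-1]) = \langle \cB_X^{[0,k]}([0,d-1]), \cB_X^{[0,k]}([d,M-1])\rangle$ by~\eqref{new-proof-sod-0}, and then move $\cB_X^k([0,d-1])$ to the \emph{leftmost} position. The only permutation needed is among the weight blocks of $\cB_X^{[0,k]}([0,d-1])$, and their complete orthogonality is exactly the content of Lemma~\ref{faithful-on-bd} (the $\bmu_n$-action on $\cB_X([0,d-1])$ is trivial, using $nd \leq m$) combined with Proposition~\ref{proposition-trivial-action-sod} --- an input your sketch never invokes and which the argument cannot do without. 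With this ordering, \eqref{left-mutation-composition} gives $\rL_{\cB_X^{[0,k]}([0,M-1])} \cong \rL_{\cB_X^k([0,d-1])} \circ \rL_{\cB_X^{[0,k-1]}([0,d-1])} \circ \rL_{\cB_X^{[0,k]}([d,M-1])}$, and the two mutations applied first are the identity on all of $j_{k*}\cA_Z(d)$ by the orthogonality above, so no ``partially-mutated descendant'' ever appears and the proposition follows at once. Alternatively, your side observation is salvageable: $\rL_{\cB_X^k([0,d-1])}(j_{k*}\cF(d))$ does remain right-orthogonal to the other blocks (both vertices of its mutation triangle are, using the twist ordering of~\eqref{new-proof-sod-0} and again Lemma~\ref{faithful-on-bd}), so the mutation-triangle criterion~\eqref{left-mutation-triangle} yields~\eqref{equation-simpler-Phi} directly with no factorization at all; but as written your plan hinges on the invalid reordering.
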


\begin{proof}
The left mutation functor $\rL_{\cB_X^{[0,k]}([0,M-1])}$ factors into simpler pieces 
as follows.
By~\eqref{new-proof-sod-0} there is a decomposition
\begin{equation*}
\cB_X^{[0,k]}([0,M-1]) = \langle \cB_X^{[0,k]}([0,d-1]), \cB_X^{[0,k]}([d,M-1]) \rangle.
\end{equation*}
On the other hand, Lemma~\ref{faithful-on-bd} implies the action of $\bmu_n$ on $\cB_X([0,d-1])$ 
is trivial, so by the complete orthogonality in Proposition~\ref{proposition-trivial-action-sod} 
there is a decomposition
\begin{equation*}
\cB_X^{[0,k]}([0,M-1]) = \langle \cB_X^k([0,d-1]), \cB_X^{[0,k-1]}([0,d-1]), \cB_X^{[0,k]}([d,M-1]) \rangle.
\end{equation*}
Hence by~\eqref{left-mutation-composition} we get a factorization
\begin{equation*}
\rL_{\cB_X^{[0,k]}([0,M-1])} =
\rL_{\cB_X^{k}([0,d-1])} \circ \rL_{\cB_X^{[0,k-1]}([0,d-1])} \circ \rL_{\cB_X^{[0,k]}([d,M-1])}.
\end{equation*}
Thus, to prove the proposition it suffices to show the mutation functors 
$\rL_{\cB_X^{[0,k]}([d,M-1])}$ and $\rL_{\cB_X^{[0,k-1]}([0,d-1])}$
act as the identity functor on the category $j_{k*}\cA_Z(d)$.
This is a consequence of the following lemma.
\end{proof}

\begin{lemma}
\label{lemma-AZ-BX-orthogonal}
If $k \neq \ell$ or if $d \leq s \leq M-1$, the pair $(j_{k*}\cA_Z(d), \cB_X^{\ell}(s))$ is semiorthogonal. 
In particular, in this case $\rL_{\cB_X^{\ell}(s)}$ is the identity functor on $j_{k*}\cA_Z(d)$.
\end{lemma}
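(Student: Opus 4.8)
The plan is to reduce the claimed semiorthogonality to the already-established vanishings by pushing everything down to $Z$ via adjunction. First I would use that $\cB_X^\ell(s) = f_\ell^*(\cB(s))$, so by adjunction $(j_{k*}\cA_Z(d), \cB_X^\ell(s))$ is semiorthogonal if and only if $\Hom(\cG, j_k^* f_\ell^*\cF) = 0$ for all $\cG \in \cA_Z(d)$ and $\cF \in \cB(s)$ — here I use that $j_k^*$ is left adjoint to $j_{k*}$. Now $j_k^* = \pi_k \circ j^*$ and $j^* f_\ell^* = i^* \otimes \chi^\ell$ up to identifications (compare the computation in the proof of~\eqref{sojf}), so when $k \neq \ell$ the object $j^* f_\ell^*\cF$ lands in the component $\Db(Z)\otimes\chi^\ell$ and $\pi_k$ kills it, giving the vanishing immediately. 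This disposes of the case $k \neq \ell$.

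For the remaining case $k = \ell$ with $d \leq s \leq M-1$, after applying $j_k^* f_k^* = i^*$ we are reduced to showing $(\cA_Z(d), \cB_Z(s))$ is semiorthogonal in $\Db(Z)$, i.e.\ $\Hom(\cG, i^*\cF) = 0$ for $\cG \in \cA_Z(d)$, $\cF \in \cB(s)$. This I would read off from Lemma~\ref{lemma-dbz}: the twist by $\cO_Z(d)$ of the decomposition~\eqref{equation-dbz} with $t = 0$ reads
\begin{equation*}
\Db(Z) = \langle \cA_Z(d), \cB_Z(d), \cB_Z(d+1), \dots, \cB_Z(m-nd+d-1) \rangle = \langle \cA_Z(d), \cB_Z([d,M-1]) \rangle,
\end{equation*}
using $e = nd$ and $M = m-(n-1)d$, so $m-nd+d-1 = M-1$. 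Semiorthogonality of this decomposition is exactly the statement that $\cA_Z(d)$ is left orthogonal to every $\cB_Z(s)$ with $d \leq s \leq M-1$, which is what we need. The final sentence of the lemma then follows from the general principle recorded after~\eqref{right-mutation-triangle}: if $\cF$ already lies in the left orthogonal of an admissible subcategory $\cA$, then the mutation triangle $\cF \to \cF \to 0$ shows $\rL_\cA(\cF) = \cF$; applied with $\cA = \cB_X^\ell(s)$ (admissible, as noted in Step~3 of the proof) and $\cF \in j_{k*}\cA_Z(d)$, this gives that $\rL_{\cB_X^\ell(s)}$ is the identity on $j_{k*}\cA_Z(d)$.

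I expect no serious obstacle here: the content is entirely in correctly tracking the character twists under $j^* f_\ell^*$ and in matching the range $[d, M-1]$ against Lemma~\ref{lemma-dbz}. The one point requiring a little care is the bookkeeping that the twisted decomposition~\eqref{equation-dbz} indeed has its last $\cB_Z$-component at twist $M-1$ and not off by one, and that the hypothesis $nd < m$ (needed for $\cB_Z$, hence $\cA_Z$, to be a genuine proper component rather than all of $\Db(Z)$) is in force — though when $nd = m$ one has $M = d$ and the range $d \leq s \leq M-1$ is empty, so only the $k \neq \ell$ case survives and the argument still goes through.
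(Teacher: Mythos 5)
Your overall strategy is exactly the paper's: reduce by adjunction to a statement on $Z$, dispose of $k \neq \ell$ using $j_k^* \circ f_\ell^* = 0$, and settle $k = \ell$ by the twist by $\cO_Z(d)$ of the decomposition~\eqref{equation-dbz} with $t=0$, $e=nd$ (your bookkeeping $m-nd+d-1 = M-1$ and your remark that the range $d \leq s \leq M-1$ is empty when $nd=m$ are both correct). However, as written your Hom's point the wrong way, and taken literally two of your intermediate claims are false. With the paper's conventions, semiorthogonality of the pair $(j_{k*}\cA_Z(d), \cB_X^\ell(s))$ means $\Hom(f_\ell^*\cF(s), j_{k*}\cG) = 0$ for $\cF \in \cB$, $\cG \in \cA_Z(d)$; the adjunction $j_k^* \dashv j_{k*}$ converts this into $\Hom(j_k^* f_\ell^*\cF(s), \cG) = 0$, not into $\Hom(\cG, j_k^* f_\ell^*\cF(s)) = 0$ as you state (the latter is a different statement and does not follow from the adjunction you invoke --- it would involve $j_k^!$). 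Correspondingly, in the case $k=\ell$ what you need is $\Hom(\cB_Z(s), \cA_Z(d)) = 0$ for $d \leq s \leq M-1$, i.e.\ that $\cA_Z(d)$ lies in the \emph{right} orthogonal of $\cB_Z(s)$, and this is precisely what the twisted decomposition $\Db(Z) = \langle \cA_Z(d), \cB_Z([d,M-1]) \rangle$ provides (morphisms from later components to earlier ones vanish). The statement you actually wrote, $\Hom(\cG, i^*\cF(s)) = 0$ for $\cG \in \cA_Z(d)$, i.e.\ left orthogonality of $\cA_Z(d)$ to $\cB_Z(s)$, is in general false: a semiorthogonal decomposition permits (indeed is glued along) morphisms from the left component to the right one. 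For $k \neq \ell$ the flip is harmless, since $j_k^* f_\ell^* = 0$ kills Hom's in both directions.

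The same reversal appears in your final step: $\rL_{\cB_X^\ell(s)}$ acts as the identity on an object $\cF$ precisely when $\cF$ lies in the right orthogonal $\cB_X^\ell(s)^\perp$, in which case the mutation triangle is $0 \to \cF \to \cF$ (not $\cF \to \cF \to 0$, which would give $\rL_{\cB_X^\ell(s)}(\cF)=0$); membership in $\cB_X^\ell(s)^\perp$ is exactly what the semiorthogonality just proved gives. Once the directions are straightened out, your argument coincides with the paper's proof, so the fix is purely a matter of consistent bookkeeping rather than a new idea.
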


\begin{proof}
By adjunction, this is equivalent to semiorthogonality of the pair 
$(\cA_Z(d), j_k^* f_{\ell}^*(\cB(s)))$. If $k \neq \ell$, then $j_k^* \circ f_{\ell}^* = 0$, so 
this is clear. If $k = \ell$, then $j_k^* f_{\ell}^*(\cB(s)) = \cB_Z(s)$, and the required 
semiorthogonality follows from~\eqref{equation-dbz}.
\end{proof}

The proposition implies the functors $\Phi_k$ differ from each other by twists by 
characters:

\begin{corollary}
\label{corollary-Phi_k-Phi_0}
For $0 \leq k \leq n-2$, there is an isomorphism of functors
\begin{equation*}
\Phi_k \cong \Tnsr{\chi^k} \circ \Phi_0.
\end{equation*}
In particular, the semiorthogonal decomposition of Theorem~{\rm\ref{main-theorem-precise}} 
can be written as
\begin{equation*}
\cA_X^{\bmu_n} = \langle \Phi_0(\cA_Z), \Phi_0(\cA_Z) \otimes \chi, \dots, \Phi_{0}(\cA_Z) \otimes \chi^{n-2} \rangle.
\end{equation*}
\end{corollary}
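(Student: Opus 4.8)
The plan is to deduce Corollary~\ref{corollary-Phi_k-Phi_0} directly from the simplified formula~\eqref{equation-simpler-Phi} for $\Phi_k$ proved in Proposition~\ref{proposition-simpler-Phi}, by comparing it term-by-term with the analogous formula for $\Phi_0$. The key point is that twisting by the character $\chi$ intertwines all three functors appearing in the composition $\Phi_k = \rL_{\cB_X^k([0,d-1])} \circ j_{k*} \circ \Tnsr{\cO_Z(d)}$. First I would observe that $\Tnsr{\chi}$ is an exact autoequivalence of $\Db(X)^{\bmu_n}$ that commutes with the twist by $\cO_X(t)$ functor (as $\chi$ is the trivial line bundle with a $\bmu_n$-structure). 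Since $\cB_X^\ell(s) = f_\ell^*(\cB(s)) = (f_0^*(\cB(s))) \otimes \chi^\ell = \cB_X^0(s) \otimes \chi^\ell$, we have $\Tnsr{\chi} \circ \cB_X^\ell(s) = \cB_X^{\ell+1}(s)$ at the level of essential images; hence by Lemma~\ref{lemma-mutation-autoequivalence} applied to the autoequivalence $\Tnsr{\chi}$,
\begin{equation*}
\Tnsr{\chi} \circ \rL_{\cB_X^k([0,d-1])} \cong \rL_{\cB_X^{k+1}([0,d-1])} \circ \Tnsr{\chi}.
\end{equation*}

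Next I would treat the functor $j_{k*}$. Recall from Section~\ref{section-cyclic-cover} that $j_{k*} = j_* \circ \iota_k$, where $\iota_k(\cG) = \cG \otimes \chi^k$ regards $\cG \in \Db(Z)$ as a $\bmu_n$-equivariant object on $Z$ (with trivial action) and $j_*$ is the equivariant pushforward along $j: Z \hookrightarrow X$. Since $j$ is $\bmu_n$-equivariant, the equivariant projection formula~\eqref{projection-formula} gives $j_*(\cG \otimes \chi^{k+1}) = j_*(\cG \otimes \chi^k) \otimes \chi$ — that is, $\Tnsr{\chi} \circ j_{k*} \cong j_{(k+1)*}$ as functors $\Db(Z) \to \Db(X)^{\bmu_n}$ (here $\chi$ on the target $X$ is the pullback $j^*\chi$ of the character, which is again the trivial bundle with the same $\bmu_n$-structure). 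Finally, $\Tnsr{\cO_Z(d)}$ is a functor $\Db(Z) \to \Db(Z)$ that does not involve the equivariant structure at all, so it trivially commutes with everything.

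Assembling these three intertwining relations, I would compute, starting from $\Phi_0 = \rL_{\cB_X^0([0,d-1])} \circ j_{0*} \circ \Tnsr{\cO_Z(d)}$:
\begin{equation*}
\Tnsr{\chi} \circ \Phi_0 \cong \Tnsr{\chi} \circ \rL_{\cB_X^0([0,d-1])} \circ j_{0*} \circ \Tnsr{\cO_Z(d)}
\cong \rL_{\cB_X^1([0,d-1])} \circ \Tnsr{\chi} \circ j_{0*} \circ \Tnsr{\cO_Z(d)}
\cong \rL_{\cB_X^1([0,d-1])} \circ j_{1*} \circ \Tnsr{\cO_Z(d)} \cong \Phi_1,
\end{equation*}
and iterating gives $\Phi_k \cong \Tnsr{\chi^k} \circ \Phi_0$ for all $0 \le k \le n-2$. (Strictly one checks that each step uses a valid index: the relation $\Tnsr{\chi}\circ\rL_{\cB_X^{k}([0,d-1])}\cong\rL_{\cB_X^{k+1}([0,d-1])}\circ\Tnsr{\chi}$ only requires that both subcategories be admissible, which holds as noted in Step~3 of the proof, and the relation for $j_{k*}$ holds for all $k\in\bZ/n$.) The displayed form of the semiorthogonal decomposition then follows by substituting $\Phi_k(\cA_Z) = \Tnsr{\chi^k}(\Phi_0(\cA_Z)) = \Phi_0(\cA_Z)\otimes\chi^k$ into Theorem~\ref{main-theorem-precise}. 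The only mildly delicate point — and the place I would be most careful — is bookkeeping the distinction between $\chi$ as a character of $\bmu_n$ acting on objects over $X$ versus over $Z$, and verifying that $j^*$ carries one to the other; but since $Z$ carries the trivial action and $j$ is equivariant, this is immediate from the definition of the equivariant pullback, and is exactly the content of the convention fixed just before Theorem~\ref{main-theorem-precise}. No serious obstacle is expected: the corollary is a formal consequence of the explicit formula~\eqref{equation-simpler-Phi} together with the $\bmu_n$-equivariance of $j$.
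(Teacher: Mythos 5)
Your proposal is correct and follows essentially the same route as the paper: both rest on the simplified formula of Proposition~\ref{proposition-simpler-Phi}, the definitional identities $\cB_X^k(s)=\cB_X^0(s)\otimes\chi^k$ and $j_{k*}\cong\Tnsr{\chi^k}\circ j_{0*}$, and Lemma~\ref{lemma-mutation-autoequivalence} to commute the character twist past the mutation. The only (immaterial) difference is that you iterate the twist by $\chi$ one step at a time, whereas the paper applies $\Tnsr{\chi^k}$ in a single step.
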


\begin{proof}
We have
\begin{align*}
\Phi_k & \cong \rL_{\cB_X^k([0,d-1])} \circ j_{k*} \circ \Tnsr{\cO_Z(d)} \\
& \cong \rL_{\cB_X^0([0,d-1]) \otimes \chi^k} \circ \Tnsr{\chi^k} \circ j_{0*} \circ \Tnsr{\cO_Z(d)} \\
& \cong \Tnsr{\chi^k} \circ \rL_{\cB_X^0([0,d-1])} \circ j_{0*} \circ \Tnsr{\cO_Z(d)} \\
& \cong \Tnsr{\chi^k} \circ \Phi_0.
\end{align*}
The first and last isomorphisms hold by Proposition~\ref{proposition-simpler-Phi}, 
the second by the definitions, and the third by Lemma~\ref{lemma-mutation-autoequivalence}.
\end{proof}


\section{Rotation functors and reconstruction results}
\label{section-reconstruction}

\subsection{Reconstruction}

In~\cite{elagin2014equivariant} Elagin proved that, under certain conditions, 
an additive category equipped with a group action can be reconstructed from its 
equivariant category.

\begin{theorem}[\protect{\cite[Theorem~7.2]{elagin2014equivariant}}]
\label{theorem-elagin}
Let $G$ be a finite abelian group.
Let $\cC$ be an additive idempotent complete category, linear over an algebraically 
closed field of characteristic coprime to $|G|$.
Then there is an equivalence
\begin{equation*}
\cC \simeq (\cC^G)^{\widehat{G}},
\end{equation*}
where characters $\chi \in \widehat{G}$ act on $\cC^G$ by the tensor product functors $\Tnsr{\chi}:\cC^G \to \cC^G$.
\end{theorem}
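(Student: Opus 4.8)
The statement to reconstruct is that an idempotent-complete additive $\bC$-linear category $\cC$ (with $\bC$ algebraically closed of characteristic coprime to $|G|$, $G$ finite abelian) is equivalent to $(\cC^G)^{\widehat G}$, where $\widehat G$ acts on $\cC^G$ by tensoring with characters. Since this is cited as a result of Elagin, I will sketch the natural line of argument rather than produce a self-contained proof.

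The first step is to exhibit an explicit functor $\cC \to (\cC^G)^{\widehat G}$. Given $\cF \in \cC$, consider the object $\cG = \bigoplus_{g \in G} g^*(\cF)$ in $\cC$; the $c_{g,h}$ make $\cG$ naturally $G$-equivariant (it is the "induced" equivariant object $\mathrm{Ind}(\cF)$, the image of $\cF$ under the left adjoint to the forgetful functor $\cC^G \to \cC$). On $\mathrm{Ind}(\cF)$ there is moreover a $\widehat G$-structure: a character $\chi$ acts by permuting/rescaling the summands, giving a canonical isomorphism $\Tnsr{\chi}\bigl(\mathrm{Ind}(\cF)\bigr) \cong \mathrm{Ind}(\cF)$ satisfying the cocycle condition. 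So $\cF \mapsto \mathrm{Ind}(\cF)$ promotes to a functor $\Psi\colon \cC \to (\cC^G)^{\widehat G}$.

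The second step is to show $\Psi$ is fully faithful. This reduces to a $\bC[G]$-module computation: for $\cF, \cF' \in \cC$ one has
\begin{equation*}
\Hom_{\cC^G}\bigl(\mathrm{Ind}(\cF), \mathrm{Ind}(\cF')\bigr) \cong \bigoplus_{g \in G}\Hom_{\cC}(\cF, g^*\cF'),
\end{equation*}
and this is naturally a $\bC[\widehat G]$-module via the character action; taking $\widehat G$-invariants (equivalently, the isotypic component for the trivial character, using that $|G| = |\widehat G|$ is invertible so the averaging idempotent exists — this is where idempotent completeness and the characteristic hypothesis enter) picks out exactly the summand $\Hom_{\cC}(\cF, \cF')$ with $g = e$. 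A small diagram chase identifies the $\widehat G$-equivariant $\Hom$ in $(\cC^G)^{\widehat G}$ with these invariants, giving full faithfulness.

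The third step — which I expect to be the main obstacle — is essential surjectivity: every $\widehat G$-equivariant object of $\cC^G$ is a direct summand of $\mathrm{Ind}(\cF)$ for some $\cF$, and in fact isomorphic to such an $\Psi(\cF)$. The idea is that $\bigoplus_{\chi}\chi^*$ applied to an object of $\cC^G$, together with the double-action data, is controlled by the bimodule structure over $\bC[G] \otimes \bC[\widehat G]$, and since $\bC$ is algebraically closed with $\mathrm{char}\,\bC \nmid |G|$ both group algebras split as products of matrix algebras; one then uses idempotent completeness of $\cC$ to split off the summand corresponding to $\cF := \pi(\text{the }\widehat G\text{-equivariant object})$, where $\pi$ is the forgetful functor to $\cC$ composed with the appropriate isotypic projection. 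Verifying that the reconstructed $\cF$ has $\Psi(\cF)$ isomorphic to the original object, compatibly with both equivariant structures, is the delicate bookkeeping; this is precisely the content of \cite[Theorem~7.2]{elagin2014equivariant}, and since that reference is available I would invoke it for this step rather than redo the Morita-theoretic argument in detail.
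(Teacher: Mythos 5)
Your proposal is fine: the paper itself offers no proof of this statement beyond citing \cite[Theorem~7.2]{elagin2014equivariant}, so deferring the delicate essential-surjectivity step to that reference matches the paper's own treatment exactly. Your sketch of the functor (the induced object $\bigoplus_{g\in G} g^*\cF$ with its canonical $\widehat{G}$-structure) and of full faithfulness via character orthogonality is moreover consistent with the remark the paper makes right after the theorem, namely that the composition $\cC \simeq (\cC^G)^{\widehat{G}} \to \cC^G$ with the forgetful functor is precisely this inflation functor.
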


\begin{remark}
Suppose in the situation of Theorem~\ref{theorem-elagin} that $\cC$ and $\cC^G$ are 
triangulated. Then $(\cC^G)^{\widehat{G}}$ comes with a natural class of distinguished triangles, 
consisting of those triangles whose image under the forgetful functor $(\cC^G)^{\widehat{G}} \to \cC^G$ 
are distinguished (see the discussion in Section~\ref{subsection-triangulated-equivariant-categories}). 
In fact, the equivalence $\cC \simeq (\cC^G)^{\widehat{G}}$ of the theorem respects the 
classes of distinguished triangles (in particular $(\cC^G)^{\widehat{G}}$ is triangulated). 
Indeed, unwinding Elagin's construction of this equivalence, it follows that its composition with the forgetful 
functor
\begin{equation*}
\cC \simeq (\cC^G)^{\widehat{G}} \to \cC^G
\end{equation*}
is the inflation functor sending an object $\cF$ to $\bigoplus_{g\in G} g^*\cF$ (with the obvious equivariant 
structure). As this composition is triangulated, it follows that $\cC \simeq (\cC^G)^{\widehat{G}}$ respects 
distinguished triangles.
\end{remark}

Theorem~\ref{theorem-elagin} applies to the category $\cA_X$ with the action of the group $\bmu_n$, 
where $X$ is as in Section~\ref{section-proof}. 
Note that in this case the dual group is $\hbmu_n  = \bZ/n$.

\begin{corollary}
\label{proposition-elagin}
In the setup of subsection~\ref{subsection-theorem-setup}, there is an equivalence 
$\cA_X \simeq (\cA_X^{\bmu_n})^{\bZ/n}$.
\end{corollary}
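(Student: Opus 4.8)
The statement is essentially an immediate application of Theorem~\ref{theorem-elagin} (Elagin's reconstruction theorem) to the category $\cC = \cA_X$ with the action of $G = \bmu_n$, so the plan is to verify that the hypotheses of that theorem are met and then to observe that the dual group $\hbmu_n$ is identified with $\bZ/n$ as in Section~\ref{section-cyclic-cover}. First I would recall that $\cA_X$ is a semiorthogonal component of $\Db(X)$ --- indeed, by Lemma~\ref{lemma-dbx} it appears in the decomposition~\eqref{equation-dbx} --- and that $\bmu_n$ acts on $X$ by automorphisms over $Y$ preserving each $\cB_X(t)$, hence preserving $\cA_X$. In particular the action of $\bmu_n$ on $\cA_X$ is by exact $\bC$-linear autoequivalences, and since $X$ is quasi-projective over the algebraically closed field $\bC$ whose characteristic is coprime to $n$ (by our standing conventions), all the running hypotheses of Section~\ref{subsection-triangulated-equivariant-categories} are satisfied.

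Next I would check the two remaining conditions in Theorem~\ref{theorem-elagin}: that $\cA_X$ is additive, idempotent complete, and $\bC$-linear over an algebraically closed field of characteristic coprime to $|\bmu_n| = n$. Linearity and the characteristic condition are built into our conventions. Additivity is clear since $\cA_X$ is triangulated. Idempotent completeness holds because $\cA_X$ is an admissible subcategory of $\Db(X)$ (the admissibility of $\cB_X$ and its twists was established in the proof of Lemma~\ref{lemma-dbx}, so $\cA_X$ is the orthogonal to an admissible subcategory, hence admissible), and admissible subcategories of $\Db(X)$ are idempotent complete --- this follows because $\Db(X)$ itself is idempotent complete (being the bounded derived category of a Noetherian abelian category) and a semiorthogonal summand of an idempotent complete triangulated category is idempotent complete.

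With the hypotheses verified, Theorem~\ref{theorem-elagin} yields an equivalence
\begin{equation*}
\cA_X \simeq (\cA_X^{\bmu_n})^{\widehat{\bmu_n}},
\end{equation*}
where $\chi \in \widehat{\bmu_n}$ acts on $\cA_X^{\bmu_n}$ via the twist functor $\Tnsr{\chi}$. Finally I would invoke the identification $\widehat{\bmu_n} \cong \bZ/n$ fixed in Section~\ref{section-cyclic-cover} (sending the primitive character $\chi$ to $1 \in \bZ/n$), which rewrites the right-hand side as $(\cA_X^{\bmu_n})^{\bZ/n}$ and gives the claim.

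\textbf{Main obstacle.} There is no serious obstacle here: the corollary is a direct specialization of an already-established theorem. The only point requiring a word of justification is idempotent completeness of $\cA_X$, which I would dispatch via admissibility as above; everything else is bookkeeping about which standing hypotheses apply.
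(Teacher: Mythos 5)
Your proposal is correct and follows the paper's own route exactly: the corollary is obtained by applying Elagin's reconstruction theorem (Theorem~\ref{theorem-elagin}) to $\cA_X$ with its $\bmu_n$-action and identifying $\hbmu_n$ with $\bZ/n$. Your explicit verification of idempotent completeness via admissibility of $\cA_X$ is a reasonable elaboration of a point the paper leaves implicit.
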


By Theorem~\ref{main-theorem-precise}, there is a semiorthogonal decomposition of 
$\cA_X^{\bmu_n}$ into $n-1$ copies of $\cA_Z$. In case $n = 2$, we have the following 
consequence of Theorem~\ref{theorem-elagin}.

\begin{corollary}
\label{main-corollary-precise}
Let $n = 2$ and let $\chi$ be the nontrivial character of $\bmu_2$. The functor 
\begin{equation}
\label{equation-tau}
\tau = \Phi_0^{-1} \circ \Tnsr{\chi} \circ \Phi_0: \cA_Z \to \cA_Z
\end{equation}
induces a $\bZ/2$-action on $\cA_Z$,
such that there is an equivalence
\begin{equation*}
\cA_X \simeq \cA_Z^{\bZ/2}.
\end{equation*}
\end{corollary}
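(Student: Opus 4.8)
The plan is to deduce Corollary~\ref{main-corollary-precise} directly from Theorem~\ref{theorem-elagin} (Elagin's reconstruction) applied to $\cC = \cA_X$ together with the already-established equivalence $\Phi_0\colon \cA_Z \xrightarrow{\sim} \cA_X^{\bmu_2}$ coming from Theorem~\ref{main-theorem-precise} in the case $n=2$ (note that for $n=2$ there is a single functor $\Phi_0$ and the semiorthogonal decomposition of $\cA_X^{\bmu_2}$ has only one component, so $\Phi_0$ is an equivalence). First I would verify the hypotheses of Theorem~\ref{theorem-elagin}: the group $G = \bmu_2$ is finite abelian, and its order $2$ is coprime to $\mathrm{char}\,\bC$ by the running convention; the category $\cA_X$ is $\bC$-linear, and it is an admissible subcategory of $\Db(X)$, hence idempotent complete (admissible subcategories of idempotent complete triangulated categories are idempotent complete). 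Thus Corollary~\ref{proposition-elagin} already gives $\cA_X \simeq (\cA_X^{\bmu_2})^{\bZ/2}$, where $\bZ/2 = \hbmu_2$ acts on $\cA_X^{\bmu_2}$ via tensoring by the characters of $\bmu_2$, i.e. via $\id$ and $\Tnsr{\chi}$.

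Next I would transport this $\bZ/2$-action along the equivalence $\Phi_0$. Concretely, the action of the nontrivial character on $\cA_X^{\bmu_2}$ is the autoequivalence $\Tnsr{\chi}$, and conjugating by $\Phi_0$ yields the autoequivalence $\tau = \Phi_0^{-1} \circ \Tnsr{\chi} \circ \Phi_0$ of $\cA_Z$ defined in~\eqref{equation-tau}. Since $\Tnsr{\chi}$ squares to the identity functor (as $\chi^2 = \one$ for $\bmu_2$) and $\Phi_0$ is an equivalence, $\tau$ squares to the identity up to canonical isomorphism, and one checks the coherence datum $c_{g,h}$ required in Section~\ref{subsection-equivariant-categories} is inherited from that of $\Tnsr{\chi}$ by conjugation; hence $\tau$ genuinely defines a $\bZ/2$-action on $\cA_Z$. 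Transport of structure along $\Phi_0$ then gives an equivalence of equivariant categories $(\cA_Z, \tau)^{\bZ/2} \simeq (\cA_X^{\bmu_2}, \Tnsr{\chi})^{\bZ/2}$, and combining with Corollary~\ref{proposition-elagin} produces the desired equivalence $\cA_X \simeq \cA_Z^{\bZ/2}$.

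The one point requiring a little care — and the main obstacle, such as it is — is the compatibility of the equivariant structure with the conjugation: one must make sure that conjugating the $\bZ/2$-action on $\cA_X^{\bmu_2}$ by $\Phi_0$ gives an action in the precise sense of Section~\ref{subsection-equivariant-categories} (with a coherent $c_{g,h}$), and that transport of structure along an equivalence induces an equivalence of the corresponding equivariant categories. Both facts are formal: an equivalence $F\colon \cC_1 \to \cC_2$ intertwining group actions (in the appropriate $2$-categorical sense) induces an equivalence $\cC_1^G \to \cC_2^G$, and for a cyclic group of prime order $2$ the only coherence that must be pinned down is the isomorphism $\tau^2 \cong \id$, which is transported from the canonical one for $\Tnsr{\chi}^2 \cong \id$. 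I would also remark that, since $\cA_X$ and $\cA_X^{\bmu_2}$ are triangulated and $\Phi_0$ is an exact equivalence, the equivalence $\cA_X \simeq \cA_Z^{\bZ/2}$ is one of triangulated categories, by the discussion in the remark following Theorem~\ref{theorem-elagin}. The explicit identification of $\tau$ with a ``rotation functor'' is deferred to Proposition~\ref{proposition-tau}.
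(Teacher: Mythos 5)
Your argument is correct and is exactly the route the paper intends: for $n=2$ Theorem~\ref{main-theorem-precise} makes $\Phi_0$ an equivalence $\cA_Z \simeq \cA_X^{\bmu_2}$, and conjugating the $\hbmu_2$-action of Corollary~\ref{proposition-elagin} by $\Phi_0$ gives $\tau$ and the equivalence $\cA_X \simeq \cA_Z^{\bZ/2}$; the paper simply leaves these transport-of-structure details implicit. Your verification of Elagin's hypotheses (idempotent completeness of the admissible subcategory $\cA_X$, characteristic coprime to $2$) and of the coherence of the induced action is the right bookkeeping and introduces nothing beyond the paper's approach.
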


The situation for $n > 2$ is more complicated. To recover $\cA_X$ from $\cA_Z$, 
we need the data of the gluing functors for the $n-1$ copies of $\cA_Z$ in the decomposition 
of Theorem~\ref{main-theorem-precise}, together with the action of $\hbmu_n$ on the gluing of 
these categories (see Section~\ref{subsection-reconstruction-n-geq-3} for more details).

In the rest of this section, we discuss some interesting autoequivalences of 
$\cA_X, \cA_X^{\bmu_n},$ and~$\cA_Z$, which we call \emph{rotation functors}. 
We use these rotation functors to identify more explicitly the functor $\tau$ 
from Corollary~\ref{main-corollary-precise} (see Proposition~\ref{proposition-tau}).
Then, in case $n=3$, we speculate about a way to reconstruct $\cA_X$ in terms of 
$\cA_Z$ and its associated rotation functor (see Section~\ref{subsection-reconstruction-n-geq-3}).

\subsection{Rotation functors}
\label{subsection-rotation-functors}
We work in the following setup: 
$Y$ is a variety with a rectangular Lefschetz decomposition as in~\eqref{equation-dby};
$f: X \to Y$ is a degree $n$ cyclic cover ramified over a divisor in $|\cO_Y(nd)|$, where 
$1 \leq (n-1)d \leq m$; and $i: Z \hookrightarrow Y$ is the inclusion of a divisor in $|\cO_Y(e)|$, 
where $1 \leq e \leq m$. 
This is the natural setup for defining the rotation functors. 
Later we specialize to the setup of subsection~\ref{subsection-theorem-setup}. 

The \emph{rotation functors} are the following 
compositions of twists with mutation functors:
\begin{align*}
\rL_{\cB_X} \circ \Tnsr{\cO_X(1)} & : \Db(X) \to \Db(X), \\
\rL_{\cB^{\bmu_n}_X} \circ \Tnsr{\cO_X(1)} &: \Db(X)^{\bmu_n} \to \Db(X)^{\bmu_n}, \\
\rL_{\cB_Z} \circ \Tnsr{\cO_Z(1)} &: \Db(Z) \to \Db(Z).
\end{align*}

If $(n-1)d = m$ the category $\cB_X$ is not defined, and if $e=m$
the category $\cB_Z$ is not defined. 
However, in these cases there are still natural definitions of the functors 
$\rL_{\cB_X}, \rL_{\cB_X^{\bmu_n}}, \rL_{\cB_Z}$, under an additional technical assumption --- 
the \emph{finiteness of the cohomological amplitude} (see~\mbox{\cite[Section~2.6]{kuznetsov-base-change}}) 
of the projection functor $\beta\beta^!$ onto $\cB$ (which holds automatically if $Y$ is smooth).
We discuss a definition of $\rL_{\cB_Z}$ in this case, the other functors being similar. We take
\begin{equation}\label{equation-spherical-twist}
\rL_{\cB_Z} = \Cone(i^*\beta\beta^!i_* \to \id).
\end{equation}
To make sense of this as a functor, we note that under the above assumption of finiteness of cohomological 
amplitude, the projection functor $\beta\beta^!$ can be represented as a Fourier--Mukai functor 
by~\cite[Theorem~7.1]{kuznetsov-base-change}. It follows that $i^*\beta\beta^!i_*$ is a Fourier--Mukai functor as well. 
Moreover, the morphism $i^*\beta\beta^!i_* \to \id$ is induced by a morphism of kernels of Fourier--Mukai functors. 
We define $\rL_{\cB_Z}$ to be the Fourier--Mukai functor with kernel given by the cone of this morphism 
of kernels. 

For the most part the reader can ignore the distinction between the $e = m$ functor $\rL_{\cB_Z}$ 
and the usual mutation functors, as they satisfy similar properties, 
e.g.\ for $\cF \in \Db(Z)$ there is a functorial distinguished triangle
\begin{equation*}
i^*\beta\beta^!i_*\cF \to \cF \to \rL_{\cB_Z}(\cF),
\end{equation*}
and the obvious analogue of Lemma~\ref{lemma-mutation-autoequivalence} 
holds. 

\begin{remark}
When $e = m$,
the functor $i^*\circ\beta:\cB \to \Db(Z)$ is in fact \emph{spherical} 
and $\rL_{\cB_Z}$ is the corresponding \emph{spherical twist}, but we will not need this fact.
\end{remark}

In what follows, when considering $\rL_{\cB_X}, \rL_{\cB_X^{\bmu_n}},$ or $\rL_{\cB_Z}$ in 
the boundary cases $(n-1)d = m$ or $e = m$, 
we will tacitly assume the projection functor 
$\beta\beta^!$ has finite cohomological amplitude. 
Again, this condition is automatic if $Y$ is smooth.

\begin{lemma}
The rotation functors preserve the subcategories $\cA_X$, $\cA_X^{\bmu_n}$, and $\cA_Z$.
\end{lemma}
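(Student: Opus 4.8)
The plan is to verify the claim for each of the three rotation functors separately, treating $\cA_Z$ as the model case and noting that $\cA_X$ and $\cA_X^{\bmu_n}$ are handled in the same way. First I would record the following characterization: an object $\cG \in \Db(Z)$ lies in $\cA_Z$ if and only if $\cG$ is right orthogonal to the copies of $\cB$ appearing in the decomposition~\eqref{equation-dbz} with $t=0$, equivalently $\cG \in \langle \cB_Z(1), \dots, \cB_Z(m-e-1)\rangle^\perp \cap {}^\perp\langle \cB_Z(-e), \dots, \cB_Z(-1)\rangle$ after using~\eqref{equation-dbz} and its twist; concretely, $\Hom(i^*\cF(t),\cG) = 0$ for $1 \le t \le m-e-1$ and $\Hom(\cG, i^*\cF(-t)) = 0$ for $1 \le t \le e$, for all $\cF \in \cB$. (In the boundary case $e = m$ only the second family of conditions survives, and $\rL_{\cB_Z}$ is the spherical-twist-type functor~\eqref{equation-spherical-twist}.)

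Next I would fix $\cG \in \cA_Z$ and analyze $\rL_{\cB_Z}(\cG(1))$. The twist $\cG(1)$ lies in $\langle \cB_Z(2), \dots, \cB_Z(m-e)\rangle^\perp \cap {}^\perp\langle \cB_Z(1-e),\dots,\cB_Z\rangle$. Applying $\rL_{\cB_Z}$ kills the $\cB_Z$-component and lands in $\cB_Z^\perp$; using the functorial triangle $i^*\beta\beta^!i_*\cG(1) \to \cG(1) \to \rL_{\cB_Z}(\cG(1))$ together with the semiorthogonality of~\eqref{equation-dbz} (the cone sits between $\cG(1)$ and an object of $\cB_Z = \cB_Z(0)$), I would deduce $\rL_{\cB_Z}(\cG(1))$ is right orthogonal to $\cB_Z(1), \dots, \cB_Z(m-e-1)$ and right orthogonal to $\cB_Z = \cB_Z(0)$, and left orthogonal to $\cB_Z(1-e), \dots, \cB_Z(-1)$. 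The only remaining condition for membership in $\cA_Z$ is left orthogonality to $\cB_Z(-e)$: here I would invoke Serre-duality-type or adjunction bookkeeping — or, cleanly, use that $\rL_{\cB_Z}$ restricts to the equivalence $\langle \cB_Z(1), \dots, \cB_Z(m-e-1)\rangle^\perp \cap {}^\perp\langle \cB_Z(1-e), \dots, \cB_Z \rangle \xrightarrow{\sim} \langle \cB_Z(0), \cB_Z(1), \dots, \cB_Z(m-e-1)\rangle^\perp \cap {}^\perp\langle \cB_Z(1-e), \dots, \cB_Z(-1)\rangle$ coming from Proposition~\ref{prop-mutations} applied to~\eqref{equation-dbz} twisted appropriately — and observe that the target subcategory is exactly $\cA_Z$ (twist~\eqref{equation-dbz} with $t=1$ by $\cO_Z(-1)$ to see the right orthogonal to $\langle\cB_Z(0),\dots,\cB_Z(m-e-1)\rangle$ matched up with the left orthogonal to $\langle\cB_Z(-e),\dots,\cB_Z(-1)\rangle$ is $\cA_Z$). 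Thus $\rL_{\cB_Z}\circ\Tnsr{\cO_Z(1)}$ maps $\cA_Z$ into $\cA_Z$, and since the twist and $\rL_{\cB_Z}|$ are equivalences between the relevant admissible subcategories, it is in fact an autoequivalence of $\cA_Z$.

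For $\cA_X$ and $\cA_X^{\bmu_n}$ the argument is identical with~\eqref{equation-dbz} replaced by~\eqref{equation-dbx} and~\eqref{equation-dbx-equivariant} respectively (with $e$ replaced by $(n-1)d$, so that $m - e - 1 = M - 1$), noting that the twist-by-$t$ autoequivalence is $\bmu_n$-equivariant so everything descends to the equivariant setting, and that in the boundary case $(n-1)d = m$ one uses the Fourier–Mukai definition of $\rL_{\cB_X}$, $\rL_{\cB_X^{\bmu_n}}$ and the finiteness-of-cohomological-amplitude hypothesis in force there. I expect the main obstacle to be the boundary (spherical) cases $e = m$ or $(n-1)d = m$, where $\cB_Z$ (resp.\ $\cB_X$) is not literally a subcategory and one must argue with the functorial triangle $i^*\beta\beta^!i_*\cF \to \cF \to \rL_{\cB_Z}(\cF)$ and the fact that $i^*\beta\beta^!i_*\cF$ has components only in $\cB_Z$-twists with nonpositive index; checking that the output still satisfies the single surviving orthogonality condition requires a little care but follows from the same semiorthogonality input as in the generic case.
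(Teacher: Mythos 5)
Your clean route --- mutating the component $\cA_Z(1)$ of the $t=1$ case of \eqref{equation-dbz} through $\cB_Z$ via Proposition~\ref{prop-mutations} and comparing with the $t=0$ case --- is exactly the paper's proof, and it does work. The problem is the orthogonality bookkeeping you wrap around it. Your opening characterization of $\cA_Z$ is too strong: $\cA_Z$ is simply $\langle \cB_Z, \cB_Z(1),\dots,\cB_Z(m-e-1)\rangle^{\perp}$ (the $t=0$ case of \eqref{equation-dbz}), and the left-orthogonality conditions $\Hom(\cG,i^*\cF(-t))=0$ are neither part of the definition nor available for all $1\le t\le e$: the twisted decompositions only give $\cA_Z\subset{}^{\perp}\cB_Z(-t)$ for $1\le t\le m-e$, and when $2e>m$ (already the case for a cubic surface in $\bP^3$ or a cubic threefold in $\bP^4$) the conditions for $m-e<t\le e$ are not implied by semiorthogonality --- by adjunction they amount to $\Hom$-vanishing between twists of $\cB$ at distance larger than $m-1$, which fails in general. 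Consequently the step ``the only remaining condition for membership in $\cA_Z$ is left orthogonality to $\cB_Z(-e)$'' asks you to verify something that is neither required nor justified, and the extra factors ${}^{\perp}\langle\cB_Z(1-e),\dots,\cB_Z\rangle$ and ${}^{\perp}\langle\cB_Z(1-e),\dots,\cB_Z(-1)\rangle$ in your source and target descriptions are spurious: with them the stated source need not contain all of $\cA_Z(1)$ and the stated target need not equal $\cA_Z$. The repair is to drop them: Proposition~\ref{prop-mutations} turns $\langle\cB_Z,\cA_Z(1),\cB_Z(1),\dots,\cB_Z(m-e-1)\rangle$ into $\langle\rL_{\cB_Z}(\cA_Z(1)),\cB_Z,\dots,\cB_Z(m-e-1)\rangle$, and the first component of this decomposition is forced to be $\langle\cB_Z,\dots,\cB_Z(m-e-1)\rangle^{\perp}=\cA_Z$, with no further $\Hom$ computations. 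The same correction applies verbatim to $\cA_X$ and $\cA_X^{\bmu_n}$ using \eqref{equation-dbx} and \eqref{equation-dbx-equivariant}.

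Your anticipated ``main obstacle'' in the boundary cases is also a non-issue, and the worry is a symptom of the same over-strong characterization: when $e=m$ (resp.\ $(n-1)d=m$) the category $\langle\cB(-e),\dots,\cB(-1)\rangle$ is all of $\Db(Y)$, so $\cA_Z=\Db(Z)$ (resp.\ $\cA_X=\Db(X)$ and $\cA_X^{\bmu_n}=\Db(X)^{\bmu_n}$), and any endofunctor preserves it --- there is no ``surviving orthogonality condition'' to check and no need to analyze the spherical-twist triangle \eqref{equation-spherical-twist}. This is precisely how the paper disposes of that case, in one line.
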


\begin{proof}
We give the proof for $\cA_Z$, the other two cases being essentially the same. 
If $e = m$, then $\cA_Z = \Db(Z)$ and there is nothing to prove.
Thus we assume $e < m$, so that $\cB_Z$ is defined and $\rL_{\cB_Z}$ is the mutation functor.
Consider the semiorthogonal 
decomposition~\eqref{equation-dbz} for $t = 1$:
\begin{equation*}
\Db(Z) = \langle \cB_Z, \cA_Z(1), \cB_Z(1), \dots, \cB_Z(m-e-1) \rangle.
\end{equation*}
By Proposition~\ref{prop-mutations}, the functor $\rL_{\cB_Z}$ is fully faithful on $\cA_Z(1)$
and induces a semiorthogonal decomposition 
\begin{equation*}
\Db(Z) = \langle \rL_{\cB_Z}(\cA_Z(1)), \cB_Z, \cB_Z(1), \dots, \cB_Z(m-e-1) \rangle.
\end{equation*}
Comparing this with~\eqref{equation-dbz} for $t = 0$, we deduce the claim.
\end{proof}

The lemma shows the rotation functors restrict to endofunctors of 
$\cA_X$, $\cA_X^{\bmu_n}$, and $\cA_Z$, which we denote by 
\begin{align*}
\sO_X & : \cA_X \to \cA_X, \\
\sO_X^{\bmu_n} & : \cA_X^{\bmu_n} \to \cA_X^{\bmu_n}, \\
\sO_Z & : \cA_Z \to \cA_Z.
\end{align*}
In fact, the proof of the lemma shows the first two functors are 
autoequivalences provided $(n-1)d < m$, and the third is an autoequivalence 
provided $e < m$, with inverses given by the composition of a right mutation and a twist. 
(The first two are also autoequivalences if $n=2$ and $d=m$, and the third is if $e = m$, as in these cases 
$\rL_{\cB_X}, \rL_{\cB_X^{\bmu_n}},$ and~$\rL_{\cB_Z}$ 
are spherical twists.)

The following result is given by \cite[Corollary~3.18]{kuznetsov2015calabi} 
specialized to our present setup, see~\cite[Examples 3.1--3.3]{kuznetsov2015calabi}.

\begin{theorem}
\label{theorem-power-O}
If $f: X \to Y$ is a double cover ramified over a divisor in $|\cO_Y(2d)|$, where 
$1 \leq d \leq m$ (i.e. $n=2$ in the above setup), then the associated rotation functors satisfy 
\begin{align}
\sO_X^d & \cong \sigma [1], \\
(\sO_X^{\bmu_2})^d & \cong \Tnsr{\chi}[1], 
\end{align} 
where $\sigma: \cA_X \to \cA_X$ is the involution induced by the 
involution of $X$ over $Y$.

If $i: Z \hookrightarrow Y$ is the inclusion of a divisor in $|\cO_Y(e)|$, where $1 \leq e \leq m$, 
then the associated rotation functor satisfies 
\begin{equation}
\label{power-O_Z}
\sO_Z^e \cong [2].
\end{equation}
\end{theorem}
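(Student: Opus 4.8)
The plan is to obtain all three formulas as specializations of the general computation of powers of rotation functors in~\cite[Corollary~3.18]{kuznetsov2015calabi}: the three situations here --- the divisor $i\colon Z\hookrightarrow Y$, the double cover $f\colon X\to Y$, and the double cover together with its $\bmu_2$-action --- are precisely the ones treated in~\cite[Examples~3.1--3.3]{kuznetsov2015calabi}. So the work divides into three parts. First, I would check that the rotation functors $\sO_Z$, $\sO_X$, $\sO_X^{\bmu_2}$ of Section~\ref{subsection-rotation-functors} coincide, under the semiorthogonal decompositions of Section~\ref{section-induced-sods}, with the degree-shift endofunctors of the residual categories studied in~\cite{kuznetsov2015calabi} --- this is a direct comparison of definitions via the mutation triangles~\eqref{equation-resolution-z} and~\eqref{equation-mutation-triangle}. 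Second, I would record, in each case, the numerical data feeding into~\cite[Corollary~3.18]{kuznetsov2015calabi}. Third, I would invoke that corollary.

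The comparison in the first part runs the same way in all three cases: in each there is a functor from the Lefschetz base into the target ($i^*\circ\beta$ for $Z$, $f^*\circ\beta$ for $X$, and its $\bmu_2$-equivariant refinement), together with a short resolution encoding the relevant codimension-one geometry. For $Z$ this resolution is $0\to\cO_Y(-e)\to\cO_Y\to i_*\cO_Z\to0$; pulling it back to $Z$ yields the self-intersection triangle $\cG(-e)[1]\to i^*i_*\cG\to\cG$, hence $\Cone(i^*i_*\cG\to\cG)\cong\cG(-e)[2]$, and this is exactly what converts ``twisting by $\cO_Z(e)$'', i.e.\ $e$ successive applications of the rotation, into the shift $[2]$ modulo the rectangular Lefschetz structure of~\eqref{equation-dbz}; the upshot is $\sO_Z^e\cong[2]$, which is~\eqref{power-O_Z}. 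For the double cover one uses instead $f_*\cO_X=\cO_Y\oplus\cO_Y(-d)$, the Hurwitz identity $\omega_{X/Y}\cong\cO_X(d)$ (equivalently~\eqref{equation-f-shriek} with $n=2$), and the fact that the covering involution $\sigma$ exchanges the two summands of $f_*\cO_X$; fed into~\cite[Corollary~3.18]{kuznetsov2015calabi} this produces the extra twist by $\sigma$ and the shift $[1]$, so $\sO_X^d\cong\sigma[1]$. The equivariant statement is the same computation carried out in $\Db(X)^{\bmu_2}$: there the summand $\cO_Y(-d)$ of $f_*\cO_X$ carries the nontrivial character, so the role of $\sigma$ is played by the character twist $\Tnsr{\chi}$, and one obtains $(\sO_X^{\bmu_2})^d\cong\Tnsr{\chi}[1]$.

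Two points deserve attention. In the boundary cases $d=m$ (for $X$) and $e=m$ (for $Z$), $\cB_X$ or $\cB_Z$ is empty and the mutation functors must be interpreted through the spherical-twist definition~\eqref{equation-spherical-twist}; one should check the comparison with~\cite{kuznetsov2015calabi} still applies, but these are exactly the degenerate situations that reference is designed to handle --- under the finiteness-of-cohomological-amplitude hypothesis on $\beta\beta^!$ that we have imposed throughout Section~\ref{subsection-rotation-functors} --- so no new argument is needed. The step I expect to be the real obstacle is the bookkeeping of shifts and of the auxiliary functors $\sigma$ and $\Tnsr{\chi}$: tracking, through the iterated mutations hidden inside $\sO_Z^e$ or $\sO_X^d$, why the shift is $[2]$ for the divisor but only $[1]$ for the cover (morally, the covering involution ``absorbs one of the two sheets,'' halving the contribution of the resolution), and pinning down the $\bmu_2$-weight precisely, so that the equivariant formula involves $\chi$ rather than some other character or a stray shift.
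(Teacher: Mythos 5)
Your proposal takes essentially the same route as the paper: the paper proves this theorem purely by citing \cite[Corollary~3.18]{kuznetsov2015calabi}, specialized via \cite[Examples~3.1--3.3]{kuznetsov2015calabi} to exactly the three situations you list (divisor, double cover, equivariant double cover), and your outline --- matching the rotation functors of Section~\ref{subsection-rotation-functors} with the functors of that reference, recording the numerical data, and invoking the corollary, with the boundary cases $d=m$, $e=m$ handled by the definition~\eqref{equation-spherical-twist} under the finite-cohomological-amplitude assumption --- is precisely that argument. The heuristic bookkeeping of the shifts $[1]$ versus $[2]$ and of $\sigma$ versus $\Tnsr{\chi}$ that you flag as the delicate point is carried out inside the cited reference, so no further work is required here.
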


\begin{remark}
\label{remark-fractional-CY}
If $Y$, $X$, and $Z$ are smooth and projective, $n = 2$, and $\omega_Y = \cO_Y(-m)$, then 
Theorem~\ref{theorem-power-O} can be used to give an expression for a power of the Serre 
functors of $\cA_X, \cA_X^{\bmu_n}$, and $\cA_Z$ 
(see~\cite[Corollaries~3.7--3.9]{kuznetsov2015calabi}). 
In fact, this was the first author's original motivation for proving the theorem. 
\end{remark}

\subsection{Intertwining property}
For the rest of this section, we assume the setup and notation of Section~\ref{section-proof}.
In particular, from now on $nd \leq m$ and $Z$ is the branch divisor of $f: X \to Y$.

Here is a key observation about the rotation functors:

\begin{proposition}
\label{proposition-intertwining}
The functor $\Phi_0:\cA_Z \to \cA_X^{\bmu_n}$ defined by~\eqref{equation-phi-k} intertwines the rotation functors 
$\sO_Z$ and $\sO_X^{\bmu_n}$, i.e.
\begin{equation}
\label{equation-intertwining}
\sO_X^{\bmu_n} \circ \Phi_0 \cong \Phi_0 \circ \sO_Z.
\end{equation} 
\end{proposition}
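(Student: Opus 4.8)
The plan is to unwind the definition of $\Phi_0$ and track how the two rotation functors interact with the mutations and twists comprising it. Recall from Proposition~\ref{proposition-simpler-Phi} that
\begin{equation*}
\Phi_0 \cong \rL_{\cB_X^0([0,d-1])} \circ j_{0*} \circ \Tnsr{\cO_Z(d)},
\end{equation*}
while $\sO_Z = \rL_{\cB_Z} \circ \Tnsr{\cO_Z(1)}$ (restricted to $\cA_Z$) and $\sO_X^{\bmu_n} = \rL_{\cB_X^{\bmu_n}} \circ \Tnsr{\cO_X(1)}$ (restricted to $\cA_X^{\bmu_n}$), where $\cB_X^{\bmu_n} = \cB_X^{[0,n-1]}$. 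So~\eqref{equation-intertwining} amounts to showing
\begin{equation*}
\rL_{\cB_X^{[0,n-1]}} \circ \Tnsr{\cO_X(1)} \circ \rL_{\cB_X^0([0,d-1])} \circ j_{0*} \circ \Tnsr{\cO_Z(d)}
\cong
\rL_{\cB_X^0([0,d-1])} \circ j_{0*} \circ \Tnsr{\cO_Z(d)} \circ \rL_{\cB_Z} \circ \Tnsr{\cO_Z(1)}.
\end{equation*}

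The key steps, in order: First, move the twist $\Tnsr{\cO_X(1)}$ inward using Lemma~\ref{lemma-mutation-autoequivalence}, which gives $\Tnsr{\cO_X(1)} \circ \rL_{\cB_X^0([0,d-1])} \cong \rL_{\cB_X^0([1,d])} \circ \Tnsr{\cO_X(1)}$; combined with the identity $\Tnsr{\cO_X(1)} \circ j_{0*} \cong j_{0*} \circ \Tnsr{\cO_Z(1)}$ (projection formula for the twist by the pullback line bundle) and $\Tnsr{\cO_Z(1)} \circ \Tnsr{\cO_Z(d)} \cong \Tnsr{\cO_Z(d)} \circ \Tnsr{\cO_Z(1)}$, the left side becomes $\rL_{\cB_X^{[0,n-1]}} \circ \rL_{\cB_X^0([1,d])} \circ j_{0*} \circ \Tnsr{\cO_Z(d)} \circ \Tnsr{\cO_Z(1)}$. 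Second, on the right side, use that $j_{0*} \circ \rL_{\cB_Z} \cong \rL_{j_{0*}\cB_Z} \circ j_{0*}$ on the relevant subcategory (this needs that $j_{0*}$ is fully faithful, which is Theorem~\ref{theorem-cyclic-cover-sod}, together with an argument that the mutation triangle for $\rL_{\cB_Z}$ pushes forward to a mutation triangle — the pushforward of the defining triangle $i^*\beta\beta^! i_* \cF \to \cF \to \rL_{\cB_Z}(\cF)$ has its first term in $j_{0*}\cB_Z$ after suitable identification, so the key point is semiorthogonality of the pair $(\cB_Z, \rL_{\cB_Z}(\cF))$ pushed forward). Then commute $\Tnsr{\cO_Z(d)}$ past $\rL_{\cB_Z}$ via Lemma~\ref{lemma-mutation-autoequivalence}. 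After these manipulations, both sides are expressed as a composition of mutation functors applied to $j_{0*} \circ \Tnsr{\cO_Z(d+1)}$, and the content reduces to an equality of subcategories through which we mutate.

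Third — and this is where the real work lies — I expect the equality to reduce to the claim that $\rL_{\cB_X^{[0,n-1]}} \circ \rL_{\cB_X^0([1,d])} \cong \rL_{\cB_X^{j_{0*}\cB_Z}} \circ \rL_{\cB_X^0([0,d-1])}$ as functors on $j_{0*}\cA_Z(d+1)$, or something close to it. The strategy for this is to note that both sides are left mutations through categories generated by certain copies of $\cB$ (and one copy of $j_{0*}\cB_Z$), and to use the mutation lemmas — Lemma~\ref{lemma-mutation-2} in particular, which converts $j_{0*}\cB_Z(t)$ into $\cB_X^1(t-d)$ under left mutation through $\cB_X^0(t)$, and Lemma~\ref{lemma-AZ-BX-orthogonal}, which says many of these mutations act trivially on $j_{0*}\cA_Z(d)$ — to rewrite each composite mutation functor as a mutation through one and the same subcategory $\cB_X^{[0,n-1]}([0,M-1])$ (up to the orthogonality that kills the extraneous pieces). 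Concretely: inserting semiorthogonal decompositions, one shows $\langle \cB_X^0([1,d]), \cB_X^{[0,n-1]} \rangle$ and $\langle j_{0*}\cB_Z, \cB_X^0([0,d-1]) \rangle$, after mutating the copies of $j_{0*}\cB_Z$ into twists of $\cB_X$ via Lemma~\ref{lemma-mutation-2}, generate the same subcategory of $\Db(X)^{\bmu_n}$ relative to where $j_{0*}\cA_Z(d)$ lives. The main obstacle will be carefully bookkeeping the twist ranges and weight shifts so that the mutated categories match exactly, and verifying that all the intermediate mutation triangles are genuine mutation triangles (i.e., checking the requisite semiorthogonalities, all of which are consequences of~\eqref{sojj}--\eqref{sofj}, Lemma~\ref{lemma-dbz}, and the decomposition~\eqref{new-proof-sod-0}). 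I would organize this final step as a short induction or as a direct appeal to Lemma~\ref{lemma-simplify-C-k}-style reasoning, since the combinatorics there is essentially the same.
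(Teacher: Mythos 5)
Your step 1 is exactly the right (and the paper's) opening move, and your step 2 observation that $j_{0*}$ intertwines $\rL_{\cB_Z}$ with $\rL_{j_{0*}\cB_Z}$ is correct when $nd<m$ (since $j_0^*j_{0*}=\id$, the pushed-forward mutation triangle is again a mutation triangle). The gap is in step 3, which is where the real content lies. The two composites you want to identify are not mutations through one and the same subcategory: $\langle \cB_X^{[0,n-1]},\cB_X^0([1,d])\rangle$ involves $n+d$ copies of $\cB$, while $\langle \cB_X^0([0,d-1]),j_{0*}\cB_Z(d)\rangle$ involves $d$ copies of $\cB$ and one copy of $\cB_Z$, and Lemma~\ref{lemma-mutation-2} does not convert the latter into the former: it only computes $\rL_{\cB_X^{\ell}(s)}(j_{k*}\cB_Z(t))$ for $k\neq\ell$, for $t<s<t+M-d$, or for $t=s$, whereas your rewriting would require mutating $j_{0*}\cB_Z(d)$ through $\cB_X^0(s)$ with $s<d$, which is not covered and does not simplify (the relevant Hom's, e.g.\ $\Hom(\cB_Z(s),\cB_Z(d))$ for $s<d$, do not vanish). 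The conversion $j_{0*}\cB_Z(d)\rightsquigarrow\cB_X^1(0)$ requires mutating through $\cB_X^0(d)$, which is \emph{not} orthogonal to $j_{0*}\cA_Z(d+1)$, so the extraneous piece cannot be discarded by orthogonality bookkeeping. Indeed, $\rL_{\cB_X^0(d)}\circ j_{0*}$ and $j_{0*}\circ\rL_{\cB_Z(d)}$ are genuinely non-isomorphic: the paper's proof shows (using the resolution~\eqref{equation-resolution-z} applied to $\cG=\beta\beta^!i_*\cF$) that the comparison map between them has cone lying in $\cB_X^1$, a nonzero weight-one discrepancy. The whole point of the argument is that one first inserts the extra mutation $\rL_{\cB_X^{[1,n-1]}}$ into $\Phi_0$ (harmless by Lemma~\ref{lemma-AZ-BX-orthogonal}) and then observes that this weight-shifted mutation kills precisely that discrepancy. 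Nothing in your sketch produces this mechanism, and a ``clean commutation up to bookkeeping'' of the kind you describe is false.

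Two further problems. First, your argument treats $\cB_Z$ and $j_{0*}\cB_Z$ as admissible subcategories, which fails in the boundary case $nd=m$ (covered by the theorem and occurring in the main applications, e.g.\ quartic double solids): there $i^*$ is not fully faithful on $\cB$, $\cA_Z=\Db(Z)$, and $\rL_{\cB_Z}$ is only defined as a cone of Fourier--Mukai kernels as in~\eqref{equation-spherical-twist}, so $\rL_{j_{0*}\cB_Z}$ does not exist; relatedly, the pair $(\cB_X^{\bmu_n},\cB_X^0([1,d]))$ is not semiorthogonal when $nd=m$, so your left-hand composite is not the mutation through the generated subcategory, which undermines the ``equality of subcategories'' framing. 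Second, the statement is an isomorphism of functors, not merely an objectwise isomorphism, so one must construct a natural comparison morphism; the paper does this at the level of Fourier--Mukai kernels (the dashed arrow in its diagram~\eqref{diagram-intertwining}), and your outline does not address where such a natural transformation comes from.
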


\begin{proof}
We start by rewriting both sides of~\eqref{equation-intertwining}. For the left side, we have
\begin{align*}
\sO_X^{\bmu_n} \circ \Phi_0 & \cong 
\rL_{\cB_X^{\bmu_n}} \circ \Tnsr{\cO_X(1)} \circ \rL_{\cB_X^0([0,d-1])} \circ j_{0*} \circ \Tnsr{\cO_Z(d)} \\
& \cong \rL_{\cB_X^{\bmu_n}} \circ \rL_{\cB_X^0([1,d])} \circ j_{0*} \circ \Tnsr{\cO_Z(d+1)}.
\end{align*}
The first isomorphism holds by the definition of $\sO_X^{\bmu_n}$ and 
Proposition~\ref{proposition-simpler-Phi}, and the second by 
Lemma~\ref{lemma-mutation-autoequivalence} and the projection formula.
Recall that $\cB_X^{\bmu_n} = \cB_X^{[0,n-1]}$ and
note that $(\cB_X^{\bmu_n}, \cB_X^0([1,d-1]))$ is a semiorthogonal pair 
by~\eqref{new-proof-sod-0} and our assumption $nd \leq m$. 
(We caution the reader that the pair $(\cB_X^{\bmu_n}, \cB_X^0([1,d]))$ is 
not semiorthogonal if $m = nd$.)
The action of $\bmu_n$ on $\cB_X([0,d-1])$ is trivial by Lemma~\ref{faithful-on-bd}, 
so by the complete orthogonality in Proposition~\ref{proposition-trivial-action-sod} 
there is a decomposition 
\begin{equation*}
\langle \cB_X^{\bmu_n}, \cB_X^0([1,d-1]) \rangle = \langle \cB_X^0([0,d-1]), \cB_X^{[1,n-1]} \rangle.
\end{equation*}
Hence by~\eqref{left-mutation-composition} we have
\begin{equation*}
\rL_{\cB_X^{\bmu_n}} \circ \rL_{\cB_X^0([1,d])} \cong 
\rL_{\langle \cB_X^{\bmu_n}, \cB_X^0([1,d-1]) \rangle} \circ \rL_{\cB_X^0(d)} \\
\cong \rL_{\cB_X^0([0,d-1])} \circ \rL_{\cB_X^{[1,n-1]}} \circ \rL_{\cB_X^0(d)}.
\end{equation*}
Combining this with the above, we have
\begin{equation}
\label{intertwining-LHS}
\sO_X^{\bmu_n} \circ \Phi_0 \cong 
\rL_{\cB_X^0([0,d-1])} \circ \rL_{\cB_X^{[1,n-1]}} \circ \rL_{\cB_X^0(d)} 
\circ  j_{0*} \circ \Tnsr{\cO_Z(d+1)}.
\end{equation}

Now we consider the right side of~\eqref{equation-intertwining}. First we note 
\begin{equation*}
\Phi_0 \cong \rL_{\cB_X^0([0,d-1])} \circ  \rL_{\cB_X^{[1,n-1]}} \circ j_{0*} \circ \Tnsr{\cO_Z(d)}.
\end{equation*}
Indeed, $\rL_{\cB_X^{[1,n-1]}}$ is the identity on $j_{0*}\cA_Z(d)$ by 
Lemma~\ref{lemma-AZ-BX-orthogonal}, so this is equivalent to the isomorphism of
Proposition~\ref{proposition-simpler-Phi}. Using this and Lemma~\ref{lemma-mutation-autoequivalence}, 
we find
\begin{equation}
\label{intertwining-RHS}
\Phi_0 \circ \sO_Z \cong \rL_{\cB_X^0([0,d-1])} \circ  \rL_{\cB_X^{[1,n-1]}} \circ j_{0*} \circ \rL_{\cB_Z(d)} \circ \Tnsr{\cO_Z(d+1)}.
\end{equation}

To prove the proposition, by~\eqref{intertwining-LHS} and~\eqref{intertwining-RHS} it suffices 
to construct a morphism of functors 
\begin{equation*}
\rL_{\cB_X^0(d)} \circ  j_{0*} \to j_{0*} \circ \rL_{\cB_Z(d)}
\end{equation*}
whose composition with $\rL_{\cB_X^{[1,n-1]}}$ is an isomorphism. 
By Lemma~\ref{lemma-mutation-autoequivalence}, this is equivalent to constructing a morphism
\begin{equation*}
\rL_{\cB_X^0} \circ  j_{0*} \to j_{0*} \circ \rL_{\cB_Z}
\end{equation*}
whose composition with $\rL_{\cB_X^{[1,n-1]}(-d)}$ is an isomorphism. 

For this, consider the commutative diagram of functors 
\begin{equation}
\label{diagram-intertwining}
\vcenter{\xymatrix{
f_0^*\beta\beta^!f_{0*}j_{0*}  \ar[r] \ar[d] & j_{0*}  \ar[r] \ar@{=}[d] & \rL_{\cB_X^0} \circ j_{0*} \ar@{-->}[d] \\
j_{0*}i^*\beta\beta^!i_* \ar[r] & j_{0*} \ar[r] & j_{0*} \circ \rL_{\cB_Z}
}}
\end{equation}
Here the two rows come from the definition of the mutation functors 
(or from~\eqref{equation-spherical-twist} in case $m = nd$), 
and the left vertical arrow is induced by the isomorphism $f_{0*}j_{0*} \cong i_*$
and the morphism $f_0^* \to j_{0*}j_0^*f_0^* \cong j_{0*}i^*$ obtained from the unit of the adjunction 
between $j_0^*$ and~$j_{0*}$. It is easy to check the left square commutes. 
All of the functors in the diagram are Fourier--Mukai functors and the arrows in the 
diagram come from morphisms of kernels. In the corresponding diagram of 
Fourier--Mukai kernels we can find a dashed arrow making the diagram commute, 
and this induces the dashed arrow in the above diagram.

Now we describe the cone of the morphism $\rL_{\cB_X^0} \circ  j_{0*} \to j_{0*} \circ \rL_{\cB_Z}$ 
applied to an object $\cF \in \Db(Z)$. Set $\cG = \beta\beta^!i_*\cF$ so that $\cG \in \cB$.
Tensoring~\eqref{equation-resolution-z} with $f_0^*\cG$, we see 
the left column of diagram~\eqref{diagram-intertwining} applied to $\cF$ 
fits into a distinguished triangle
\begin{equation*}
f_0^*\cG(-d)\otimes\chi \to f_0^*\cG \to j_{0*}i^*\cG.
\end{equation*}
By the octahedral axiom, diagram~\eqref{diagram-intertwining} applied to $\cF$ thus 
gives a distinguished triangle
\begin{equation*}
f_0^*\cG(-d)\otimes\chi[1] \to \rL_{\cB_X^0}(j_{0*}\cF) \to j_{0*}\rL_{\cB_Z}(\cF).
\end{equation*}
The first vertex is contained in $\cB_X^1(-d)$, hence is killed by $\rL_{\cB_X^{[1,n-1]}(-d)}$. 
This proves the composition of $\rL_{\cB_X^0} \circ  j_{0*} \to j_{0*} \circ \rL_{\cB_Z}$ with 
$\rL_{\cB_X^{[1,n-1]}(-d)}$ is an isomorphism, as required.
\end{proof}

\subsection{The involution for $n = 2$}

As we observed in Corollary~\ref{main-corollary-precise}, if $n = 2$ there is an 
involution $\tau: \cA_Z \to \cA_Z$ such that $\cA_X \simeq \cA_Z^{\bZ/2}$, where 
$\bZ/2$ acts by $\tau$. Now we can give a simple formula for $\tau$ in terms of the 
rotation functor for $\cA_Z$.

\begin{proposition}
\label{proposition-tau}
If $n=2$ then 
\begin{equation}
\tau \cong \sO_Z^d[-1]
\end{equation}
is an involution of $\cA_Z$ such that $\cA_X \simeq \cA_Z^{\bZ/2}$, where 
$\bZ/2$ acts by $\tau$.
\end{proposition}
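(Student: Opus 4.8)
The plan is to deduce the statement from the combination of Corollary~\ref{main-corollary-precise}, Proposition~\ref{proposition-intertwining}, and Theorem~\ref{theorem-power-O}. Recall that Corollary~\ref{main-corollary-precise} already asserts that $\tau = \Phi_0^{-1} \circ \Tnsr{\chi} \circ \Phi_0$ is a $\bZ/2$-action on $\cA_Z$ with $\cA_X \simeq \cA_Z^{\bZ/2}$. So the only new content to establish is the identification $\tau \cong \sO_Z^d[-1]$.

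The first step is to apply Proposition~\ref{proposition-intertwining} in the case $n = 2$: it gives $\sO_X^{\bmu_2} \circ \Phi_0 \cong \Phi_0 \circ \sO_Z$, hence by iterating $d$ times, $(\sO_X^{\bmu_2})^d \circ \Phi_0 \cong \Phi_0 \circ \sO_Z^d$. The second step is to invoke Theorem~\ref{theorem-power-O}, which in the $n = 2$ case gives $(\sO_X^{\bmu_2})^d \cong \Tnsr{\chi}[1]$ as an autoequivalence of $\cA_X^{\bmu_2}$. Combining these two isomorphisms yields
\begin{equation*}
\Tnsr{\chi}[1] \circ \Phi_0 \cong \Phi_0 \circ \sO_Z^d,
\end{equation*}
which, since $\Phi_0$ is fully faithful with essential image all of $\cA_X^{\bmu_2}$ (that is, $\Phi_0$ is an equivalence $\cA_Z \xrightarrow{\sim} \cA_X^{\bmu_2}$ when $n=2$, by Theorem~\ref{main-theorem-precise}), can be rewritten after composing with $\Phi_0^{-1}$ on the left and $[-1]$ on the right as
\begin{equation*}
\Phi_0^{-1} \circ \Tnsr{\chi} \circ \Phi_0 \cong \sO_Z^d[-1].
\end{equation*}
The left-hand side is precisely $\tau$, so $\tau \cong \sO_Z^d[-1]$.

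For the final assertion, that $\tau$ is an involution and $\cA_X \simeq \cA_Z^{\bZ/2}$ with $\bZ/2$ acting by $\tau$: this is exactly the content of Corollary~\ref{main-corollary-precise}, now that $\tau$ has been identified with $\sO_Z^d[-1]$. (One can double-check consistency: by~\eqref{power-O_Z} with $e = nd = 2d$ we have $\sO_Z^{2d} \cong [2]$, so $\tau^2 \cong \sO_Z^{2d}[-2] \cong \id$, confirming directly that $\tau$ squares to the identity.)

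The only genuinely delicate point is the bookkeeping in the first step — keeping track of the twist by $\chi$ and the shift $[1]$ through the $d$-fold iteration of the intertwining isomorphism — but this is routine once Proposition~\ref{proposition-intertwining} and Theorem~\ref{theorem-power-O} are in hand; there is no substantive obstacle.
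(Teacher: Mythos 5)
Your proposal is correct and follows essentially the same route as the paper: iterate the intertwining isomorphism of Proposition~\ref{proposition-intertwining} $d$ times, substitute $(\sO_X^{\bmu_2})^d \cong \Tnsr{\chi}[1]$ from Theorem~\ref{theorem-power-O}, and conjugate by the equivalence $\Phi_0$ to identify $\tau \cong \sO_Z^d[-1]$, with the involution and equivariance statements supplied by Corollary~\ref{main-corollary-precise}. The consistency check $\tau^2 \cong \id$ via $\sO_Z^{2d} \cong [2]$ matches the remark in the paper.
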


\begin{remark}
The proposition is consistent with the isomorphism $\sO_Z^{2d} \cong [2]$ given by Theorem~\ref{theorem-power-O}.
\end{remark}

\begin{proof}
This follows by combining Proposition~\ref{proposition-intertwining} with Theorem~\ref{theorem-power-O}.
Indeed, applying $d$ times the intertwining property~\eqref{equation-intertwining} we get an isomorphism
\begin{equation*}
(\sO_X^{\bmu_2})^d \circ \Phi_0 \cong \Phi_0 \circ \sO_Z^d.
\end{equation*}
Now $(\sO_X^{\bmu_2})^d \cong \Tnsr{\chi}[1]$ by Theorem~\ref{theorem-power-O}, so 
we have
\begin{equation*}
\Tnsr{\chi} \circ \Phi_0 \cong \Phi_0 \circ \sO_Z^d[-1].
\end{equation*}
Since $\tau = \Phi_0^{-1}\circ \Tnsr{\chi} \circ \Phi_0$ by~\eqref{equation-tau}, the result follows.
\end{proof}

\subsection{Reconstruction for $n>2$}
\label{subsection-reconstruction-n-geq-3}
As we already mentioned, the reconstruction of $\cA_X$ from $\cA_Z$ is 
more involved when $n > 2$.
First, Theorem~\ref{main-theorem-precise} gives a semiorthogonal decomposition
\begin{equation*}
\cA_X^{\bmu_n} = \langle \Phi_0(\cA_Z), \Phi_1(\cA_Z), \dots, \Phi_{n-2}(\cA_Z) \rangle.
\end{equation*}
In~\cite[Section~4]{kuznetsov-lunts} it is explained that given a semiorthogonal decomposition 
$\cT = \langle \cT_1, \cT_2 \rangle$, the category $\cT$ can be constructed as a ``gluing'' 
of the categories $\cT_1$ and $\cT_2$ along the ``gluing functor'' 
$i_1^! i_2[1]: \cT_2 \to \cT_1$, where $i_p: \cT_p \to \cT$ are the embeddings. 
(Technically, we should assume $\cT$ has a DG enhancement and work with the DG 
version of the gluing functor,
but in case $\cT_1$ and $\cT_2$ are admissible components of the derived category of a variety
and the gluing functor is of Fourier--Mukai type as in our situation, such a DG enhancement automatically exists).
In our case, it follows that $\cA_X^{\bmu_n}$ can be constructed as a gluing of $n-1$ 
copies of $\cA_Z$. In fact, one can check that for each adjacent pair of components in the 
above decomposition, the gluing functor $\cA_Z \to \cA_Z$ is given by 
the $d$-th power $\sO_Z^d$ of the rotation functor. 
The category glued from the $n-1$ copies of $\cA_Z$ carries an action of $\hbmu_n$ 
(induced by the action on $\cA_X^{\bmu_n}$), and it follows from Corollary~\ref{proposition-elagin} 
that $\cA_X$ is equivalent to the equivariant category for this action. Hence, in principle, 
$\cA_X$ can be entirely reconstructed from the category $\cA_Z$.

However, it is difficult to make this result explicit, because it is difficult to identify explicitly 
the action of $\hbmu_n$ on the glued category. Ideally, we would have a direct description of 
$\cA_X$ in terms of $\cA_Z$ and the rotation functor $\sO_Z$ (as we have when $n = 2$). 
In case $n = 3$, we propose to consider the category of distinguished triangles in $\cA_Z$ of the form
\begin{equation}
\label{triangle-category}
\cF_0 \to \sO_Z^{-d}[1](\cF_1) \to \sO_Z^{-2d}[2](\cF_2) \to \sO_Z^{-3d}[3](\cF_0).
\end{equation}
Note that $\sO_Z^{-3d} \cong [-2]$ by Theorem~\ref{theorem-power-O}, so 
indeed $\sO_Z^{-3d}[3](\cF_0) \cong \cF_0[1]$. Moreover, there is an action of 
$\bZ/3$ on the above category of triangles, where the generator acts by 
sending~\eqref{triangle-category} to the triangle 
\begin{equation*}
\cF_1 \to \sO_Z^{-d}[1](\cF_2) \to \sO_Z^{-2d}[2](\cF_0) \to \sO_Z^{-3d}[3](\cF_1)
\end{equation*}
obtained by applying $\sO_Z^{d}[-1]$ and rotating.
We think a natural guess is that the category $\cA_X$ is equivalent to the 
$\bZ/3$-equivariant category of the above category of triangles. 
Note that, a priori, it is not even clear the category of triangles is triangulated.


\section{Applications}
\label{section-applications}

The main results of this paper can be applied to a cyclic cover of any variety 
having a rectangular Lefschetz decomposition of its derived category. 
Quite a number of such varieties are known --- 
projective spaces (more generally projective bundles), 
Grassmannians $\G(k,n)$ for $k$ and $n$ coprime \cite{fonarev2013minimal} (and some of their linear sections), 
and some others (see~\cite[Section~4.1]{kuznetsov2015calabi} and~\cite{kuznetsov2014semiorthogonal}). 
If we consider more generally Deligne-Mumford stacks 
(see Remark~\ref{remark-stacks}), there are other natural examples, 
e.g.\ weighted projective spaces (regarded as quotient stacks).
Here we discuss only several examples of cyclic covers of varieties in the above list ---
quartic double solids, special Gushel--Mukai varieties, and cyclic cubic hypersurfaces.

\subsection{Quartic double solids}
Let $Y = \bP^3$ and let $f: X \to Y$ be a quartic double solid, i.e. a double cover of $Y$ ramified 
over a quartic surface $Z \in |\cO_Y(4)|$.
We have the standard semiorthogonal decomposition
\begin{equation*}
\Db(Y) = \langle \cO_Y, \cO_Y(1), \cO_Y(2), \cO_Y(3) \rangle.
\end{equation*}
Hence we are in the situation of Theorem~\ref{main-theorem} with $\cB = \langle \cO_Y \rangle$, 
$m = 4$, and $n = d = 2$.
The semiorthogonal decompositions~\eqref{equation-dbx} and~\eqref{equation-dbz} for $t=0$ 
are in this case 
\begin{align}
\nonumber
\Db(X) = \langle \cA_X, \cO_X, \cO_X(1) \rangle
& \qquad\text{and}\qquad
 \cA_Z = \Db(Z). 
\intertext{We note the Serre functor of $\cA_X$ satisfies 
$\rS_{\cA_X} \cong \sigma [2]$, where $\sigma$ is the involution generating 
the $\bmu_2$-action on $\cA_X$ (see~\cite[Corollary~4.6]{kuznetsov2015calabi}).
Applying Theorem~\ref{main-theorem}
and Corollary~\ref{main-corollary}, we conclude}
\label{quartic-double-solid-equivalences}
\cA_X^{\bmu_2} \simeq \Db(Z)
& \qquad\text{and}\qquad
\cA_X \simeq \Db(Z)^{\bZ/2}.
\end{align}
Here, by Proposition~\ref{proposition-tau} the group $\bZ/2$ acts on $\Db(Z)$ by $\sO_Z^2[-1]$, 
where $\sO_Z$ is the composition of the twist by $\cO_Z(1)$ with the spherical twist with respect to $\cO_Z$. 

This is exactly analogous to the relationship between the derived categories of an 
Enriques surface $S$ and its associated K3 surface $T$. Namely, there are equivalences 
\begin{equation*}
\Db(S)^{\bmu_2} \simeq \Db(T)
\qquad \text{and} \qquad 
\Db(S) \simeq \Db(T)^{\bZ/2} ,
\end{equation*}
where $\bmu_2$ acts on $\Db(S)$ by the shift of the Serre functor $\rS_{\Db(S)}[-2]$, i.e. by tensoring with~$\omega_S$, and 
$\bZ/2$ acts on $\Db(T)$ by the covering involution. 
Thus~\eqref{quartic-double-solid-equivalences} can be thought of as saying $\cA_X$ is a 
``noncommutative Enriques surface'' obtained by taking the quotient of the K3 surface~$Z$ 
by an involutive autoequivalence (which can be thought of as a ``noncommutative automorphism'').
This complements the results of~\cite{ingalls2010nodal}, where it is shown that if $X^{+}$ is 
a small resolution of singularities of an Artin--Mumford quartic double solid, then the 
distinguished component $\cA_X^{+} \subset \Db(X^{+})$ is equivalent to the distinguished component 
$\cA_S \subset \Db(S)$ of an associated Enriques surface $S$.

\subsection{Gushel--Mukai varieties}

Next we apply our results to the following class of varieties.
\begin{definition}
\label{definition-GM}
A smooth projective variety $X$ of dimension $N \geq 2$ is a 
\emph{Gushel--Mukai (GM) variety} if either: 
\begin{itemize}
\item $X$ is Fano with
\begin{equation*}  
\Pic(X) \cong \bZ,
\qquad
-K_X = (N - 2)H,
\qquad\text{and}\qquad
H^{N} = 10,
\end{equation*}
where $H$ is the ample generator of $\Pic(X)$; or
\item $X$ is a Brill--Noether general polarized K3 surface of degree $10$.
\end{itemize}
\end{definition}

We do not recall here the definition of a Brill--Noether general polarized K3, 
as below we focus on the Fano case. 
We note there is a more general definition of GM varieties, which includes 
singular varieties and curves, and coincides with the above definition 
for smooth varieties of dimension $\geq 2$. 
See~\cite{debarre-kuznetsov} for this and a detailed discussion of the geometry of GM varieties.

Let $V$ be a $5$-dimensional vector space. 
Let $\G(2,V)$ be the Grassmannian of $2$-dimensional subspaces of~$V$, 
embedded in $\bP(\bigwedge^2V) = \bP^9$ via the Pl\"{u}cker embedding.
The following theorem gives the classification of smooth GM varieties of dimension $\geq 2$, 
originally due to Gushel~\cite{gushel1983fano} and Mukai~\cite{mukai}.

\begin{theorem}[\cite{gushel1983fano, mukai, debarre-kuznetsov}]
\label{theorem-mukai}
Let $X$ be a smooth GM variety of dimension $N \geq 2$. 
Then there is a morphism 
\begin{equation*}
f: X \to \G(2,V)
\end{equation*}
such that precisely one of the following hold:
\begin{enumerate}
\item[(a)]  We have $2 \le N \le 5$ and there is a linear subspace $P \cong \bP^{N+4} \subset \bP(\bigwedge^2V)$ 
and a quadric hypersurface $Q \subset \bP(\bigwedge^2V)$ such that $f: X \to \G(2,V)$ embeds $X$ 
as a smooth complete intersection $X = \G(2,V) \cap P \cap Q$.

\item[(b)] 
We have $2 \le N \le 6$ and there is a linear subspace $P \cong \bP^{N+3} \subset \bP(\bigwedge^2V)$ 
and a quadric hypersurface $Q \subset \bP(\bigwedge^2V)$ such that $Y = \G(2,V) \cap P$ and 
$Z = Y \cap Q$ are smooth complete intersections, the image of $f$ is $Y$, and $f: X \to Y$ 
is the double cover of $Y$ ramified over $Z$.
\end{enumerate}
Conversely, any variety as in \textnormal{(a)} or \textnormal{(b)} is an $N$-dimensional GM variety. 
\end{theorem}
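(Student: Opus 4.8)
The plan is to deduce the statement from Mukai's classification of Fano manifolds of coindex $3$ and degree $10$ (equivalently, of prime Fano threefolds of genus $6$ in dimension $N=3$), together with Lefschetz-type arguments to pass between dimensions; the surface case $N=2$ is the classical theory of Brill--Noether general polarized K3 surfaces of genus $6$, and the converse direction is a direct computation. I will outline the Fano case $N \geq 3$. First I would pin down the fundamental linear system: using Riemann--Roch together with Kodaira--Nakano vanishing (note $-K_X = (N-2)H$, so all relevant twists of $\cO_X$ have no higher cohomology), one gets $h^0(X,\cO_X(1)) = N+5$ and $h^i(X,\cO_X(1)) = 0$ for $i>0$; one then shows $\cO_X(1)$ is globally generated, reducing by hyperplane sections to the threefold case handled by Iskovskikh, so that $\cO_X(1)$ defines a morphism $\varphi\colon X \to \bP(H^0(X,\cO_X(1))^\vee)$.

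The heart of the matter is the construction of the \emph{Gushel bundle}: a stable rank $2$ vector bundle $\cU_X$ on $X$ with $\det\cU_X \cong \cO_X(-1)$, with $\cU_X^\vee$ globally generated, $h^0(X,\cU_X^\vee) = 5$, and $h^{>0}(X,\cU_X^\vee) = 0$. For $N=3$ this is precisely Mukai's theorem, where $\cU_X^\vee$ is produced from a suitable $2$-dimensional family of conics (or lines) on $X$ via a relative Serre construction. For $N \geq 4$ I would argue by induction on $N$: a general hyperplane section $X' \subset X$ is again a GM variety of dimension $N-1$ --- it is Fano of coindex $3$ and degree $10$, and the Grothendieck--Lefschetz theorem keeps its Picard number equal to $1$ since $\dim X' \geq 3$ --- so it carries a Gushel bundle $\cU_{X'}$, and a Lefschetz-type extension theorem for vector bundles on Fano varieties of dimension $\geq 3$ produces $\cU_X$ restricting to $\cU_{X'}$, the remaining numerical and cohomological conditions being read off the restriction sequence $0 \to \cU_X(-1) \to \cU_X \to \cU_{X'} \to 0$.

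Granting the Gushel bundle, set $V := H^0(X,\cU_X^\vee)^\vee$, a $5$-dimensional vector space. The surjection $V^\vee \otimes \cO_X \twoheadrightarrow \cU_X^\vee$ dualizes to an inclusion $\cU_X \hookrightarrow V \otimes \cO_X$ of a rank $2$ subbundle, hence defines a morphism $\gamma\colon X \to \G(2,V)$ with $\gamma^*\cO_{\G(2,V)}(1) \cong \det\cU_X^\vee = \cO_X(1)$; composing with the Pl\"ucker embedding recovers $\varphi$. Since this pullback is ample, $\gamma$ is finite onto its image $\overline{X} := \gamma(X) \subseteq \G(2,V)$, which has dimension $N$. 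Comparing $\deg_H X = 10$ with $\deg \G(2,V) = 5$, and analyzing $\overline{X}$ inside the linear section $\G(2,V) \cap \langle\overline{X}\rangle$ using the fact that $\G(2,V)$ is cut out by the Pl\"ucker quadrics, one finds exactly two possibilities: either $\gamma$ is birational --- then, being finite and birational onto its image, it is a closed embedding, $\langle\overline{X}\rangle = P \cong \bP^{N+4}$ (by linear normality, i.e. surjectivity of $\bigwedge^2 V^\vee \to H^0(X,\cO_X(1))$), and $\overline{X}$ is a quadric section of the Fano variety $\G(2,V)\cap P$, which is case (a); or $\gamma$ is generically $2{:}1$, in which case $\overline{X} = \G(2,V)\cap P'$ is a dimensionally transverse linear section with $P' \cong \bP^{N+3}$, and from $\det\cU_X = \cO_X(-1)$ one identifies $\gamma_*\cO_X \cong \cO_{\overline{X}} \oplus \cO_{\overline{X}}(-1)$, exhibiting $X$ as the double cover of $Y := \overline{X}$ branched along the quadric section $Z = Y \cap Q$ --- case (b). For the converse, given $X$ of type (a) or (b) one checks directly that it is Fano with $-K_X = (N-2)H$ (adjunction on $\G(2,V)$, which has index $5$, using $Z \in |\cO_Y(2)|$ in case (b)), that $H^N = 10$ (the degree computations above), and that $\Pic(X) = \bZ H$ for $N \geq 3$ (Grothendieck--Lefschetz); a smooth $2$-dimensional linear section of $\G(2,V)$ is a Brill--Noether general polarized K3 surface of degree $10$.

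I expect the main obstacle to be the construction of the Gushel bundle. At the base of the induction it is Mukai's classification of genus $6$ prime Fano threefolds, a substantial classical result that we invoke wholesale; and in the inductive step the delicate point is controlling the Lefschetz extension of the rank $2$ bundle from a hyperplane section to $X$, which requires vanishing of $H^1$ of a suitable twist of its endomorphism bundle. By comparison, the global generation of $\cO_X(1)$ in the first step, and the birational-versus-double-cover dichotomy in the last step, are relatively formal once the bundle, the numerology $10 = 2\cdot 5$, and linear normality in $\bigwedge^2 V$ are in hand.
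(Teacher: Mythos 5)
This statement is not proved in the paper at all: it is quoted as a known classification result, with the proof delegated to Gushel (for threefolds), Mukai (coindex~3 Fano manifolds), and Debarre--Kuznetsov (the uniform treatment including the K3 case and the ordinary/special dichotomy). So there is no in-paper argument to measure your proposal against; what can be said is that your sketch reconstructs the standard strategy of those references rather than the (nonexistent) proof in this paper.

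As a reconstruction it is essentially faithful: compute $h^0(X,\cO_X(1))=N+5$ and base-point-freeness of the fundamental system, produce the rank-$2$ ``Gushel bundle'' $\cU_X$ with $h^0(\cU_X^\vee)=5$, use it to define the map to $\G(2,V)$, and then split into the birational/embedding case (a) and the generically $2{:}1$ case (b) via $10=\deg(\gamma)\cdot\deg\overline{X}$ together with linear normality and the fact that $\G(2,V)$ is cut out by quadrics; the converse is adjunction plus Grothendieck--Lefschetz. Two caveats. First, everything genuinely difficult is deferred exactly where you say it is: Mukai's existence theorem for the bundle in dimension $3$ (which, incidentally, is obtained by extending a rigid stable bundle from a Brill--Noether general K3 hyperplane section rather than by a Serre construction on a family of conics --- a minor misattribution since you invoke it as a black box), and the Lefschetz-type extension of $\cU_{X'}$ from a smooth fundamental divisor to $X$, which needs the relevant $H^1$ and $H^2$ vanishing for twists of $\cU_{X'}\otimes\cU_{X'}^\vee$ as well as the nontrivial fact that a smooth member of $|H|$ exists and is again a GM variety. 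Second, the dichotomy step is stated a bit too quickly: one must rule out intermediate covering degrees and degenerate images (using the lower bound on the degree of a nondegenerate $N$-fold in its span), check flatness of $\gamma$ onto $\overline{X}$ before reading off $\gamma_*\cO_X\cong\cO_{\overline X}\oplus\cO_{\overline X}(-1)$, and verify smoothness of $Y$ and of the branch divisor $Z$ in case (b); these points are handled carefully in Debarre--Kuznetsov. With those steps filled in, your outline is the accepted proof; as written it is an honest plan whose hard content is imported wholesale, which is appropriate for a theorem the paper itself only cites.
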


Let $X$ be a smooth GM variety of dimension $N \geq 2$. 
We call the morphism $f: X \to \G(2,V)$ of Theorem~\ref{theorem-mukai} 
the \emph{Gushel map} of $X$. 
We say $X$ is \emph{ordinary} if Theorem \ref{theorem-mukai}(a) holds, 
and \emph{special} if Theorem \ref{theorem-mukai}(b) holds. 
If $X$ is special, we often regard the Gushel map as a morphism $f: X \to Y$, where $Y$ is 
as in Theorem \ref{theorem-mukai}(b). 

From now on, assume $X$ is special and $N \geq 3$.
It follows from Theorem \ref{theorem-mukai} that 
the target $Y$ of the Gushel map $f:X \to Y$ is an $N$-dimensional linear section of the 
Grassmannian $\G(2,V)$, and the branch locus $Z \subset Y$ is an ordinary GM $(N-1)$-fold.
By~\cite[Section 6.1]{kuznetsov2006hyperplane}, we have a semiorthogonal decomposition
\begin{equation*}
\Db(Y) = \langle \cO_Y,\cU_Y^*,\dots,\cO_Y(N-2),\cU_Y^*(N-2) \rangle,
\end{equation*}
where $\cU_Y$ is the restriction to $Y$ of the tautological rank 2 bundle on $\G(2,V)$. 
We set ${\cB = \langle \cO_Y, \cU_Y^* \rangle}$, so that $\Db(Y)$ has a rectangular 
Lefschetz decomposition 
\begin{equation*}
\Db(Y) = \langle \cB, \cB(1), \dots, \cB(N-2) \rangle
\end{equation*}
of length $m = N-1$.
Since $X$ is a double cover of $Y$ ramified over a quadratic divisor, we are in the situation of
Theorem~\ref{main-theorem} with $n = 2$ and $d = 1$. Thus, we have 
decompositions
\begin{align}
\nonumber \Db(X) = \langle \cA_X, \cB_X, \dots, \cB_X(N-3) \rangle
& \qquad\text{and}\qquad
\Db(Z) = \langle \cA_Z, \cB_Z, \dots, \cB_Z(N-4) \rangle, 
\intertext{and equivalences}
\label{GM-equivalences} \cA_X^{\bmu_2} \simeq \cA_Z  
& \qquad\text{and}\qquad
\cA_X \simeq \cA_Z^{\bZ/2}.
\end{align}
Here, by Proposition~\ref{proposition-tau} the group $\bZ/2$ acts on $\cA_Z$ by $\sO_Z[-1]$.

An interesting feature of this example is that the categories $\cA_X$ and $\cA_Z$ are 
the distinguished components of GM varieties of dimensions differing by one. 
As is discussed in~\cite{X10-derived}, according to whether the dimension of a 
GM variety is even or odd, its distinguished component is a ``noncommutative K3 surface'' 
or a ``noncommutative Enriques surface'' 
(at the level of Serre functors this follows from \cite[Corollary~3.8]{kuznetsov2015calabi}).
Hence, the equivalences~\eqref{GM-equivalences} can be interpreted in the same way 
as~\eqref{quartic-double-solid-equivalences}, except now the K3 is also ``noncommutative'' 
in general.

\subsection{Cyclic hypersurfaces}
We say a hypersurface $X \subset \bP^N = \bP(V)$ of degree $n$ is \emph{cyclic} 
if it is invariant under the action of $\bmu_n$ induced by a representation 
of $\bmu_n$ on $V$ such that
\begin{equation*}
V \cong (W\otimes \one) \oplus \chi,
\end{equation*}
where $W \subset V$ is a subspace of codimension 1 and $\chi$ is the generator 
of $\hbmu_{n}$.
If we choose $\bmu_n$-equivariant 
coordinates $x_0,\dots,x_N$ on $V$ such that $W$ is given by the equation $x_0 = 0$,
then the equation of such a hypersurface has the form
\begin{equation*}
F(x_0,x_1,\dots,x_N) = x_0^n - G(x_1,\dots,x_N).
\end{equation*}
Consequently, $X$ is a cyclic covering of $Y = \bP(W)$ of degree $n$ ramified
over a hypersurface $Z \subset \bP(W)$ of degree~$n$ with equation $G = 0$.
Since $Y = \bP(W)$ admits a rectangular Lefschetz decomposition of its 
derived category
\begin{equation*}
\Db(Y) = \langle \cO_Y,\cO_Y(1),\dots,\cO_Y(N-1) \rangle,
\end{equation*}
we can apply our results with $\cB = \langle \cO_Y \rangle$, $m = N$, $d = 1$, 
and $n$ the degree of the hypersurface.
For $n \leq N$, we obtain semiorthogonal decompositions
\begin{align*}
\Db(X) &= \langle \cA_X, \cO_X,\dots,\cO_X(N-n) \rangle, \\
\Db(Z) &= \langle \cA_Z, \cO_Z,\dots,\cO_Z(N-n-1) \rangle,
\end{align*}
and a semiorthogonal decomposition of the equivariant category
\begin{equation*}
\cA_X^{\bmu_n} = \langle \cA_Z, \cA_Z\otimes\chi, \dots, \cA_Z\otimes\chi^{n-2} \rangle.
\end{equation*}
Here, we have used the form of Theorem~\ref{main-theorem} given by 
Corollary~\ref{corollary-Phi_k-Phi_0} and suppressed the embedding functor of 
$\cA_Z$ into $\cA_X^{\bmu_n}$.

Let us see what this gives for smooth cyclic cubic hypersurfaces of small dimension:
\begin{itemize}
\item 
If $X$ is a cyclic cubic surface, then $\cA_X = \langle \cO_X \rangle^\perp \subset \Db(X)$. 
From the description of $X$ as the blow-up of $\bP^2$ in $6$ points, it follows that
$\cA_X$ is generated by an exceptional collection of length $8$.
Further, $Z$ is an elliptic curve and $\cA_Z = \Db(Z)$. We get a decomposition
\begin{equation*}
\cA_X^{\bmu_3} = \langle \Db(Z), \Db(Z) \otimes \chi \rangle.
\end{equation*}
So, we have a category generated by an exceptional collection whose equivariant 
category decomposes into two copies of the derived category of an elliptic curve. 

\item 
If $X$ is a cyclic cubic threefold, then $\cA_X = \langle \cO_X, \cO_X(1) \rangle^\perp \subset \Db(X)$. 
In particular, $\cA_X$ is a fractional Calabi--Yau category of dimension $5/3$ 
(see~\cite[Corollary 4.3]{kuznetsov2004derived} or argue as in Remark~\ref{remark-fractional-CY}).
Further, $Z$ is a cubic surface and $\cA_Z = \langle \cO_Z \rangle^\perp \subset \Db(Z)$. 
In particular, $\cA_Z$ is generated by an exceptional collection of length $8$. 
We get a decomposition
\begin{equation*}
\cA_X^{\bmu_3} = \langle \cA_Z, \cA_Z \otimes \chi \rangle.
\end{equation*}
So, we have a fractional Calabi--Yau category whose equivariant category 
is generated by an exceptional collection of length 16. 
On the other hand,
applying the reconstruction result of Corollary~\ref{proposition-elagin}, we see
the fractional Calabi--Yau category $\cA_X$ can be obtained as the equivariant
category of a category generated by an exceptional collection of length 16.

\item 
If $X$ is a cyclic cubic fourfold, then $\cA_X = \langle \cO_X, \cO_X(1), \cO_X(2) \rangle^\perp \subset \Db(X)$. 
In particular, $\cA_X$ is a K3 category 
(again by~\cite[Corollary 4.3]{kuznetsov2004derived} or Remark~\ref{remark-fractional-CY}).
Further, $Z$ is a cubic threefold and 
${\cA_Z = \langle \cO_Z, \cO_Z(1) \rangle^\perp \subset \Db(Z)}$ is a fractional Calabi--Yau category of dimension $5/3$.
We again get a decomposition 
\begin{equation*}
\cA_X^{\bmu_3} = \langle \cA_Z, \cA_Z \otimes \chi \rangle.
\end{equation*}
So, we have a K3 category whose equivariant category decomposes into 
two copies of a fractional Calabi--Yau category of dimension $5/3$.
On the other hand,
applying the reconstruction result of Corollary~\ref{proposition-elagin}, 
we see the K3 category $\cA_X$ can be obtained as the equivariant
category of a category glued from two copies of the fractional Calabi--Yau category $\cA_Z$.
\end{itemize}

We note that the above construction can be iterated. 
For instance, consider a double cyclic cubic fourfold $X$, i.e.\ 
a cyclic cubic fourfold $X \to \bP^4$ such that the branch locus 
$Z \subset \bP^4$ is itself a cyclic cubic threefold. Concretely, 
in suitable coordinates $X$ is cut out in $\bP^5$ by an 
equation of the form
\begin{equation*}
F(x_0, \dots, x_5) = x_0^3 + x_1^3 - G(x_2,x_3,x_4,x_5).
\end{equation*}
The map $X \to \bP^4$ is given by projection onto the $x_1, \dots, x_5$ 
coordinates, and $Z \subset \bP^4$ is defined by $x_1^3 - G(x_2,x_3,x_4,x_5)$.
The group $\bmu_3 \times \bmu_3$ acts on $X$ by scaling the $x_0$ and $x_1$ 
coordinates, and the induced action on $\Db(X)$ preserves the decomposition 
\begin{equation*}
\Db(X) = \langle \cA_X, \cO_X, \cO_X(1), \cO_X(2) \rangle.
\end{equation*}
It follows from the definitions that there is an equivalence
\begin{equation}
\label{double-cyclic-cubic-sod-1}
\cA_X^{\bmu_3 \times \bmu_3} \simeq (\cA_X^{\bmu_3})^{\bmu_3},
\end{equation}
where on the right side the inner $\bmu_3$ acts 
by scaling on $x_0$ and the outer $\bmu_3$ acts by scaling on $x_1$.
By Theorem~\ref{main-theorem-precise} we have a decomposition
\begin{equation}
\label{double-cyclic-cubic-sod-2}
\cA_X^{\bmu_3} = \langle \Phi_0(\cA_Z), \Phi_1(\cA_Z) \rangle.
\end{equation}
It is straightforward to see the functors $\Phi_0, \Phi_1: \cA_Z \to \cA_X^{\bmu_3}$ 
are equivariant with respect to the $\bmu_3$-action on $\cA_Z$ 
(induced by the cyclic cover structure of $Z$) and the $\bmu_3$-action on
$\cA_X^{\bmu_3}$ described above. Hence, 
by a mild generalization of Elagin's result Theorem~\ref{theorem-group-action-sod}, 
we obtain
\begin{equation}
\label{double-cyclic-cubic-sod-3}
\cA_X^{\bmu_3 \times \bmu_3}  \simeq (\cA_X^{\bmu_3})^{\bmu_3} =
\langle \Phi_0(\cA_Z)^{\bmu_3}, \Phi_1(\cA_Z)^{\bmu_3} \rangle,
\end{equation}
where each component is equivalent to $\cA_Z^{\bmu_3}$. Combined with the description 
of $\cA_Z^{\bmu_3}$ above, we conclude $\cA_X^{\bmu_3 \times \bmu_3}$ is generated 
by an exceptional collection of length $32$.

Finally, we note that it is easy to see a double cyclic cubic fourfold $X$ contains 
a pair of skew planes. The results of~\cite{kuznetsov2010derived} then apply to show 
$\cA_X \simeq \Db(S)$ for a K3 surface $S$. Thus the above gives a description of the 
equivariant derived category of a K3 surface.

\providecommand{\bysame}{\leavevmode\hbox to3em{\hrulefill}\thinspace}
\providecommand{\MR}{\relax\ifhmode\unskip\space\fi MR }
\providecommand{\MRhref}[2]{%
  \href{http://www.ams.org/mathscinet-getitem?mr=#1}{#2}
}
\providecommand{\href}[2]{#2}

\end{document}